\newtheorem{lemma}{Lemma}[section]
\newtheorem{theorem}[lemma]{Theorem}
\newtheorem{remark}[lemma]{Remark}
\newtheorem{corollary}[lemma]{Corollary}
\newtheorem{definition}[lemma]{Definition}
\newtheorem{proposition}[lemma]{Proposition}
\begin{document}
\title{Counting torsion points on subvarieties of the algebraic torus}
\author{Gerold Schefer}
\maketitle

\paragraph{Abstract:}We estimate the growth rate of the function which counts the number of torsion points of order at most $T$ on an algebraic subvariety of the algebraic torus $\mathbb G_m^n$ over some algebraically closed field. We prove a general upper bound which is sharp, and characterize the subvarieties for which the growth rate is maximal. For all other subvarieties there is a better bound which is power saving compared to the general one. Our result includes asymptotic formulas in characteristic zero where we use Laurent's Theorem, the Manin-Mumford Conjecture. However, we also obtain new upper bounds for $K$ the algebraic closure of a finite field.

\section{Introduction}
\paragraph{Main results.} Let $n\ge 1$ be an integer and $K$ be an algebraically closed field. We denote by $\mathbb G_m^d$ the $n$-dimensional algebraic torus with base field $K$. We will identify $\mathbb G_m^n(K)$ with $(K\setminus \{0\})^n$, the group of its $K$-points. We denote by $(\mathbb G_m^n)_{\mathrm{tors}}$ the subgroup of points of finite order therein, which we call torsion points. A linear torus $G<\mathbb G_m^n$ is an irreducible algebraic subgroup and if $\boldsymbol\zeta \in(\mathbb G_m^n)_{\mathrm{tors}}$ is a torsion point and $G$ is a linear torus, we call $\boldsymbol\zeta G$ a torsion coset. We often identify a subvariety of $\mathbb G_m^n$ with the set of its $K$-points.

Lukas Fink considered the following question in his master thesis \cite{Fink}. Let $X=\{(x,y)\in \mathbb G_m^2(\overline{\mathbb F}_2) : x+y=1\}$ and $c(T)=\#\{\mathbf x\in X: \mathrm{ord}(\mathbf x)\le T\}$. What can we say about the asymptotic growth of $c(T)$ as $T\to \infty$? Considering the first coordinate yields $c(T)\le \#\{x\in \mathbb G_m:\mathrm{ord}(x)\le T\} \ll T^2$. He shows in Proposition 2.7 that $c(T)=o(T^2)$. Fink's result is therefore better than the trivial bound. Based on computations he conjectures that $c(T)\ll T$. Our first result, Theorem \ref{thm:fink} below, includes a power saving over the trivial bound $c(T)\ll T^2$.

We consider the average rather than the number of torsion points of order exactly $n$ since the latter behaves erratic. In Fink's example the set of $n\in\mathbb N$ such that there is no torsion point of order $n$ has natural density one. The number of points on Fink's curve whose order divides $n$ is also bounded by a constant times $n^{1-1/560}$ if (4) in \cite{Ali} holds for the set of points $x\in\mathbb F_{2^{\varphi(n)}}$ such that $x^n=1=(x+1)^n$. On the other hand, this number is $n-1$ for all $n=2^k-1$.

An algebraic subset $X\subset \mathbb G_m^n(K)$ is the zero set of finitely many Laurent polynomials $P_1,\dots, P_r\in K[X_1^{\pm 1},\dots, X_n^{\pm 1}]$ in $\mathbb G_m^n(K)$. The natural generalisation of the question of Fink is as follows. Take an algebraic set $X\subset \mathbb G_m^n(K)$, and denote by $X_T=\{\boldsymbol\zeta\in X\cap (\mathbb G_m^n)_{\mathrm{tors}}: \mathrm{ord}(\boldsymbol\zeta)\le T\}$ the set of torsion points in $X$ of order bounded by $T$. How fast does $\#X_T$ grow? 

In characteristic zero, the Manin-Mumford Conjecture for $\mathbb G_m^n$, a theorem of Laurent \cite{ML} reduces this problem to the case where $X$ is a torsion coset as we will see below in Corollary \ref{cor:char0}. Altough many of our results are true in any characteristic, we highlight the case where $K$ is an algebraic closure of a finite field. In this case all points of $\mathbb G_m^n$ have finite order. To illustrate the kind of results we get and compare them to the work of Fink, we look at the curve defined by $x+y=1$:

\begin{theorem}\label{thm:fink}Let $p$ be a prime and $X=\{(x,y)\in \mathbb G_m^2(\overline{\mathbb F}_p) : x+y=1\}$. Then we have $\#X_T\le 16 T^{3/2}$ for all $T\ge 1.$\end{theorem}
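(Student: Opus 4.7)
The starting point is that in characteristic $p$ the Frobenius $\sigma\colon(x,y)\mapsto(x^p,y^p)$ of $\mathbb G_m^2$ restricts to $X$, since $(x+y)^p=x^p+y^p$. Torsion orders in $\overline{\mathbb F}_p^\times$ are coprime to $p$, so $\sigma$ preserves $X_T$, and a point of $X_T$ of order $N$ has Frobenius orbit of size $f_N:=\operatorname{ord}_N(p)$ contained in $\mathbb F_{q_N}^2$ for $q_N:=p^{f_N}\ge N+1$; consequently the number $a_N$ of points of $X$ of exact order $N$ is divisible by $f_N$, and $\#X_T=\sum_{N\le T}a_N$.

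The heart of the argument will be an upper bound on
\[
b_N:=\#\bigl\{x\in\overline{\mathbb F}_p:\ x,\,1-x\in\mu_N\bigr\}\ \ge\ a_N.
\]
Expanding the indicators $\mathbf 1_{x\in\mu_N}$ and $\mathbf 1_{1-x\in\mu_N}$ in terms of the multiplicative characters of $\mathbb F_{q_N}^\times$ trivial on $\mu_N$ reduces $b_N$ to a double sum of Jacobi sums: the principal term is of size $N^2/q_N$, and Weil's bound $|J(\chi,\psi)|\le q_N^{1/2}$ for nontrivial $\chi,\psi,\chi\psi$ controls the error, yielding $b_N\le N^2/q_N+C_1\sqrt{q_N}+C_2$. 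Combined with the trivial bound $b_N\le N$, this gives $b_N=O(\sqrt N)$ when $q_N$ is close to $N$, and a useful saving when $q_N$ is much larger.

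It remains to sum these bounds to obtain $\sum_{N\le T}a_N\le 16T^{3/2}$. I would split $N\le T$ according to the ratio $q_N/N$: in the small-orbit regime the Weil estimate contributes $O(\sqrt N)$ per $N$ and sums, via $\sum_{N\le T}\sqrt N\le \tfrac23 T^{3/2}$, to the principal term; in the large-orbit regime the Frobenius divisibility $f_N\mid a_N$ forces the number of orbits $a_N/f_N$ to be small, so that the total contribution stays controlled. The main obstacle is the intermediate regime where $q_N$ is polynomially larger than $N$ (around $q_N\approx N^{4/3}$, where the Weil bound optimizes to $b_N=O(N^{2/3})$): this has to be absorbed into the principal term via a careful dichotomy. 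Obtaining the explicit constant $16$ then comes down to bookkeeping the absolute constants in the character-sum estimate and in the elementary summation identities.
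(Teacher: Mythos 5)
Your approach (Jacobi sums and the Weil bound, applied order by order) is genuinely different from the paper's, which applies Minkowski's first theorem to the relation lattice $\Lambda_{\boldsymbol\zeta}=\{(a,b)\in\mathbb Z^2:\zeta^a\xi^b=1\}$ of each torsion point of order $N$, producing $(a,b)\neq 0$ with $\max\{|a|,|b|\}\le N^{1/2}$; the relation $\zeta^a(1-\zeta)^b=1$ then confines $\zeta$ to the roots of a nonzero polynomial of degree at most $|a|+|b|\le 2\max\{|a|,|b|\}$, and summing $2k$ points over the $8k$ vectors with $\max\{|a|,|b|\}=k\le T^{1/2}$ gives $16\sum_{k\le T^{1/2}}k^2\le 16T^{3/2}$. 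Note that this argument never bounds the number $a_N$ of points of a fixed order: the averaging over $N$ is built into the sum over lattice vectors. Your plan, by contrast, must bound each $a_N$ individually and then sum, and this is where it breaks down.

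Two concrete problems. First, the claim that $b_N=O(\sqrt N)$ when $q_N$ is close to $N$ is false: when $q_N=N+1$, i.e.\ $N=p^f-1$, every $x\in\mathbb F_{q_N}\setminus\{0,1\}$ satisfies $x,1-x\in\mu_N$, so $b_N=N-1$ (the paper points this out for $p=2$, $N=2^k-1$); here the main term $N^2/q_N$ has size $N$, not $\sqrt N$. These $N$ are sparse (at most $\log_p(T+1)$ of them up to $T$), so they are harmless, but they cannot supply the principal term $\tfrac23T^{3/2}$ as you propose. Second, and more seriously, for a typical $N$ the multiplicative order $f_N$ of $p$ modulo $N$ is large, so $q_N=p^{f_N}\gg N^2$ and your estimate $b_N\le N^2/q_N+C_1\sqrt{q_N}+C_2$ is worse than the trivial bound $b_N\le N$. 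Your proposed rescue in this regime --- the divisibility $f_N\mid a_N$ --- yields only that the number of Frobenius orbits is at most $N/f_N$, hence $a_N\le N$ again; it produces no saving whatsoever. Since the $N$ with $q_N\gg N^2$ can have density one among $N\le T$, summing the trivial bound over them gives only $O(T^2)$. Indeed, no individual bound $a_N=O(N^{1-\delta})$ with $\delta$ anywhere near $1/2$ appears to be available (the paper notes that even conditionally on a conjecture of \cite{Ali} one only gets $n^{1-1/560}$ for the number of points of order dividing $n$), so any argument that estimates each $a_N$ separately and sums cannot reach $T^{3/2}$. To repair the proof you would need a mechanism that, like the lattice-point count, exploits cancellation across different orders rather than per-order Weil bounds.
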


To state the main result, we have to introduce more terminology. An algebraic subset $X\subset \mathbb G_m^n$ is called admissible if it is finite or does not contain a top dimensional torsion coset. The stabilizer $\mathrm{Stab}(X)$ of an algebraic subset $X\subset \mathbb G_m^n$ is the set of $\mathbf g\in\mathbb G_m^n$ such that $\mathbf gX\subset X$. It is well known that the stabilizer is an algebraic subgroup of $\mathbb G_m^n$. The main result is
\begin{theorem}\label{thm:main}Let $K$ be an algebraically closed field and $X\subset \mathbb G_m^n$ be irreducible, admissible and of dimension $d$. Let $\delta$ be the dimension of the stabilizer $\mathrm{Stab}(X)$. Then we have $X_T\ll_X T^{d+1-\frac{1}{d-\delta+1}}$ for all $T\ge 1$.\end{theorem}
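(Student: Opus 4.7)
The plan is to reduce the bound on $\#X_T$ to a uniform bound on the $m$-torsion of $X$, and then sum over $m$. Since every torsion point of order $m$ lies in $\mu_m^n$, one has $\#X_T \le \sum_{m\le T}\#(X\cap\mu_m^n)$, so the theorem follows once one establishes the pointwise estimate $\#(X\cap\mu_m^n) \ll_X m^{\,d-1/(d-\delta+1)}$; summation over $m \le T$ then yields the exponent $d+1-1/(d-\delta+1)$.

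My first move would be to reduce to the case where $\mathrm{Stab}(X)$ is zero-dimensional. Setting $H := \mathrm{Stab}(X)^0$ and $\pi : \mathbb G_m^n \twoheadrightarrow \mathbb G_m^n/H \cong \mathbb G_m^{n-\delta}$, the image $\bar X = \pi(X)$ is irreducible of dimension $e := d-\delta$, still admissible, and has trivial connected stabilizer. Each fiber of $\pi|_X$ is a coset of $H$ and meets $\mu_m^n$ in at most $m^\delta$ points, so $\#(X \cap \mu_m^n) \le m^\delta \cdot \#(\bar X \cap \mu_m^{n-\delta})$. The problem is then reduced to proving
\begin{equation*}
\#(\bar X \cap \mu_m^{n-\delta}) \;\ll_{\bar X}\; m^{\,e-1/(e+1)}.
\end{equation*}

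The key geometric input for this core estimate is that for any integer $\ell \ge 2$ coprime to $\mathrm{char}(K)$, the subvariety $\bar X \cap [\ell]^{-1}\bar X$ has dimension strictly less than $e$: equality would force $\bar X$ to be a component of $[\ell]^{-1}\bar X$ and hence produce a copy of $\mu_\ell^{n-\delta}$ inside $\mathrm{Stab}(\bar X)$, contradicting the reduction. A Bezout-type bound combined with the estimate $\deg [\ell]^{-1}\bar X \ll \ell^{\,c}\deg \bar X$ (with $c = \mathrm{codim}\,\bar X$) gives $\#\bigl((\bar X \cap [\ell]^{-1}\bar X)\cap \mu_m^{n-\delta}\bigr) \ll \ell^{\,c} m^{\,e-1}$. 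Choosing an integer $L \asymp m^{1/(e+1)}$, every torsion point $\boldsymbol\zeta \in \bar X \cap \mu_m^{n-\delta}$ for which some $\ell \le L$ satisfies $\boldsymbol\zeta^\ell \in \bar X$ is captured by $\bigcup_{\ell \le L}(\bar X \cap [\ell]^{-1}\bar X)$, whose total point count is $\ll L^{c+1} m^{e-1}$; the remaining torsion points, for which no small twist lies in $\bar X$, must be bounded by a pigeonhole argument on the action of $(\mathbb Z/m)^{\times}$.

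The main obstacle is the final step: guaranteeing, for most torsion points $\boldsymbol\zeta \in \bar X$ of order $m$, some small integer $\ell$ with $\boldsymbol\zeta^\ell \in \bar X$. In characteristic zero this is immediate from the Galois action — all $(\mathbb Z/m)^{\times}$-conjugates of $\boldsymbol\zeta$ remain on $\bar X$ — and in fact one recovers the stronger finiteness statement of Laurent's theorem. In positive characteristic the only automatic twists are Frobenius twists, and since $\bar X \subseteq [q]^{-1}\bar X$ whenever $\bar X$ is defined over $\mathbb F_q$, this forces the argument to be routed through primes $\ell$ outside the base field's characteristic data, where an orbit-counting step must provide the small twists. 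Making this step uniform in $m$ and optimally balanced against the Bezout estimate is the crux of the argument, and is where the exponent $1/(d-\delta+1)$ emerges.
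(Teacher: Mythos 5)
Your opening reduction already fails: the pointwise estimate $\#(X\cap\mu_m^n)\ll_X m^{\,d-1/(d-\delta+1)}$ that you declare sufficient is false in positive characteristic. Take Fink's curve $X=\{x+y=1\}\subset\mathbb G_m^2(\overline{\mathbb F}_2)$, so $d=1$, $\delta=0$, and your claim reads $\#(X\cap\mu_m^2)\ll m^{1/2}$. But for $m=2^k-1$ every $x\in\mathbb F_{2^k}^*\setminus\{1\}$ gives a point $(x,1+x)\in X\cap\mu_m^2$, so $\#(X\cap\mu_m^2)=m-1$. The bound of the theorem is only true \emph{on average} over $m$ (the bad $m$ are sparse), which is precisely why the paper never attempts a per-$m$ bound: it works with the cumulative count $\#X_T$, bounds the number of torsion cosets of each order $k$ via an injection into a lower-dimensional admissible variety, and converts $\sum_k a_k/k$ into a power saving by partial summation (Lemma \ref{lem:sum}).

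The second gap is the one you flag yourself, and it is not fillable along the route you propose. In positive characteristic $(\mathbb Z/m)^\times$ does not act on $\bar X\cap\mu_m^{n-\delta}$; only the Frobenius orbit $\{\boldsymbol\zeta^{p^j}\}$ is guaranteed to stay on $\bar X$, and that orbit can be tiny (again $m=2^k-1$, where Frobenius has order $k\approx\log m$). So for a torsion point with no small twist back into $\bar X$ there is no pigeonhole on $(\mathbb Z/m)^\times$ available, and the $[\ell]^{-1}\bar X$ intersection argument — which is essentially Laurent's characteristic-zero proof — has nothing to run on. The paper's substitute for the Galois action is Minkowski's second theorem applied to the relation lattice $\Lambda_{\boldsymbol\zeta}=\{\mathbf a\in\mathbb Z^n:\boldsymbol\zeta^{\mathbf a}=1\}$, which has index $\mathrm{ord}(\boldsymbol\zeta)$ in $\mathbb Z^n$: this produces $n-1$ short multiplicative relations confining $\boldsymbol\zeta$ to a torsion coset of a one-dimensional subgroup of controlled height, on which the defining equation becomes a one-variable Laurent polynomial of controlled degree (Lemmas \ref{lem:fixedA} and \ref{lem:D0}). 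That mechanism is purely lattice-theoretic and characteristic-free, and it is where the exponent $1-1/n$ (hence $1/(d-\delta+1)$ after the projection and stabilizer steps) actually comes from. Your quotient by $\mathrm{Stab}(X)^0$ and the insistence that the image remain admissible are in the right spirit — the paper does both — but the core counting device is missing.
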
 
As in the example there is an analogue of the trivial bound given by 
\begin{proposition}\label{thm:gen}Let $K$ be an algebraically closed field and $X\subset \mathbb G_m^n$ an algebraic subset of dimension $d$. Then $\#X_T\ll_X T^{d+1}$ for all $T\ge 1$.\end{proposition}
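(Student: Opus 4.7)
The plan is to prove the bound by induction on $d=\dim X$. I first reduce to the case where $X$ is irreducible: $X$ is a finite union of irreducible components, each of dimension at most $d$, and the desired bound for $X$ follows from bounds on each component. The base case $d=0$ is immediate, since $X$ is then finite, so $\#X_T\le\#X\ll_X 1$.

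For the inductive step, I pick $d$ coordinates $x_{i_1},\dots,x_{i_d}$ whose restrictions to $X$ are algebraically independent over $K$; such a choice exists because the coordinate functions generate $K(X)$ and the latter has transcendence degree $d$ over $K$. Let $\pi\colon\mathbb G_m^n\to\mathbb G_m^d$ be the corresponding coordinate projection. Then $\pi|_X$ is dominant and, since source and target have equal dimension, generically finite of some degree $e\ge 1$. Crucially, $\pi$ is a homomorphism of algebraic groups (a monomial map), so the order of $\pi(\boldsymbol\zeta)$ divides the order of $\boldsymbol\zeta$ for every $\boldsymbol\zeta\in X$. Hence $\pi(X_T)\subseteq(\mathbb G_m^d)_T$.

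By a standard fact about dominant morphisms of irreducible varieties of equal dimension (one clears denominators in the integral relations for $K(X)/K(\mathbb G_m^d)$), there is a nonempty open $U\subseteq\mathbb G_m^d$ over which $\pi|_X$ is finite of degree $e$. Setting $B=\mathbb G_m^d\setminus U$, a proper closed subset, I split
\[
X_T=\bigl(X_T\cap\pi^{-1}(U)\bigr)\sqcup\bigl(X_T\cap\pi^{-1}(B)\bigr).
\]
Every fiber of $\pi|_X$ over $U$ has at most $e$ points, so the first piece has size at most $e\cdot\#(\mathbb G_m^d)_T$. Any torsion point of $\mathbb G_m^d$ of order $k$ lies in the finite subgroup $\mathbb G_m^d[k]$, which has at most $k^d$ elements, whence
\[
\#(\mathbb G_m^d)_T\le\sum_{k=1}^T k^d\le T^{d+1}.
\]
The second piece is contained in $X\cap\pi^{-1}(B)$, a proper closed subvariety of the irreducible $X$, hence of dimension at most $d-1$. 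Applying the inductive hypothesis to this subvariety of $\mathbb G_m^n$ gives a bound of $O_X(T^d)$, which is absorbed into $O_X(T^{d+1})$.

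The main delicate point is the uniform bound $e$ on the number of points in a fiber of $\pi|_X$ over $U$: without such uniformity the naive projection argument only controls \emph{generic} fibers. Once this is in place, the proof is just organising the count by projection to a transcendence basis of coordinates and peeling off the lower-dimensional exceptional locus by induction.
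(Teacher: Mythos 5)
Your proof is correct, but it takes a different route from the paper's. The paper proves the (slightly stronger) bound $\sum_{1\le m\le T}\#\{\mathbf x\in X:\mathbf x^m=\mathbf 1\}\ll_X T^{d+1}$ by induction on $n+d$, projecting off a \emph{single} coordinate at a time: it splits according to whether the fiber over $\mathbf p\in\mathbb G_m^{n-1}$ is finite or the whole line $\{\mathbf p\}\times\mathbb G_m$, bounds the finite fibers by a uniform constant (via a uniform bound on the number of irreducible components of fibers), and recurses both on the exceptional locus $Z$ and on the closure of the image $\overline{\pi(X)}\subset\mathbb G_m^{n-1}$. You instead project in one step onto a $d$-dimensional coordinate subtorus chosen via a transcendence basis of coordinate functions --- a device the paper itself deploys only later, in the proof of Theorem \ref{thm:sineStab} --- so that the image is dense in $\mathbb G_m^d$ and can be counted directly by $\#(\mathbb G_m^d)_T\le\sum_{k\le T}k^d\le T^{d+1}$, with no recursion needed for the image; your induction is on $d$ alone and only the exceptional locus $X\cap\pi^{-1}(B)$, of strictly smaller dimension, is handled recursively. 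The trade-off is that you must invoke generic finiteness of a dominant map between equidimensional varieties to get the uniform fiber bound $e$ over an open $U$ (which you correctly flag as the delicate point and justify in the standard way), whereas the paper gets its uniform constant $D$ from the cruder but more self-contained bound on the number of irreducible components of fibers of a projection forgetting one coordinate. Both arguments are sound; yours is arguably shorter and conceptually cleaner, the paper's stays closer to elementary fiber-dimension theory and avoids choosing a good transcendence basis at this stage.
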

So again we have a power saving with respect to the general bound.

If $C$ is a torsion coset, we can give an asymptotic formula whose main term depends only on the characteristic of the field, the dimension and the order $\mathrm{ord}(C)=\min\{\mathrm{ord}(\boldsymbol\eta): \boldsymbol\eta\in C\cap (\mathbb G_m^n)_{\mathrm{tors}}\}$. In the theorem below $\zeta$ denotes the Riemann zeta function.
\begin{theorem}\label{thm:countCoset}Let $K$ be an algebraically closed field of characteristic $p$ and let $C\subset \mathbb{G}_m^n$ be a torsion coset of dimension $d$ and order $g$. Then if $p=0$ we have $$\# C_T =\frac{T^{d+1}}{(d+1)\zeta(d+1)g}+\begin{cases}O_g(T\log(T))&\text{if }d=1\\O_{d,g}(T^d) &\text{if } d\ge 2\end{cases}\text{ as }T\to\infty$$
and if $p>0$ we have
$$\#C_T =\frac{(p-1)T^{d+1}}{(p-p^{-d})(d+1)\zeta(d+1)g}+\begin{cases}O_g(T\log(T))&\text{if }d=1\\O_{d,g}(T^d) &\text{if } d\ge 2\end{cases}\text{ as }T\to\infty.$$\end{theorem}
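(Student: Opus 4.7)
My plan is to reduce $\#C_T$ to counting torsion of bounded order in a subtorus and then use Möbius inversion with the Jordan totient. Fix a torsion point $\boldsymbol\zeta\in C$ realizing the minimal order $g$, so $C=\boldsymbol\zeta G$ for a $d$-dimensional subtorus $G$, and note $\gcd(g,p)=1$ since torsion orders are prime to the characteristic. A short argument using the $g_0$-divisibility of $G\cap(\mathbb G_m^n)_\mathrm{tors}$ shows that $g$ equals the order $g_0$ of the coset $\boldsymbol\zeta G$ in the quotient $\mathbb G_m^n/G$: solving $\boldsymbol\eta^{g_0}=\boldsymbol\zeta^{-g_0}$ for $\boldsymbol\eta\in G_\mathrm{tors}$ produces a torsion point $\boldsymbol\zeta\boldsymbol\eta\in C$ of order $g_0$, and minimality of $g$ then gives $g\le g_0$, while $g_0\mid g$ is automatic. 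As a consequence, for every $\boldsymbol\eta\in G\cap(\mathbb G_m^n)_\mathrm{tors}$,
\[\mathrm{ord}(\boldsymbol\zeta\boldsymbol\eta)=\mathrm{lcm}(g,\mathrm{ord}(\boldsymbol\eta)),\]
because $(\boldsymbol\zeta\boldsymbol\eta)^N=1$ forces $\boldsymbol\zeta^N\in G$, hence $g=g_0\mid N$, hence $\boldsymbol\zeta^N=1$, and then $\boldsymbol\eta^N=1$.

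Rewriting $\mathrm{lcm}(g,m)=g\cdot\mathrm{ord}(\boldsymbol\eta^g)$, the bound $\mathrm{ord}(\boldsymbol\zeta\boldsymbol\eta)\le T$ becomes $\mathrm{ord}(\boldsymbol\eta^g)\le T/g$. Since $\gcd(g,p)=1$, the $g$-th power map on $G_\mathrm{tors}$ is surjective with kernel $G[g]$ of size $g^d$, yielding
\[\#C_T=g^d\cdot\#G_{T/g}.\]
This reduces the theorem to computing $\#(\mathbb G_m^d)_S$ with $S=T/g$. The number of torsion points of $\mathbb G_m^d$ of exact order $m$ equals the Jordan totient $\psi_d(m)=\sum_{k\mid m}\mu(k)(m/k)^d$ when $\gcd(m,p)=1$, and vanishes otherwise in positive characteristic. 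Swapping summation order gives
\[\#(\mathbb G_m^d)_S=\sum_{k\le S}\mu(k)\sum_{j\le S/k}j^d,\]
both sums restricted to prime-to-$p$ integers in characteristic $p>0$. Inserting the Faulhaber estimate $\sum_{j\le M}j^d=M^{d+1}/(d+1)+O_d(M^d)$ (and its prime-to-$p$ variant, which contributes a factor $(p-1)/p$ in the main term), extending the outer sum against $\sum_k\mu(k)/k^{d+1}=1/\zeta(d+1)$ (respectively its prime-to-$p$ analogue $(1-p^{-(d+1)})^{-1}/\zeta(d+1)$), and bounding the tails yields the main term together with error $O_g(S\log S)$ for $d=1$ and $O_{d,g}(S^d)$ for $d\ge 2$. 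Substituting $S=T/g$ and multiplying by $g^d$ gives the stated asymptotics.

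The main technical obstacle is the bookkeeping in the positive-characteristic case, in particular verifying that the two prime-to-$p$ correction factors combine to exactly the clean constant $(p-1)p^{-1}/(1-p^{-(d+1)})=(p-1)/(p-p^{-d})$ appearing in the theorem, and that the error terms match the stated orders after multiplication by $g^d$. The only non-computational step is the identification $g=g_0$, which rests on the divisibility of the torsion subgroup of an algebraic torus away from the characteristic.
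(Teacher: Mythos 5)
Your proposal is correct, and it takes a genuinely different route from the paper. The paper first normalizes $C$ to $\{\boldsymbol\eta\}\times\mathbb G_m^d$ by a monoidal transform and then stratifies the count by the residue class of $\mathrm{ord}(\boldsymbol\xi)$ modulo $g$: for each $a$ it must count $\boldsymbol\xi$ with $\mathrm{ord}(\boldsymbol\xi)\equiv a\ (g)$ and $\mathrm{ord}(\boldsymbol\xi)\le (g,a)T/g$, which forces it to first prove an asymptotic for $\sum_{n\le x,\ n\equiv a\,(m)}J_d(n)$ (Theorem \ref{thm:Jord}) and the convolution identity $\sum_{a=1}^g(g,a)J_d((g,a))=J_{d+1}(g)$ (Lemma \ref{lem:conv}), with the positive-characteristic case handled by a second pass subtracting the terms with $p\mid n$. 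You instead exploit $\mathrm{lcm}(g,\mathrm{ord}(\boldsymbol\eta))=g\cdot\mathrm{ord}(\boldsymbol\eta^g)$ together with the fact that the $g$-th power map on $G_{\mathrm{tors}}$ is surjective and exactly $g^d$-to-one (here $(g,p)=1$ is essential and holds automatically), which collapses all the congruence bookkeeping into the single identity $\#C_T=g^d\,\#(\mathbb G_m^d)_{T/g}$; after that only the unrestricted, respectively prime-to-$p$, partial sum of $J_d$ is needed, and your computation of the resulting constant $\frac{1-p^{-1}}{1-p^{-(d+1)}}=\frac{p-1}{p-p^{-d}}$ and of the error terms is correct. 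Your route is shorter and would make the arithmetic-progression machinery of Section \ref{sec:jord} unnecessary for this theorem; its only extra cost is the small argument (which you supply, via divisibility of $G_{\mathrm{tors}}$ away from $p$) that the minimal order $g$ of a torsion point on $C$ equals the order of $\boldsymbol\zeta G$ in $\mathbb G_m^n/G$ — a fact the paper gets for free from its normal form. One presentational caveat: the asymptotic $\sum_{m\le S}J_d(m)=\frac{S^{d+1}}{(d+1)\zeta(d+1)}+O(S\log S\text{ or }S^d)$ and its prime-to-$p$ variant are asserted rather than proved in your sketch, but the hyperbola-method derivation you outline is standard and the error analysis you indicate is the right one.
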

Theorem \ref{thm:countCoset} implies that the exponent $d+1$ in Proposition \ref{thm:gen} is optimal. We also see that torsion cosets are those subvarieties which in some sense contain the most torsion points if we compare among varieties of the same dimension.

Recall that in characteristic zero the Manin-Mumford Conjecture for $\mathbb G_m^n$ reduces the general case to torsion cosets and again we are able to give an asymptotic formula. Let $X\subset \mathbb G_m^n$ be an algebraic set. There exist torsion cosets $C_1, \dots, C_m$ such that $\overline{X\cap(\mathbb G_m^n)_{\mathrm{tors}}}=\bigcup_{i=1}^m C_i$ is the decomposition into irreducible components. Set $$a(X)=\max_{1\le i\le m} \dim(C_i)\text{ and }b(X)=\frac{1}{(a(X)+1)\zeta(a(X)+1)}\sum_{\substack{i=1\\ \dim(C_i)=a(X)}}^m \frac{1}{\mathrm{ord}(C_i)}.$$ Then we have
\begin{corollary}\label{cor:char0}Let $K$ be an algebraic field of characteristic zero, let $X\subset \mathbb{G}_m^n$ be algebraic, $a=a(X)$ and $b=b(X)$. Then we have $\#X_T \ll_X 1$ if $a=0$ and $$\#X_T = b T^{a+1}+\begin{cases}O_X(T\log(T)) & \text{if }a=1\\
O_X(T^a)	 & \text{if }a\ge 2\end{cases}\text{ as }T\to\infty.$$\end{corollary}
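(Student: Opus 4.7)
The plan is to combine Laurent's theorem with the asymptotic already proved for single torsion cosets (Theorem \ref{thm:countCoset}), controlling all secondary contributions by the coarse upper bound in Proposition \ref{thm:gen}. The key preliminary observation is that each irreducible component $C_i$ of the closure $\overline{X\cap(\mathbb G_m^n)_{\mathrm{tors}}}$ is itself contained in $X$ (since $X$ is Zariski closed), and conversely every torsion point of $X$ lies in this closure. Therefore
\[
X\cap(\mathbb G_m^n)_{\mathrm{tors}}=\bigcup_{i=1}^m C_i\cap(\mathbb G_m^n)_{\mathrm{tors}},\qquad X_T=\bigcup_{i=1}^m (C_i)_T.
\]

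If $a(X)=0$ then each $C_i$ is a single torsion point and $\#X_T\le m$ is bounded independently of $T$, which settles that case. For $a=a(X)\ge 1$ I would apply inclusion--exclusion and write
\[
\#X_T=\sum_{i=1}^m \#(C_i)_T \;+\; O\!\Big(\sum_{1\le i<j\le m}\#(C_i\cap C_j)_T\Big).
\]
For indices $i$ with $\dim C_i=a$, Theorem \ref{thm:countCoset} (applied in characteristic zero) gives
\[
\#(C_i)_T=\frac{T^{a+1}}{(a+1)\zeta(a+1)\,\mathrm{ord}(C_i)}+
\begin{cases}O_{C_i}(T\log T)&a=1,\\ O_{C_i}(T^{a})&a\ge 2.\end{cases}
\]
Summing these expressions over the indices of maximal dimension yields exactly $b(X)T^{a+1}$ up to the claimed remainder (the sum is finite and of length depending on $X$).

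It remains to show that the remaining terms are absorbed in the error. For each $C_i$ with $\dim C_i<a$, Proposition \ref{thm:gen} gives $\#(C_i)_T\ll_{C_i} T^{\dim C_i+1}\le T^{a}$. For any pair $i\ne j$, the intersection $C_i\cap C_j$ is a proper subvariety of each of the two irreducible components, so $\dim(C_i\cap C_j)\le a-1$, and Proposition \ref{thm:gen} yields $\#(C_i\cap C_j)_T\ll_X T^{a}$. All such contributions are thus dominated by the error term in the asymptotic, and rolling everything into $O_X$-constants yields the stated formula.

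I do not anticipate a serious obstacle: the argument is essentially bookkeeping once Laurent's theorem is invoked. The only point that requires care is making sure that the decomposition provided by Laurent's theorem is such that $C_i\subset X$ (so that one may genuinely replace $X_T$ by $\bigcup(C_i)_T$) and that the pairwise intersections have strictly smaller dimension than $a$, which follows from the irreducibility and distinctness of the $C_i$.
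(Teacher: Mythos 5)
Your proposal is correct and follows essentially the same route as the paper: invoke Laurent's theorem to reduce to the torsion cosets $C_i$, apply Theorem \ref{thm:countCoset} to the top-dimensional ones, and absorb the lower-dimensional components and the pairwise intersections (which have dimension at most $a-1$) into the error term. The only cosmetic difference is that you bound the intersections via Proposition \ref{thm:gen}, whereas the paper notes they are themselves finite unions of torsion cosets of strictly smaller dimension; both give the same $O_X(T^a)$ contribution.
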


What about lower bounds for $\# X_T$? Fink proved in Proposition 2.7 in \cite{Fink} that $c(T)\ge T/2-2$ for all $T\in\mathbb N$. Using the famous theorem of Lang-Weil, we can prove 
\begin{theorem}\label{thm:charp}Let $p$ be a prime and $n\in\mathbb N$. Let $X\subset \mathbb G_m^n(\overline{\mathbb F}_p)$ be algebraic of dimension $d$. Then there exist $c=c(X)>0$ and $T_0\ge 1$ such that $cT^d \le \#X_T$ for all $T\ge T_0$.\end{theorem}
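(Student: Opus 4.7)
The strategy is to use that over $\overline{\mathbb{F}}_p$ every element is a root of unity, so points of $X$ defined over a finite field $\mathbb{F}_{p^k}$ automatically lie in $X_T$ for $T\approx p^k$, and then to invoke Lang--Weil to count such points.

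First I would pass to a top-dimensional irreducible component. Since $X$ has dimension $d$, we may pick an irreducible component $Y\subset X$ with $\dim Y=d$. Because $X$ is cut out by finitely many Laurent polynomials, all coefficients lie in some $\mathbb{F}_{q}$ with $q=p^m$. After possibly enlarging $m$, the component $Y$ is defined over $\mathbb{F}_q$ and remains geometrically irreducible over $\mathbb{F}_q$ (the Galois group $\mathrm{Gal}(\overline{\mathbb{F}}_p/\mathbb{F}_q)$ permutes the irreducible components of $X_{\overline{\mathbb{F}}_p}$, and a sufficiently large finite extension fixes $Y$). Let $\overline{Y}$ denote the Zariski closure of $Y$ in $\mathbb{A}^n$, which is also absolutely irreducible of dimension $d$ and defined over $\mathbb{F}_q$.

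Next I would apply the Lang--Weil estimate to $\overline{Y}$: there is a constant $C=C(\overline{Y})$ with
\[
\bigl|\#\overline{Y}(\mathbb{F}_{q^k})-q^{kd}\bigr|\le C\,q^{k(d-1/2)}\quad\text{for all }k\ge 1.
\]
The difference $\overline{Y}\setminus Y$ is contained in the union of coordinate hyperplanes with $\overline{Y}$, hence is a proper closed subvariety of $\overline{Y}$, of dimension at most $d-1$; its $\mathbb{F}_{q^k}$-points are therefore $O(q^{k(d-1)})$ by the elementary Schwartz--Zippel-type bound for varieties over finite fields. Combining these, $\#Y(\mathbb{F}_{q^k})\ge \tfrac12 q^{kd}$ once $k\ge k_0$ for some $k_0$ depending on $Y$.

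Every point of $Y(\mathbb{F}_{q^k})\subset\mathbb{G}_m^n(\mathbb{F}_{q^k})$ is a torsion point of order dividing $q^k-1$, so $Y(\mathbb{F}_{q^k})\subset X_{q^k}$. Finally I would interpolate: given $T\ge q^{k_0}$, choose $k\ge k_0$ with $q^k\le T<q^{k+1}$; then
\[
\#X_T\ge \#X_{q^k}\ge \#Y(\mathbb{F}_{q^k})\ge \tfrac12 q^{kd}\ge \tfrac12 q^{-d}\,T^d,
\]
which gives the desired inequality with $c=\tfrac12 q^{-d}$ and $T_0=q^{k_0}$.

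The only mildly delicate step is ensuring geometric irreducibility over some finite field and controlling the part of $\overline{Y}$ at the coordinate hyperplanes; both are standard, so I do not anticipate a real obstacle in executing this plan.
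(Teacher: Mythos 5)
Your plan is correct and essentially identical to the paper's proof: both reduce to a geometrically irreducible top-dimensional component defined over some $\mathbb F_q$, apply Lang--Weil to its closure, discard the lower-dimensional boundary where coordinates vanish, observe that every $\mathbb F_{q^k}$-point of $\mathbb G_m^n$ is torsion of order dividing $q^k-1$, and interpolate $T$ between consecutive powers of $q$ to get the constant $c$. The only cosmetic difference is that the paper takes the projective closure in $\mathbb P^n$ (matching the projective statement of Lang--Weil it cites) while you take the affine closure, which changes nothing of substance.
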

Hence if $X$ is admissible, the correct exponent, if it exists, is in $[d,d+1-\frac{1}{d-\delta+1}]$, where $\delta=\dim(\mathrm{Stab}(X))$. In the example of Fink, numerical analysis suggests that the smallest exponent $d=1$ is possible.

\paragraph{Overview of the proofs.}Let us give two examples. First we look at the curve $C=\{(x,y)\in\mathbb G_m^2: x+y=1\}$ and prove Theorem \ref{thm:fink}. Let $T\ge 1$ and assume that $\boldsymbol\zeta=(\zeta,\xi)\in C_T$ is of order $N$. By Minkowski's first theorem there exists $\mathbf a=(a,b)\in\mathbb Z^2\setminus \{0\}$ such that $\boldsymbol\zeta^{\mathbf a}:=\zeta^a\xi^b=1$ and $|\mathbf a|:=\max\{|a|,|b|\}\le N^{1/2}$. Since $\xi=1-\zeta$ we have $\zeta^a(1-\zeta)^b= 1$. After multiplying by suitable powers of $\zeta$ and $1-\zeta$ we see that $\zeta$ is a root of a nonzero polynomial of degree at most $|a|+|b|$. Hence the number of $\boldsymbol\zeta\in C$ such that $\boldsymbol\zeta^{\mathbf a}=1$ is at most $2|\mathbf a|$. Thus we have the bound $\#C_T\le 2\sum_{|\mathbf a|\le T^{1/2}}|\mathbf a|$. For a fixed $k\ge 1$ there are $4(2k+1)-4=8k$ vectors $\mathbf a\in\mathbb Z^2$ such that $|\mathbf a|=k$. Thus we find 
$$\#X_T\le 16 \sum_{k=1}^{T^{1/2}}k^2 \le 16T^{3/2}.$$

Now let us consider $Y=\{(x,y,z)\in\mathbb G_m^3: x+y=1\}$. Note that Stab$(Y)=\{\mathbf 1\}\times \mathbb G_m$ is one dimensional. Hence the exponent in Theorem \ref{thm:main} equals $5/2$.
Then we observe that any $(\zeta_1,\zeta_2,\zeta_3)\in Y\cap(\mathbb G_m^3)_\mathrm{tors}$ is contained in the torsion coset $\{\zeta_1\}\times \{\zeta_2\}\times \mathbb G_m\subset Y.$ The order of such a torsion coset is given by $\mathrm{ord}(\zeta_1,\zeta_2)$. Thus any $\boldsymbol\zeta\in Y_T$ is contained in a torsion coset $\{\boldsymbol\xi\}\times \mathbb G_m$ for some $\boldsymbol\xi\in C_T$. Therefore we can bound $\#Y_T\le \sum_{\boldsymbol\xi\in C_T}\#(\{\boldsymbol\xi\}\times \mathbb G_m)_T$. Denote by $g$ the order of $\boldsymbol\xi\in C_T$. We claim that $\#(\{\boldsymbol\xi\}\times \mathbb G_m)_T\le T^2/g$. If $\zeta\in \mathbb G_m$ is of order $n$, the order of $(\boldsymbol\xi, \zeta)$ is the least common multiple of $g$ and $n$. Thus $(\boldsymbol\xi, \zeta)\in (\{\boldsymbol\xi\}\times \mathbb G_m)_T$ if and only if $n\le \gcd(g,n)T/g$. The number of elements of order $n$ in $\mathbb G_m$ is given by Euler's totient $\varphi(n)$ except if the characteristic $p>0$ divides $n$. Since in positive characteristic the order is always coprime to $p$, there are no elements of order $n$, whenever $p$ divides $n$. Thus in any case there are at most $\varphi(n)$ elements of order $n$ in $ \mathbb G_m$. We therefore have $$\#(\{\boldsymbol\xi\}\times \mathbb G_m)_T\le \sum_{1\le n\le \gcd(g,n)T/g}\varphi(n)=\sum_{d|g}\sum_{n: \gcd(g,n)=d, n\le dT/g}\varphi(n).$$ 
If $d|g$ and $n$ is as in the inner sum, we can write $n=kd$ for some $1\le k\le T/g$. And since $\varphi(n)=n\prod_{p\mid n}(1-p^{-1})$ we see that $\varphi(ab)\le a\varphi(b)$ for all $a,b\in\mathbb N$. It is also well known, that $\sum_{d|n}\varphi(d)=n$ for all $n\in\mathbb N$. Therefore we get $$\#(\{\boldsymbol\xi\}\times \mathbb G_m)_T\le \sum_{d|g}\sum_{1\le k\le T/g} \varphi(kd)\le \sum_{d|g}\varphi(d)\sum_{1\le k\le T/g} k \le g(T/g)^2=T^2/g.$$ Let $a_n$ denote the number of elements of order $n$ in $C$. Then we can bound $\#Y_T\le T^2\sum_{n=1}^T a_n /n$ and we know that $\sum_{n=1}^N a_n\le 16N^{3/2}$ for every $N\ge 1$. To get sums of this kind, we write $1/n=1/T+\sum_{k=n}^{T-1}(\frac{1}{k}-\frac{1}{k+1})=1/T+\sum_{k=n}^{T-1}\frac{1}{k(k+1)}$ for all $n<T$. Thus we find $$\sum_{n=1}^T a_n /n = \sum_{k=1}^{T-1}\frac{1}{k(k+1)}\sum_{n=1}^k a_n+1/T\sum_{n=1}^T a_n\le 16\sum_{k=1}^{T-1}k^{3/2-2}+16T^{3/2-1}\le 48T^{1/2}$$ using that $\int x^{-1/2}dx = 2x^{1/2}$ and hence $Y_T\le 48T^{5/2}$.\\

Now let us consider a hypersurface $X\subset \mathbb G_m^n$. We divide $X$ into two parts: the union of all torsion cosets contained in $X$ of positive dimension and its complement $X^*$. To bound $\#X^*_T$ we generalise the proof for $C$: Instead of Minkowski's first theorem we can use the second theorem to find short linearly independent vectors $\mathbf a_1,\dots, \mathbf a_{n-1}\in\mathbb Z^n$ such that $\boldsymbol\zeta^{\mathbf a_i}=1$. The $\mathbf x\in\mathbb G_m^n$ such that $\mathbf x^{\mathbf a_i}=1$ for all $i=1,\dots, n-1$ are given by torsion cosets $\{\boldsymbol\xi t^{\mathbf u^\top}: t\in\mathbb G_m\}$ for some $\mathbf u\in\mathbb Z^n\setminus \{0\}$ and finitely many $\boldsymbol\xi\in(\mathbb G_m^n)_{\mathrm{tors}}$; we refer to Section \ref{sec:not} for our notation. Let $P\in K[X_1^{\pm1},\dots, X_n^{\pm 1}]$ be such that $X$ is the vanishing locus of $P$ in $\mathbb G_m^n$. Then $\boldsymbol\xi t^{\mathbf u^\top}$ is in $X$ if and only if $Q(t)=P(\boldsymbol\xi t^{\mathbf u^\top})=0$. If $Q$ is not the zero polynomial, the number of such $t$ is bounded in terms of the degree of $P$ and $|\mathbf u|$. This is always the case if $\boldsymbol\xi t^{\mathbf u^\top}\in X^*$. Here we use the fact that $X$ is a hypersurface. Summing over all possible $(n-1)$-tuples $\mathbf a_1,\dots, \mathbf a_{n-1}$ gives us a bound $X^*_T\ll_X T^{n-1/n}$.

For the union of positive dimensional torsion cosets we show that if we consider the maximal cosets, then the number of corresponding algebraic groups is finite and every torsion coset in $X$ is contained in one of them. However in positive characteristic it is possible that for a fixed linear torus $G$ there are infinitely many torsion cosets $\boldsymbol\zeta G$ which are contained in $X$. So let us fix $G$. To control the number of cosets $\boldsymbol\zeta G$ which are contained in $X$, we map $\boldsymbol\zeta G$ to $\boldsymbol\zeta^L$ for some matrix $L$ such that $\mathbf g^L=\mathbf 1$ for all $\mathbf g\in G$. Note that $\mathrm{ord}(\boldsymbol\zeta^L)\le \mathrm{ord}(\boldsymbol\zeta G)$. We can find equations for the image and use induction on $n$. Thus we have a bound for the number of cosets of $G$ contained in $X$. Together with a bound for cosets we get the claimed result for hypersurfaces.

The generalisation to arbitrary algebraic sets follows by projecting $X$ to some coordinates and doing algebraic geometry. 

For Theorem \ref{thm:countCoset} we can assume that $G=\{\boldsymbol\zeta\}\times \mathbb G_m^d$ for some $\boldsymbol\zeta\in(\mathbb G_m^{n-d})_{\mathrm{tors}}$ of order $g$, say. Since the order of $(\boldsymbol{\zeta,\xi})$ is $\mathrm{lcm}(g,\mathrm{ord}(\boldsymbol\xi))=\frac{g}{(g,\mathrm{ord}(\boldsymbol\xi))}\mathrm{ord}(\boldsymbol\xi)$, we have to count the $\boldsymbol\xi\in(\mathbb G_m^d)_{\mathrm{tors}}$ whose order is bounded and belongs to a fixed congruence class modulo $g$. With the M\"obius inversion formula one can see that the number of elements of order $n$ is given by Jordan's totient $J_d(n)=\mu\star I_d(n)$, where $\mu$ is the M\"obius function and $I_d(n)=n^d$. Once we have the asymptotics of $J_d$ over arithmetic progressions, Theorem \ref{thm:countCoset} is not hard to deduce. In positive characteristic the orders are always coprime to $p$. If $n$ is coprime to $p$, we still have $J_d(n)$ elements of order $n$. Thus the case $p>0$ follows from the case $p=0$.

The proofs of Corollary \ref{cor:char0} and Theorem \ref{thm:charp} are short and straight forward.

\paragraph{Organisation of the article}

The algebraic subgroups of $\mathbb G_m^n$ play an important role in the results as well as in the proofs. Therefore we need a characterisation of them. This is already known in characteristic zero (Theorem 3.2.19 in \cite{BG}). In positive characteristic it is possibly known. But we could not pin down a suitable reference and have therefore added an appendix.

In section \ref{sec:jord} we prove Theorem \ref{thm:countCoset}.

The next goal is to estimate $\#X^*_T$. The most technical part is to bound the degree of $Q(t)$. Some known preparations are done in the second appendix and used to prove the bound in section \ref{sec:out}.

For the proof of Theorem \ref{thm:main} when $X$ is a hypersurface, we first deal with common roots and admissibility in section \ref{sec:admiss}, then we establish an explicit bound when $X$ is a coset. In section \ref{sec:hyp} we give the proofs of Theorems \ref{thm:main} and \ref{thm:gen} when $X$ is a hypersurface.

Finally we are able to prove Theorems \ref{thm:main} and \ref{thm:gen} in full generality in the next section.

In the last two sections we establish Corollary \ref{cor:char0} and Theorem \ref{thm:charp}.

\paragraph{Acknowledgements} I want to thank my advisor Philipp Habegger for good discussions and advice, and Pierre le Boudec for his help concerning the sum of Jordan's totient in arithmetic progressions. This will be part of my PhD thesis. I have received funding from the Swiss National Science Foundation grant number 200020\_184623.

\section{Notation}\label{sec:not}
We write $\mathbb{N}=\{1,2,3,\dots\}$ and $K$ denotes always an algebraically closed field of any characteristic. Let $n\in\mathbb{N}$ and $R=K[X_1^{\pm 1},\dots X_n^{\pm 1}]$ the ring of Laurent polynomials in $n$ variables. Let $R^*$ be the units of $R$. Then for $r,s\in\mathbb{N},\mathbf x=(x_1,\dots,x_n)\in (R^*)^n$ and $A=(a_{i,j})\in\mathrm{Mat}_{n\times m}(\mathbb{Z})$ we define $\mathbf x^A=(x_1^{a_{1,1}}\cdots x_n^{a_{n,1}},\dots , x_1^{a_{1,m}}\cdots x_n^{a_{n,m}})\in (R^*)^m$. This definition is compatible with matrix multiplication in the sense that  $\mathbf x^{AB}=(\mathbf x^A)^B$ for all $k,m,n\in\mathbb{N},\mathbf x\in (R^*)^k, A\in\mathrm{Mat}_{k\times m}(\mathbb{Z})$ and $B\in\mathrm{Mat}_{m\times n}(\mathbb{Z})$. The letter $\mathbf X$ denotes the element $(X_1,\dots, X_n)\in (R^*)^n$ and hence we can write a general Laurent polynomial $P\in R$ as $P=\sum_{\mathbf i\in \mathbb{Z}^n} p_{\mathbf i}\mathbf{X^i}$. The support of such a polynomial is the finite set of $\mathbf i\in\mathbb{Z}^n$ such that $p_{\mathbf i}\neq 0$ and we denote it by $\mathrm{Supp}(P)$.\\
For $n\in\mathbb{N}$ we set $\mathbb{G}_m^n(K)=(K\setminus \{0\})^n$ which together with the coordinate-wise multiplication is a group.  We write $\mathbb{G}_m^n(K)_{\mathrm{tors}}$ for the elements of finite order in $\mathbb{G}_m^n(K)$ and call them torsion points. If it is clear what $K$ is, we usually omit $K$ in the notation. Elements $\zeta\in(\mathbb{G}_m)_{\mathrm{tors}}$ are called roots of unity. For $N\in\mathbb{N}$ we write $\mu_N(K)$ or $\mu_N$ for the roots of unity $\zeta$ in $K$ such that $\zeta^N=1$.\\
We denote by $E_n$ the identity matrix in the ring of $n\times n$-matrices $\mathrm{Mat}_{n}(\mathbb{Z})$. We write $\mathbf{0}$ and $\mathbf{1}$ for matrices whose entries are all zero or one respectively. We write $\mathbf e_i$ for the $i$-th vector of the standard basis of $\mathbb{Z}^n$. The $n$ will be clear from the context.
We denote by $\mathbb{A}^n(K)$ the affine space $K^n$ endowed with the Zariski topology.\\
For $\mathbf a=(a_1,\dots, a_n)\in\mathbb{Z}^n$ we denote by $|\mathbf a|=\max_{1\le i\le n}|a_i|$ the maximum of the absolute vaues of its coordinates, where $|\cdot|$ on the right hand side is the usual absolute value. We use the Vinogradov notation $\ll$ and $\ll_{I}$ for some set $I$ of variables.

\section{Jordan's totient function and torsion cosets}\label{sec:jord}
In characteristic zero the $d$-th Jordan's totient function $J_d$ counts the number of points of order $n$ in $\mathbb G_m^d$. Thus it is a natural generalization of Euler's totient function to higher dimensions. In this section we prove an asymptotic formula for the sum of Jordan's totient function in arithmetic progressions and deduce an asymptotic formula for $\#C_T$, where $C$ is a torsion coset.
\begin{definition}Let $\mathcal{A}=\{f:\mathbb{N}\to\mathbb{C}\}$ be the set of arithmetic functions. We say that $f\in \mathcal A$ is completely multiplicative if $f(nm)=f(n)f(m)$ for all $n,m\in\mathbb{N}$. The Dirichlet convolution of $f,g\in\mathcal A$ is defined as $(f\star g)(n)=\sum_{d|n}f(d)g(n/d).$ We can associate to $f\in \mathcal A$ the formal Dirichlet series $\sum_{n=1}^\infty f(n)n^{-s}$, where $s$ is an indeterminate. \end{definition}
\begin{definition}Define $\eta\in\mathcal A$ by $\eta(1)=1$ and $\eta(n)=0$ for all $n\ge 2$ and $\epsilon\in\mathcal A$ by $\epsilon(n) = 1$ for all $n\in\mathbb N$. The M\"obius function $\mu\in \mathcal A$ is nonzero only on squarefree numbers and if $n$ is a product of $r$ distinct primes we have $\mu(n)=(-1)^r$.\\
For $d\in\mathbb N$ we define $I_d\in\mathcal A$ as $n\mapsto n^d$ and Jordan's totient function $J_d\in\mathcal A$ given by $J_d(n)=\mu\star I_d$, a generalisation of Euler's totient function $\varphi=J_1$.\\
Any character $\chi: \left(\mathbb{Z}/m\mathbb Z\right)^*\to\mathbb C^*$ gives rise to $f_\chi\in\mathcal A$ defined by $f_\chi(n)=\chi(n+m\mathbb Z)$ if $(n,m)=1$ and zero otherwise. We will sometimes abuse notation and write $\chi$ for $f_\chi$.\\
The Riemann zeta function $\zeta(s)=\sum_{n=1}^\infty n^{-s}$ is the series corresponding to $\epsilon$ and the Dirichlet $L$-functions are defined by $L(s,\chi)=\sum_{n=1}^\infty \chi(n)n^{-s}$.\end{definition}
\begin{remark}Together with the pointwise addition the Dirichlet convolution endows $\mathcal A$ with the structure of a commutative unitary ring with multiplicative identity $\eta$. If $f\in\mathcal A$ is completely multiplicative we have $f(g\star h)=(fg)\star (fh)$ for all $g,h\in \mathcal A$. Here $fg$ denotes the pointwise multiplication. For all $f,g\in \mathcal{A}$ we have $$\sum_{n=1}^\infty f(n)n^{-s}\sum_{m=1}^\infty g(m)m^{-s}=\sum_{n=1}^\infty (f\star g)(n)n^{-s}$$ as formal series. In particular if all three series converge absolutely for some $s\in\mathbb{C}$ the equality above holds in $\mathbb{C}$.\end{remark}
\begin{remark}M\"obius inversion is equivalent to $\mu\star \epsilon=\eta$. The functions $I_d$ and $f_\chi$ are completely multiplicative for all $d\in\mathbb N$ and characters $\chi$ respectively. The explicit formula for Jordan's totient function reads $J_d(n)=n^d\prod_{p|n}(1-p^{-d})$ where the product is taken over all primes dividing $n$ and as usual the empty product equals one. The Riemann $\zeta$-function as well as the Dirichlet $L$-functions converge absolutely if the real part of $s$ is strictly larger than $1$.\end{remark}

\begin{lemma}\label{lem:sumnk}Let $d,m\in\mathbb{N}$ and $a\in\mathbb{Z}$. Then $$\sum_{\substack{n\le x\\ n\equiv a (m)}} n^d=\frac{x^{d+1}}{(d+1)m} +O_{d,m}\left(x^d\right) \text{ as } x\to\infty.$$\end{lemma}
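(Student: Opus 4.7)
The plan is to reduce the sum to the classical asymptotic $\sum_{k\le y}k^d=\frac{y^{d+1}}{d+1}+O_d(y^d)$, which follows either from Faulhaber's formula or from comparing the sum to the integral $\int_0^y t^d\,dt$ with the usual monotonicity error $O_d(y^d)$.

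First I would parametrise the arithmetic progression: let $a'\in\{0,1,\dots,m-1\}$ be the representative of $a$ modulo $m$ (replacing $a'$ by $m$ if $a'=0$ so that we only sum over $n\ge 1$). The integers $n\le x$ with $n\equiv a\pmod m$ are exactly $n=a'+km$ for $k=0,1,\dots,K$, where $K=\lfloor (x-a')/m\rfloor$. Thus
$$\sum_{\substack{n\le x\\ n\equiv a\,(m)}} n^d \;=\; \sum_{k=0}^{K} (a'+km)^d.$$

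Next I would expand using the binomial theorem:
$$(a'+km)^d \;=\; m^d k^d+\sum_{j=0}^{d-1}\binom{d}{j}a'^{\,d-j}m^j k^j \;=\; m^d k^d+O_{d,m}(k^{d-1}),$$
since $a'<m$ depends only on $m$. Summing over $0\le k\le K$ and applying the classical asymptotic to the leading term gives
$$m^d\sum_{k=0}^K k^d \;=\; m^d\!\left(\frac{K^{d+1}}{d+1}+O_d(K^d)\right),$$
while the lower-order terms contribute $O_{d,m}(K^d)$ in total. Since $K=(x-a')/m+O(1)$ and $a'=O_m(1)$, a final expansion of $(x-a')^{d+1}$ shows $m^dK^{d+1}/(d+1) = x^{d+1}/((d+1)m)+O_{d,m}(x^d)$, and $K^d=O_m(x^d)$, which combines to the stated estimate.

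There is no real obstacle here beyond carefully tracking that every implicit constant depends only on $d$ and $m$; the reduction to Faulhaber's formula does all the analytic work, and handling the residue $a'$ and the integer part $K$ is routine bookkeeping.
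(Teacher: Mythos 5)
Your proposal is correct and follows essentially the same route as the paper: parametrise the progression as $n=a'+km$, expand $(a'+km)^d$ by the binomial theorem, and apply the classical asymptotic $\sum_{k\le y}k^i=\frac{y^{i+1}}{i+1}+O_i(y^i)$ to the leading term while absorbing the rest into $O_{d,m}(x^d)$. The only cosmetic difference is that you track the floor $K$ explicitly and expand at the end, whereas the paper substitutes $(x-b)/m=x/m+O_m(1)$ directly inside the power-sum asymptotic.
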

\begin{proof}Let $b\in\{1,\dots, m\}$ such that $b\equiv a (m)$. Then every $n\le x$ which is congruent to $a$ modulo $m$ is of the form $n=b+lm$ for some $0\le l\le(x-b)/m$. Thus we have $$\sum_{\substack{n\le x\\ n\equiv a (m)}} n^d = \sum_{0\le l\le (x-b)/m}(b+lm)^d=  \sum_{0\le l\le (x-b)/m}\sum_{i=0}^d \binom{d}{i} b^{d-i} (lm)^{i}= \sum_{i=0}^d \binom{d}{i} b^{d-i} m^{i}\sum_{0\le l\le (x-b)/m} l^{i}.$$
By exercise 6.9 in \cite{StRef} we know that $$\sum_{0\le l\le (x-b)/m} l^{i}= \frac{\left(\frac{x-b}{m}\right)^{i+1}}{i+1}+O_d\left(\left(\frac{x-b}{m}\right)^i\right)= \frac{(x/m)^{i+1}}{i+1}+O_d\left(x^i\right).$$ So if $i\le d-1$ then $\sum_{0\le l\le (x-b)/m} l^{i}= O_{d}(x^d)$ and hence $$\sum_{\substack{n\le x\\ n\equiv a (m)}} n^d = m^d \frac{x^{d+1}}{(d+1)m^{d+1}}+O_{d,m}\left(x^d\right)=\frac{x^{d+1}}{(d+1)m} +O_{d,m}\left(x^d\right).\qedhere$$\end{proof}

\begin{lemma}\label{lem:zeta}Let $d,m\in\mathbb{N}$. Then $$\sum_{\substack{ n\le x\\ (n,m)=1}}\frac{\mu(n)}{n^{d+1}}=\frac{m^{d+1}}{\zeta(d+1)J_{d+1}(m)}+O_d\left(x^{-d}\right)\text{ as }x\to\infty.$$\end{lemma}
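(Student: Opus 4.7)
The plan is to compute the full series $S:=\sum_{n\ge 1,\,(n,m)=1}\mu(n)/n^{d+1}$ in closed form, and then truncate at $x$ via a routine integral-comparison bound on the tail.

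For the main-term computation I use absolute convergence (justified by $d+1\ge 2$ and $|\mu|\le 1$) together with the unique factorisation $n=ab$, where every prime factor of $a$ divides $m$ and $(b,m)=1$. Since $\gcd(a,b)=1$ and $\mu$ is multiplicative, $\mu(n)=\mu(a)\mu(b)$, so splitting the Dirichlet series yields
$$\frac{1}{\zeta(d+1)}=\sum_{n=1}^\infty\frac{\mu(n)}{n^{d+1}}=\biggl(\sum_{\mathrm{rad}(a)\mid m}\frac{\mu(a)}{a^{d+1}}\biggr)\,S.$$
The first factor collapses because $\mu(a)\neq 0$ forces $a$ squarefree, giving $\sum_{\mathrm{rad}(a)\mid m}\mu(a)/a^{d+1}=\sum_{a\mid m,\,a\text{ squarefree}}\mu(a)/a^{d+1}=\prod_{p\mid m}(1-p^{-(d+1)})=J_{d+1}(m)/m^{d+1}$, where the last equality is the explicit formula for Jordan's totient recalled in the remark preceding the lemma. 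Solving yields $S=m^{d+1}/(\zeta(d+1)J_{d+1}(m))$, which is the desired main term.

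For the truncation I would simply bound the tail by the trivial estimate $|\mu(n)|\le 1$ together with integral comparison of the decreasing function $t^{-(d+1)}$:
$$\biggl|\,S-\sum_{\substack{n\le x\\(n,m)=1}}\frac{\mu(n)}{n^{d+1}}\biggr|\le\sum_{n>x}\frac{1}{n^{d+1}}\ll_d x^{-d}\quad\text{for }x\ge 1.$$
Combining this with the closed form for $S$ gives exactly the stated asymptotic. I do not foresee any real obstacle; the only point that needs a word of justification is the interchange of summation in the factorisation step, which is immediate from absolute convergence for $\mathrm{Re}(s)>1$ as noted in the remark preceding the lemma.
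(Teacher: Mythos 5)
Your proposal is correct and follows essentially the same route as the paper: compute the full series $S=\sum_{(n,m)=1}\mu(n)n^{-(d+1)}$ in closed form and then bound the tail $\sum_{n>x}n^{-(d+1)}\ll_d x^{-d}$ by integral comparison. The only (cosmetic) difference is in deriving the main term: the paper uses the principal character $\chi_0$ and the convolution identity $(\mu\chi_0)\star\chi_0=\eta$ to get $S\,L(d+1,\chi_0)=1$, whereas you factor $n=ab$ with $\mathrm{rad}(a)\mid m$ and $(b,m)=1$ directly; both amount to $S=1/L(d+1,\chi_0)=m^{d+1}/(\zeta(d+1)J_{d+1}(m))$.
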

\begin{proof}Let $\chi_0$ be the principal character modulo $m$, and recall that $\mu\star \epsilon = \eta$. Since $\chi_0$ is completely multiplicative we have $(\mu\chi_0)\star \chi_0=(\mu\chi_0)\star (\epsilon\chi_0)=\chi_0(\mu\star \epsilon)=\chi_0\eta=\eta$. Therefore the infinite sum $S=\sum_{(n,m)=1}\mu(n)n^{-(d+1)}$ satisfies $SL(d+1,\chi_0)=1$. We have $L(d+1,\chi_0)=\zeta(d+1)\prod_{p|m}(1-p^{-(d+1)})=\zeta(d+1)J_{d+1}(m)m^{-(d+1)}$ and hence $S=\frac{m^{d+1}}{\zeta(d+1)J_{d+1}(m)}$.
For the error term we find \begin{align*}\left|\sum_{\substack{n>x\\ (n,m)=1}} \frac{\mu(n)}{n^{d+1}}\right|&\le \sum_{n>x}n^{-(d+1)}\le\sum_{l=\lceil x\rceil -1}^\infty \int_{l}^{l+1} t^{-(d+1)}dt \\
&=\int_{\lceil x\rceil -1}^\infty  t^{-(d+1)}dt = 1/d(\lceil x\rceil -1)^{-d}=O_d(x^{-d}).\qedhere\end{align*}\end{proof}

\begin{theorem}\label{thm:Jord}Let $d,m\in\mathbb{N}$, $a\in\mathbb{Z}$ and $g=(a,m)$. Then $$\sum_{\substack{n\le x\\ n\equiv a (m)}} J_d(n)=\frac{m^dJ_d(g)x^{d+1}}{(d+1)\zeta(d+1)g^dJ_{d+1}(m)}+\begin{cases}O_{m}(x\log(x)) &\text{if }d =1\\ O_{d,m}(x^d) &\text{if } d\ge 2.\end{cases}\text{ as } x\to \infty.$$\end{theorem}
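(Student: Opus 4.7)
The plan is to apply Möbius inversion $J_d = \mu \star I_d$, swap the order of summation, and reduce to Lemmas \ref{lem:sumnk} and \ref{lem:zeta}. Writing
\begin{equation*}
\sum_{\substack{n \le x\\ n \equiv a\,(m)}} J_d(n) = \sum_{k \le x} \mu(k) \sum_{\substack{l \le x/k\\ kl \equiv a\,(m)}} l^d,
\end{equation*}
and setting $d_k = \gcd(k, m)$, the inner congruence $kl \equiv a\,(m)$ is solvable in $l$ iff $d_k \mid a$, which (since $d_k \mid m$ always) is equivalent to $d_k \mid g$; when it is solvable, the admissible $l$ form a single residue class modulo $m/d_k$. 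Lemma \ref{lem:sumnk} therefore evaluates the inner sum as $\frac{d_k x^{d+1}}{(d+1) m k^{d+1}} + O_{d,m}(x^d/k^d)$ for such $k$, and gives zero otherwise.

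Summing over $k \le x$, the accumulated error is $O_{d,m}\bigl(x^d \sum_{k \le x} k^{-d}\bigr)$, which is $O_{d,m}(x^d)$ for $d \ge 2$ and $O_m(x \log x)$ for $d = 1$, matching the claimed shape. For the main term I would extend the remaining sum to all $k \ge 1$; the tail $\sum_{k > x} \mu(k) d_k/k^{d+1}$ is bounded trivially by $m \sum_{k > x} k^{-(d+1)} = O_{d,m}(x^{-d})$, contributing an extra $O_{d,m}(x)$ that is absorbed.

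The heart of the argument is the evaluation of the resulting infinite series. Since $\mu$ vanishes off the squarefree integers, every $k$ that matters factors uniquely as $k = k_1 k_2$ with $k_1 = d_k \mid m$ squarefree and $\gcd(k_2, m) = 1$, so $\mu(k) = \mu(k_1)\mu(k_2)$; moreover, because $g \mid m$, the constraint $d_k \mid g$ becomes simply $k_1 \mid g$. Consequently
\begin{equation*}
\sum_{\substack{k \ge 1\\ d_k \mid g}} \frac{\mu(k)\,d_k}{k^{d+1}} = \Bigl(\sum_{k_1 \mid g} \frac{\mu(k_1)}{k_1^{d}}\Bigr) \Bigl(\sum_{\substack{k_2 \ge 1\\ \gcd(k_2, m) = 1}} \frac{\mu(k_2)}{k_2^{d+1}}\Bigr) = \frac{J_d(g)}{g^d} \cdot \frac{m^{d+1}}{\zeta(d+1)\,J_{d+1}(m)},
\end{equation*}
using the explicit formula $J_d(g)/g^d = \prod_{p \mid g}(1 - p^{-d})$ for the first factor and Lemma \ref{lem:zeta} for the second. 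Multiplying by $\frac{x^{d+1}}{(d+1)m}$ yields the claimed main term $\frac{m^d J_d(g) x^{d+1}}{(d+1)\zeta(d+1) g^d J_{d+1}(m)}$.

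The main obstacle is the bookkeeping in this factorisation: one must confirm that $d_k \mid g$ is precisely the solvability condition, and that the resulting restricted Dirichlet series splits cleanly as a product of a finite Euler factor at the primes dividing $g$ and the series $L$-value handled by Lemma \ref{lem:zeta}. After these verifications, the theorem is a direct combination of the two preliminary lemmas.
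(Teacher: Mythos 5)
Your proposal is correct and follows essentially the same route as the paper: Möbius inversion, swapping the order of summation, the solvability condition $(k,m)\mid g$ with a single residue class modulo $m/(k,m)$, Lemma \ref{lem:sumnk} for the inner sum, and Lemma \ref{lem:zeta} for the resulting series over $k$ coprime to $m$. The only cosmetic difference is that you extend to the infinite Dirichlet series and factor it as a finite Euler factor over $p\mid g$ times the $L$-value, whereas the paper keeps the sum truncated, groups by $g'=(k,m)$, and recombines via $J_d(g)=\sum_{g'\mid g}\mu(g')(g/g')^d$; both rest on the same factorisation $k=k_1k_2$ with $k_1=(k,m)$.
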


\begin{proof}We claim that for natural numbers $k,l$ the following statements are equivalent: $(i): kl\le x$ and $kl\equiv a (m)$ and $(ii): k\le x, l\le x/k, (k,m)|g$ and $l\equiv A\bar D (M)$, where $a=(k,m)A, k=(k,m)D, m=(k,m)M$ and $D\bar{D}\equiv 1 (M)$. So assume that there are $k,l\in\mathbb{N}$ such that $kl\le x$ and $kl\equiv a (m)$. Since $l\ge 1$ we find $k\le x$ and $l\le x/k$ is equivalent to $kl\le x$. Since $(k,m)$ divides both $k$ and $m$, it also has to divide $a$ and hence also $(a,m)=g$. Dividing $kl-a = cm$ through $(k,m)$ we find $Dl\equiv A (M)$. Since $(D,M)=1$ there is $\bar D$ such that $D\bar D \equiv 1 (M)$. If we multiply the congruence by $\bar D$ we get the congruence of $(ii)$. For the other direction we multiply the congruence by $D$ and find $Dl-A = cM$ for some integer $c$. Multiplying this equation by $(k,m)$ we find that $kl\equiv a\mod m$.\\
Recall that $J_d=\mu\star I_d$. Hence we find $$\sum_{\substack{n\le x\\ n\equiv a (m)}} J_d(n)=\sum_{\substack{n\le x\\ n\equiv a (m)}} \sum_{k|n}\mu(k)(n/k)^d=\sum_{\substack{kl\le x\\ kl\equiv a (m)}} \mu(k)l^d=\sum_{\substack{k\le x\\ (k,m)|g}} \mu(k) \sum_{\substack{l\le x/k\\l\equiv A\bar{D}(M)}} l^d,$$ where $A, \bar D$ and $M$ are defined as in the claim.
By Lemma \ref{lem:sumnk} the inner sum equals $\frac{x^{d+1}}{(d+1)k^{d+1}M}+O_{d,M}(x^d/k^d)=\frac{x^{d+1} (k,m)}{(d+1)k^{d+1}m}+O_{d,M}(x^d/k^d)$. Since $M$ is a divisor of $m$, we can find an absolute constant which holds for all $k$ and therefore the sum of the error terms is $O_{d,m}(x^d\sum_{k\le x}k^{-d})=O_{d,m}(x^d\log(x))$ where we need the log term only if $d=1$ since the sum is $O(\log(x))$ if $d=1$ and $O(1)$ if $d\ge 2$. Therefore we find $$\sum_{\substack{n\le x\\ n\equiv a (m)}} J_d(n)=\sum_{\substack{k\le x\\ (k,m)|g}} \mu(k)\frac{x^{k+1} (k,m)}{(d+1)k^{d+1}m} +R(x)= \frac{x^{d+1}}{(d+1)m}\sum_{g'|g}g'\sum_{\substack{k\le x\\ (k,m)=g'}} \frac{\mu(k)}{k^{d+1}} +R(x)$$ where $R(x)=O_m(x\log(x))$ if $d=1$ and $R(x)=O_{d,m}(x^d)$ if $d\ge 2$.\\
Observe that for all $g'|m$ we have $\{k\in\mathbb{N}: k\le x, (k,m)=g'\}=\{g'D : D\le x/g', (D, m/g')=1\}$. This allows us to write $$\sum_{\substack{k\le x\\ (k,m)=g'}} \frac{\mu(k)}{k^{d+1}}=\frac{1}{g'^{d+1}}\sum_{\substack{D\le x/g'\\ (D,m/g')=1}} \frac{\mu(g'D)}{D^{d+1}}=\frac{1}{g'^{d+1}}\sum_{\substack{D\le x/g'\\ (D,m/g')=1\\(D,g')=1}} \frac{\mu(g'D)}{D^{d+1}}=\frac{\mu(g')}{g'^{d+1}}\sum_{\substack{D\le x/g'\\ (D,m)=1}} \frac{\mu(D)}{D^{k+1}}$$ since $\mu(mn)=0$ if $(m,n)>1$. By Lemma \ref{lem:zeta} the sum is $\frac{m^{d+1}}{\zeta(d+1)J_{d+1}(m)}+O_d(g'^d/x^d).$ Hence $$\sum_{\substack{k\le x\\ (k,m)=g'}} \frac{\mu(k)}{k^{d+1}}= \frac{\mu(g')m^{d+1}}{\zeta(d+1)g'^{d+1}J_{d+1}(m)}+O_d(1).$$
Finally we plug this into the equation above and find \begin{align*}\sum_{\substack{n\le x\\ n\equiv a (m)}} J_d(n)&=\frac{m^dx^{d+1}}{(d+1)\zeta(d+1)J_{d+1}(m)}\sum_{g'|g}\frac{\mu(g')}{g'^d} + O_d(g)+R(x)\\
&=\frac{m^dJ_d(g)x^{d+1}}{(d+1)\zeta(d+1)g^dJ_{d+1}(m)} + O_{d,m}(1)+R(x)\end{align*}
since $J_d(g)=\sum_{g'|g}\mu(g')(g/g')^d$. The error term is $O_{m}(x\log(x))$ if $d=1$ and $O_{d,m}(x^d)$ if $d\ge 2$.
\end{proof}

\begin{lemma}\label{lem:conv}Let $d,n\in\mathbb{N}$. Then $\sum_{a=1}^n (a,n)J_d((a,n))= J_{d+1}(n)$.\end{lemma}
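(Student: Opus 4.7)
The plan is to group the sum by the value of $(a,n)$ and then identify the resulting Dirichlet convolution with $J_{d+1}$.

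First I would observe that, for each divisor $d'$ of $n$, the elements $a\in\{1,\dots,n\}$ with $(a,n)=d'$ are precisely those of the form $a=d'b$ with $1\le b\le n/d'$ and $(b,n/d')=1$; there are exactly $\varphi(n/d')$ of them. Grouping the sum by the value $d'=(a,n)$ therefore gives
$$\sum_{a=1}^n (a,n)\,J_d((a,n))=\sum_{d'\mid n}\varphi(n/d')\cdot d'\,J_d(d')=(\varphi\star f)(n),$$
where $f:=I_1\cdot J_d$ denotes the pointwise product.

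Next I would recognise $\varphi\star f$ as $J_{d+1}$. Using the standard identity $\varphi=\mu\star I_1$, I rewrite
$$\varphi\star f=\mu\star\bigl(I_1\star(I_1\cdot J_d)\bigr).$$
The key step is to collapse the inner convolution: since $I_1$ is completely multiplicative, the identity $f(g\star h)=(fg)\star(fh)$ recorded in the remark above, applied with $g=\epsilon$ and $h=J_d$, yields $I_1\star(I_1\cdot J_d)=I_1\cdot(\epsilon\star J_d)$. M\"obius inversion of $J_d=\mu\star I_d$ gives $\epsilon\star J_d=I_d$, so $I_1\star(I_1\cdot J_d)=I_1\cdot I_d=I_{d+1}$. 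Convolving with $\mu$ finally produces $\varphi\star f=\mu\star I_{d+1}=J_{d+1}$, by the very definition of Jordan's totient, which is the desired identity.

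There is essentially no obstacle here: the one mild subtlety is the trick of factoring $I_1$ out of the inner convolution by complete multiplicativity, but this identity is already recorded in the preceding remark, so the proof reduces to a short chain of algebraic manipulations in $\mathcal A$.
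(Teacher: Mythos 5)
Your proof is correct and follows essentially the same route as the paper: group the sum by the value of $(a,n)$ to obtain $\varphi\star(I_1\cdot J_d)$, then use the complete multiplicativity of $I_1$ to evaluate this convolution as $J_{d+1}$. The only cosmetic difference is the order in which you apply the distributive identity $f(g\star h)=(fg)\star(fh)$; the paper expands $J_d=\mu\star I_d$ first, whereas you collapse $I_1\star(I_1\cdot J_d)$ directly, but the computations are equivalent.
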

\begin{proof}Fix $g$ a divisor of $n$. If $1\le a\le n$ satisfies $(a,n)=g$, then $a=bg$ for some $b\le n/g$ coprime to $n/g$ and hence the number of such $a$ equals $\varphi(n/g)$. Thus we can write the sum in question as $\sum_{g|n}\varphi(n/g)gJ_d(g)$, which is nothing else than $((I_1 J_d)\star \varphi)(n)$. Thus we want to prove $(I_1J_d)\star\varphi = J_{d+1}$. Since $I_l$ is completely multiplicative we have \begin{align*}(I_1J_d)\star\varphi&= (I_1(I_d\star \mu))\star (I_1\star\mu)=I_{d+1}\star (I_1\mu)\star (I_1\epsilon)\star \mu\\
&=I_{d+1}\star\mu\star (I_1(\mu\star\epsilon))=J_{d+1}\star (I_1\eta) =J_{d+1}\star\eta = J_{d+1}.\qedhere\end{align*}\end{proof}

\begin{lemma}\label{lem:numberOfFixedOrder}Let $K$ be an algebraically closed field of characteristic $p$ and $d,n\in\mathbb{N}$. Then the number of $\boldsymbol\zeta\in \mathbb{G}_m^d(K)$ such that $\mathrm{ord}(\boldsymbol\zeta)=n$ is $J_d(n)$ if $(p\nmid n$ or $p=0)$ and is $0$ if $p|n$ and $p>0$.\end{lemma}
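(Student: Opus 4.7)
The plan is to reduce to the standard count of $n$-th roots of unity and then invert by the M\"obius function. I would handle the two characteristic cases separately: the degenerate case $p\mid n$ dies for group-theoretic reasons, and the main case follows from the separability of $X^n-1$.

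First I would dispose of the case $p>0$ and $p\mid n$. In an algebraically closed field of characteristic $p$ the equation $x^p=1$ rewrites as $(x-1)^p=0$, so $\mathbb G_m(K)$ has no nontrivial $p$-torsion. Hence no element of $\mathbb G_m^d(K)$ can have order divisible by $p$, giving the count $0$.

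For the main case (so $p=0$ or $p\nmid n$), set $A_d(m)=\#\{\boldsymbol\zeta\in\mathbb G_m^d(K):\mathrm{ord}(\boldsymbol\zeta)=m\}$ for $m\mid n$. The set $\{\boldsymbol\zeta\in\mathbb G_m^d(K):\boldsymbol\zeta^m=\mathbf 1\}$ is exactly $\mu_m(K)^d$, and under our hypothesis on $n$ (hence on every divisor $m$ of $n$) the polynomial $X^m-1$ is separable over $K$, so $\#\mu_m(K)=m$ and $\#\mu_m(K)^d=m^d=I_d(m)$. On the other hand, every element satisfying $\boldsymbol\zeta^m=\mathbf 1$ has order some divisor of $m$, so
\[
I_d(m)=\sum_{k\mid m}A_d(k)=(A_d\star\epsilon)(m)\qquad\text{for every }m\mid n.
\]
Applying M\"obius inversion (using $\mu\star\epsilon=\eta$) at $m=n$ yields $A_d(n)=(I_d\star\mu)(n)=J_d(n)$, as required.

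There is no substantive obstacle here; the only point to watch is that, in positive characteristic, the identity $(A_d\star\epsilon)(m)=I_d(m)$ only holds for $m$ coprime to $p$. Since we assume $p\nmid n$ every divisor of $n$ is automatically coprime to $p$, so the inversion step above, which involves only sums over divisors of $n$, is valid as written and produces the desired value $J_d(n)$.
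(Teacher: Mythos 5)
Your proposal is correct. The core of your argument in the main case is the same as the paper's: count the solutions of $\boldsymbol\zeta^m=\mathbf 1$ as $(\#\mu_m)^d$, observe that this equals $\sum_{k\mid m}A_d(k)$, and apply M\"obius inversion. You are also right that inversion at $n$ only uses the identity at divisors of $n$, so restricting to $m\mid n$ (where separability of $X^m-1$ is automatic) is legitimate. Where you genuinely diverge is in the case $p>0$, $p\mid n$: the paper runs the M\"obius inversion with the general value $B_d(n)=(\#\mu_n)^d$ and then factors the divisor sum over $n=p^\alpha m$ to extract the factor $\eta(p^\alpha)$, which vanishes when $p\mid n$; you instead kill this case at the outset by the direct group-theoretic observation that $\mathbb G_m(K)$ has no nontrivial $p$-torsion, hence no element of $\mathbb G_m^d(K)$ has order divisible by $p$. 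Your route is shorter and avoids the coprime factorization of the divisor lattice; the paper's route has the mild advantage of producing the closed formula $A_d(n)=\eta(p^\alpha)J_d(m)$ for arbitrary $n$ in one uniform computation. Either way the lemma follows.
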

\begin{proof}Note that for all $\boldsymbol\zeta\in\mathbb G_m^d$ we have $\boldsymbol\zeta^n=\mathbf 1$ if and only if $\mathrm{ord}(\boldsymbol\zeta)|n$. Let $A_{d}(n)$ be the number of elements in $\mathbb{G}_m^d$ of order $n$. The number of solutions $\boldsymbol\zeta\in\mathbb G_m^d$ of $\boldsymbol\zeta^n=\mathbf 1$ equals $(\#\mu_n)^d$. If $p=0$ we have $\#\mu_n=n$ and if $p>0$ and $n=p^\alpha m,p\nmid m$ we have $\zeta^n-1=(\zeta^m-1)^{p^\alpha}$ and thus $\zeta^n=1$ if and only if $\zeta^m=1$. Since $\zeta^m=1$ and its formal derivative $m\zeta^{m-1}$ have no common roots, the equation $\zeta^m=1$ has $m$ distinct roots. Therefore $\#\mu_{p^\alpha m}=m$. Let $B_{d}(n)=(\#\mu_n)^d$. Then counting solutions in two ways yields $B_{d}(n)=\sum_{m|n}A_{d}(m).$ So by M\"obius inversion we have $A_{d}(n)=\sum_{m|n}\mu(m)B_{d}(n/m)$.
If $p=0$ we have $B_d(n)=n^d=I_d(n)$ and therefore $A_d=\mu\star I_d=J_d$ and we are left to prove the lemma in positive characteristic.\\
So let us assume $p>0$ and write $n=p^\alpha m$ where $p\nmid m$. Since $p$ and $m$ are coprime we can factor any divisor of $n$ uniquely in a product of coprime divisors of $p^\alpha$ and $m$ respectively. Thus we find $$A_{d}(n)=\sum_{k|p^\alpha}\sum_{l|m}\mu(kl)B_{d}(n/(kl))=\left(\sum_{k|p^\alpha}\mu(k)\right)\left(\sum_{l|m}\mu(l)(m/l)^d\right)=\eta(p^\alpha)J_d(m)$$ using that $\mu\star \epsilon=\eta$. If $p|n$ then $p^\alpha\ne 1$ and hence $A_{d}(n)=0$ since $\eta(p^\alpha)=0$. And if $p\nmid n$ we have $p^\alpha=1, m=n$ and hence $A_{d}(n)=J_d(n)$.\end{proof}

\begin{definition}Let $\boldsymbol\zeta\in(\mathbb G_m^n)_{\mathrm{tors}}$ and $G<\mathbb G_m^n$ an algebraic subgroup. Then we define $$\mathrm{ord}(\boldsymbol\zeta G)=\min\{\mathrm{ord}(\boldsymbol\xi): \boldsymbol\xi \in \boldsymbol\zeta G \text{ of finite order}\}.$$\end{definition}

\begin{proof}[Proof of Theorem \ref{thm:countCoset}]Since a monoidal transform preserves orders, we can assume that the coset is of the form $\{\boldsymbol\eta\}\times\mathbb G_m^d$ for some $\boldsymbol\eta\in\mathbb G_m^{n-d}$ of order $g$ by Lemma \ref{lem:redTorsCoset}. Then the order of $(\boldsymbol{\eta,\zeta})$ equals $\mathrm{lcm}(g,\mathrm{ord}(\boldsymbol\zeta))=\frac{g}{(g,\mathrm{ord}(\boldsymbol\zeta))}\mathrm{ord}(\boldsymbol\zeta)$ for all $\boldsymbol\zeta\in\mathbb{G}_m^d$. Fix $1\le a\le g$ and assume that $\mathrm{ord}(\boldsymbol\zeta)\equiv a \mod g$ and $(\boldsymbol{\eta,\zeta})$ is of order bounded by $T$. Thus $g/(g,a)\mathrm{ord}(\boldsymbol\zeta)\le T$ and hence $\mathrm{ord}(\boldsymbol\zeta)\le T(g,a)/g$.\\
We first assume that $p=0$. Then by Lemma \ref{lem:numberOfFixedOrder} the number of elements of order $n$ in $\mathbb{G}_m^d$ is given by $J_d(n)$. We use Theorem \ref{thm:Jord} and Lemma \ref{lem:conv} to find \begin{align*}(\boldsymbol\eta G)_T&=\sum_{a=1}^g\sum_{\substack{h\le T(g,a)/g\\ h\equiv a (g)}} J_d(h)=\sum_{a=1}^g \frac{J_d((g,a))(g,a)T^{d+1}}{(d+1)\zeta(d+1)gJ_{d+1}(g)}+R(T)\\
&=\frac{T^{d+1}}{(d+1)\zeta(d+1)gJ_{d+1}(g)}\sum_{a=1}^g (g,a)J_k((g,a))+R(T)= \frac{T^{d+1}}{(d+1)\zeta(d+1)g} +R(T),\end{align*} where $R(T)=O_g(T\log(T))$ if $d=1$ and $R(T)=O_{d,g}(T^d)$ if $d\ge 2$. \\
For the case $p>0$ let us start with some remarks. Since $g$ is the order of $\boldsymbol\eta$ it is coprime to $p$. Thus there exists an integer $\bar p$ such that $p\bar p\equiv 1\; (\mathrm{mod } \;g)$. Then for natural numbers $h$ we have $h\equiv a \;(\mathrm{mod}\; g)$ and $h\equiv 0 \;(\mathrm{mod} \;p)$ if and only if $h\equiv ap\bar p\; (\mathrm{mod} \; gp)$. Let us also compute $(a\bar p p, gp)=p(a\bar p, g)=p(a,g)$ since $(\bar p,g)=1$. By Lemma \ref{lem:numberOfFixedOrder} there are also $J_d(n)$ torsion points of order $n$ if $(n,p)=1$. Otherwise there are none. Thus we have to subtract \begin{align*}\sum_{a=1}^g \sum_{\substack{h\le T(g,a)/g\\h\equiv a (g)\\h\equiv 0(p)}}J_k(h)&=\sum_{a=1}^g \sum_{\substack{h\le T(g,a)/g\\h\equiv ap\bar p (pg)}}J_d(h)=\sum_{a=1}^g \frac{J_d(p(g,a))(g,a)T^{d+1}}{(d+1)\zeta(d+1)gJ_{d+1}(gp)}+R(T)\\
&=\frac{J_d(p)T^{d+1}}{J_{d+1}(p)(d+1)\zeta(d+1)gJ_{d+1}(g)}\sum_{a=1}^g (g,a)J_d((g,a))+R(T)\\
&= \frac{J_{d}(p)T^{d+1}}{J_{d+1}(p)(d+1)\zeta(d+1)g} +R(T)\end{align*} from the term in characteristic $0$ above. Here $R(T)$ is as above and we used again Theorem \ref{thm:Jord} for the second and Lemma \ref{lem:conv} for the last equation.
Since $1-\frac{J_d(p)}{J_{d+1}(p)}=\frac{p^{d+1}-1-(p^d-1)}{p^{d+1}-1}=\frac{p-1}{p-p^{-d}}$ the claimed result follows.\end{proof}

\begin{remark}Let $S_{a,m,d,T}=\sum_{\substack{n\le T\\ n\equiv a(m)}}J_d(n)$. Then we have $$\lim_{T\to\infty} \frac{S_{a,m,d,T}}{S_{b,m,d,T}}=\frac{\prod_{p|(a,m)}(1-p^{-d})}{\prod_{p|(b,m)}(1-p^{-d})}.$$ In particular if we compare the contribution in congruence classes mod $p$, the proportions are $1:\dots:1:(1-p^{-d})$ and hence the proportion of all the coprime congruence classes is $(p-1)/(p-p^{-d})$, which coincides with the factor found in Theorem \ref{thm:countCoset}.\end{remark}

\section{Outside torsion cosets}\label{sec:out}
We count the torsion points which are not contained in a positive dimensional torsion coset. We use Minkowski's second theorem to construct a one dimensional subgroup which is defined by equations with small coefficients. In the first part we estimate the number of torsion points lying in such a subgroup, and then sum over all such groups. As usual $K$ always denotes an algebraically closed field.
\begin{definition}Let $n,m\in\mathbb{N}$ and $A\in\mathrm{Mat}_{n\times m}(\mathbb{Z})$. Then we call a factorisation $A=TDS$ Smith normal form of $A$ if $S\in\mathrm{GL}_n(\mathbb Z), T\in\mathrm{GL}_m(\mathbb{Z})$ and $D=\mathrm{diag}_{n\times m}(\boldsymbol\alpha)$ for some $\boldsymbol\alpha\in\mathbb{Z}_{\ge 0}^n$ such that $\alpha_{i+1}\mathbb Z\subset \alpha_{i}\mathbb Z$ for all $1\le i < \min\{n,m\}$.\end{definition}

\begin{remark}That every matrix $A\in\mathrm{Mat}_{n\times m}(\mathbb{Z})$ has a Smith normal form is a classical result established in \cite{SNF}.\end{remark}

\begin{definition}Let $P\in K[X_1^{\pm 1},\dots,X_n^{\pm 1}]\setminus\{0\}$ be a Laurent polynomial. There exists a unique polynomial $Q\in K[X_1,\dots X_n]$ which is coprime to $X_1\cdots X_n$ and $\mathbf v\in\mathbb Z^n$ such that $P=(X_1,\dots,X_n)^{\mathbf v}Q$. We define the Laurent degree as $\mathrm{Ldeg}(P)=\deg(Q)$.\end{definition}

\begin{remark}Since the zero locus of $P$ and $Q$ above in $\mathbb G_m^n$ are equal, a Laurent polynomial $P\in K[X^{\pm 1}]\setminus\{0\}$ of degree $d=\mathrm{Ldeg}(P)$ has at most $d$ roots.\end{remark}

\begin{lemma}\label{lem:Ldeg}Let $P\in K[X_1^{\pm 1},\dots, X_n^{\pm1 }]$, $\mathbf y\in\mathbb G_m^n$ and $\mathbf u\in\mathbb Z^n$. Suppose that $P\left(\mathbf y Y^{\mathbf u^\top}\right)\ne 0$. Then the Laurent degree of $P\left(\mathbf y Y^{\mathbf u^\top}\right)\in K[Y^{\pm 1}]$ is bounded by $2|\mathbf u|\mathrm{Ldeg}(P)$.\end{lemma}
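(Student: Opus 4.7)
The plan is to reduce to a genuine polynomial by peeling off a monomial factor, then track how the support is transported under the substitution $\mathbf X\mapsto \mathbf y Y^{\mathbf u^\top}$ and bound the spread of the resulting one-variable exponents.

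First I would write $P=\mathbf X^{\mathbf v}Q$ as in the definition of Laurent degree, so that $Q\in K[X_1,\dots,X_n]$ is coprime to $X_1\cdots X_n$ and $\deg Q=\mathrm{Ldeg}(P)=:d$. Then $\mathrm{Supp}(P)=\mathbf v+\mathrm{Supp}(Q)$, and every $\mathbf i'\in \mathrm{Supp}(Q)$ satisfies $\mathbf i'\in \mathbb Z_{\ge 0}^n$ together with $i'_1+\cdots+i'_n\le d$.

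Next I substitute. A monomial $p_{\mathbf i}\mathbf X^{\mathbf i}$ in $P$ becomes $p_{\mathbf i}\mathbf y^{\mathbf i}Y^{\langle \mathbf u,\mathbf i\rangle}$, so every exponent of $Y$ appearing in $P(\mathbf y Y^{\mathbf u^\top})$ lies in the set $\{\langle \mathbf u,\mathbf v+\mathbf i'\rangle:\mathbf i'\in \mathrm{Supp}(Q)\}$. Let $M$ and $m$ denote the maximum and minimum exponents of $Y$ that actually occur (they exist because the substituted polynomial is assumed nonzero). Then $\mathrm{Ldeg}(P(\mathbf y Y^{\mathbf u^\top}))=M-m$, and this is bounded by
\[
\max_{\mathbf i',\mathbf j'\in \mathrm{Supp}(Q)}\langle \mathbf u,\mathbf i'-\mathbf j'\rangle,
\]
the shift $\mathbf v$ cancelling out.

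Finally I bound that quantity. For every $\mathbf i'\in \mathrm{Supp}(Q)$, using $\mathbf i'\in \mathbb Z_{\ge 0}^n$ and $\sum_k i'_k\le d$, one has $|\langle \mathbf u,\mathbf i'\rangle|\le |\mathbf u|\sum_k i'_k\le |\mathbf u|d$. Applying this to both $\mathbf i'$ and $\mathbf j'$ gives $\langle \mathbf u,\mathbf i'-\mathbf j'\rangle\le 2|\mathbf u|d$, as required.

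The step I expect to require the most care is the very first one: making sure the monomial prefactor $\mathbf X^{\mathbf v}$ really does drop out of the analysis, so that the bound depends only on $\mathrm{Ldeg}(P)$ rather than on the total multidegree of $P$ as a Laurent polynomial. Once one records that differences of support vectors of $P$ coincide with differences of support vectors of $Q$, the rest is a one-line estimate, and the factor $2$ in the bound has a transparent origin (positive vs.\ negative parts of $\mathbf u$).
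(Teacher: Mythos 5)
Your proof is correct and follows essentially the same route as the paper: peel off the monomial factor $\mathbf X^{\mathbf v}$, note that the exponents of $Y$ in the substituted polynomial have the form $\mathbf u^\top\mathbf v+\mathbf u^\top\mathbf j$ with $\mathbf j\in\mathrm{Supp}(Q)$ so the shift cancels in the difference of the extreme exponents, and bound $|\mathbf u^\top\mathbf j|\le|\mathbf u|\deg(Q)$ using that $\mathbf j$ has nonnegative entries summing to at most $\mathrm{Ldeg}(P)$. The only cosmetic quibble is your closing aside: the factor $2$ comes from the triangle inequality applied to the two extreme support points, not from splitting $\mathbf u$ into positive and negative parts.
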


\begin{proof}Let $R\in K[Y]\setminus\{0\}$, $s_0=\min \mathrm{Supp}(R)$ and $s_1=\max \mathrm{Supp}(R)$. Then we have $\mathrm{Ldeg}(R)=s_1-s_0$.
By definition there exists $\mathbf v\in\mathbb Z^n$ and $Q\in K[X_1,\dots, X_n]$ of degree $\mathrm{Ldeg}(P)$ such that $P=(X_1,\dots,X_n)^{\mathbf v}Q$. Let $R=P\left(\mathbf y Y^{\mathbf u^\top}\right)$ and suppose that $s\in\mathrm{Supp}(R)$. Then there exists $\mathbf i\in \mathrm{Supp}(P)$ such that $s=\mathbf{ u^\top i}$ and hence $\mathbf j\in\mathrm{Supp}(Q)$ such that $s=\mathbf{u^\top v+u^\top j}$. We can bound $|\mathbf{u^\top j}|\le \sum_{i=1}^n |u_i|j_i\le |\mathbf u|\sum_{i=1}^n j_i\le |\mathbf u|\deg(Q)$. Thus there exist $r_0, r_1\in\mathbb Z$ bounded by $|\mathbf u|\deg(Q)$ such that $s_i=\mathbf{u^\top v}+r_i, i=0,1$. Then we find $\mathrm{Ldeg}(R)=s_1-s_0=r_1-r_0\le 2|\mathbf u| \mathrm{Ldeg}(P)$.\end{proof}

\begin{definition}We call a set $X\subset\mathbb{G}_m^n$ algebraic over $K$ if there exists polynomials $P_1,\dots, P_m\in K[X_1^{\pm 1},\dots, X_n^{\pm 1}]$ such that $$X=\{\mathbf x\in\mathbb{G}_m^n: P_1(\mathbf x)=\dots=P_m(\mathbf x)=0\}.$$ Let $X\subset \mathbb{ G}_m^n$ be an algebraic subset of dimension $d$. We call it admissible if it is finite or $d\ge 1$ and it does not contain a torsion coset of dimension $d$.
We define $$X^*=X\setminus \bigcup_{\substack{C\subset X \text{ torsion coset}\\ \dim(C)\ge 1}}C.$$
For $P\in K[X_1^{\pm 1},\dots,X_n^{\pm 1}]$ we let $Z(P)=\{\mathbf x\in\mathbb{G}_m^n(K): P(\mathbf x)=0\}$ be the zero locus of $P$ in $\mathbb{G}_m^n.$\end{definition}

\begin{lemma}\label{lem:fixedA}Let $n\ge 2$ and $\mathbf a_1,\dots\mathbf a_{n-1}\in\mathbb{Z}^n$ be linearly independent and $P\in K[X_1^{\pm 1},\dots,X_n^{\pm 1}]\setminus\{0\}$ and $X=Z(P)$. Then $$\#\left(X^*\cap\bigcap_{i=1}^{n-1}\left\{\mathbf x\in\mathbb{G}_m^n: \mathbf x^{\mathbf a_i}=1\right\}\right)\le 2(n-1)!\mathrm{Ldeg}(P)|\mathbf a_1|\cdots |\mathbf a_{n-1}|.$$\end{lemma}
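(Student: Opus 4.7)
The plan is to use Smith normal form to parameterise the algebraic subgroup cut out by $\mathbf x^{\mathbf a_1}=\cdots=\mathbf x^{\mathbf a_{n-1}}=1$ as a finite union of cosets of a one-dimensional subtorus $G$, and then estimate the contribution of each coset via Lemma \ref{lem:Ldeg}.

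First I would assemble $\mathbf a_1,\dots,\mathbf a_{n-1}$ as the columns of a matrix $A\in\mathrm{Mat}_{n\times(n-1)}(\mathbb Z)$ of rank $n-1$, and write $A=UDV$ with $U\in\mathrm{GL}_n(\mathbb Z)$, $V\in\mathrm{GL}_{n-1}(\mathbb Z)$ and $D=\mathrm{diag}_{n\times(n-1)}(\alpha_1,\dots,\alpha_{n-1})$, $\alpha_i>0$. After the unimodular substitution $\mathbf y=\mathbf x^U$ the system reduces to $\mathbf y^D=\mathbf 1$, i.e. $y_i^{\alpha_i}=1$ for $i\le n-1$ while $y_n\in\mathbb G_m$ is free. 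Transporting back through $\mathbf x=\mathbf y^{U^{-1}}$ and letting $\mathbf u\in\mathbb Z^n$ denote the $n$-th row of $U^{-1}$ gives
$$\Bigl\{\mathbf x\in\mathbb G_m^n:\mathbf x^{\mathbf a_i}=1,\ i=1,\dots,n-1\Bigr\}=\bigcup_{\boldsymbol\zeta}\boldsymbol\xi_{\boldsymbol\zeta}\,G,\qquad G=\{t^{\mathbf u^\top}:t\in\mathbb G_m\},$$
a union of at most $\alpha_1\cdots\alpha_{n-1}$ cosets, each $\boldsymbol\xi_{\boldsymbol\zeta}$ being a torsion point produced from the $\zeta_i$ via the integer exponents in $U^{-1}$.

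Next I would bound the contribution of a single coset $\boldsymbol\xi G$. If $P(\boldsymbol\xi\,t^{\mathbf u^\top})\equiv 0$ as a Laurent polynomial in $t$, then the whole torsion coset $\boldsymbol\xi G$ lies in $X$ and therefore meets $X^*$ in nothing. Otherwise Lemma \ref{lem:Ldeg} gives $\mathrm{Ldeg}(P(\boldsymbol\xi\,t^{\mathbf u^\top}))\le 2|\mathbf u|\mathrm{Ldeg}(P)$, so at most $2|\mathbf u|\mathrm{Ldeg}(P)$ points on that coset lie in $X$. Summing over cosets yields
$$\#\Bigl(X^*\cap\bigcap_{i=1}^{n-1}\{\mathbf x:\mathbf x^{\mathbf a_i}=1\}\Bigr)\le 2\,\alpha_1\cdots\alpha_{n-1}\cdot|\mathbf u|\cdot\mathrm{Ldeg}(P),$$
so it remains to prove $\alpha_1\cdots\alpha_{n-1}\cdot|\mathbf u|\le (n-1)!\,|\mathbf a_1|\cdots|\mathbf a_{n-1}|$.

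The main obstacle, and the arithmetic heart of the argument, is this last inequality. The vector $\mathbf u$ is primitive because it is a row of the unimodular matrix $U^{-1}$, and from $U^{-1}A=DV$ together with the fact that the first $n-1$ rows of $DV$ are linearly independent one sees that $\mathbf u$ generates the rank-one lattice $L=\{\mathbf v\in\mathbb Z^n:\mathbf v^\top A=\mathbf 0\}$. The signed cofactor vector $\mathbf w$ with entries $w_k=(-1)^{k+n}\det A^{(k)}$, where $A^{(k)}$ denotes $A$ with the $k$-th row removed, also lies in $L$: appending any $\mathbf a_i$ as an $n$-th column to $A$ produces a singular $n\times n$ matrix whose Laplace expansion along that column reads $\sum_k w_k a_{k,i}=0$. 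Writing $\mathbf w=d\mathbf u$, the integer $|d|$ equals the gcd of the $(n-1)\times(n-1)$ minors of $A$, which is precisely the $(n-1)$-st determinantal divisor $\alpha_1\cdots\alpha_{n-1}$. The elementary Leibniz bound $|\det A^{(k)}|\le (n-1)!\prod_i|\mathbf a_i|$ then gives $|\mathbf w|\le (n-1)!\prod_i|\mathbf a_i|$, and since $|\mathbf w|=\alpha_1\cdots\alpha_{n-1}\cdot|\mathbf u|$ the claimed estimate follows.
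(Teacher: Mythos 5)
Your proposal is correct and follows essentially the same route as the paper: Smith normal form to write the group $\{\mathbf x:\mathbf x^A=\mathbf 1\}$ as at most $\alpha_1\cdots\alpha_{n-1}$ cosets of the one-dimensional torus $\{t^{\mathbf u^\top}\}$, the dichotomy between identically vanishing restrictions (which miss $X^*$) and restrictions of Laurent degree at most $2|\mathbf u|\mathrm{Ldeg}(P)$, and the cofactor-vector/Leibniz argument identifying $\alpha_1\cdots\alpha_{n-1}\,|\mathbf u|$ with the sup-norm of the vector of $(n-1)$-minors. The paper packages the last step as $|\mathbf u_n|\le (n-1)!\,|\mathbf a_1|\cdots|\mathbf a_{n-1}|/d_{n-1}(A)$ and then multiplies by the coset count, which is the same computation.
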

\begin{proof}A suitable version of Bézouts Theorem would suffice for our purposes. However we follow a more elementary and self-contained approach. Let $A=\begin{pmatrix} \mathbf a_{1},\dots, \mathbf a_{n-1}\end{pmatrix}\in\mathrm{Mat}_{n\times (n-1)}(\mathbb{Z})$. Let $A=SDT$ be a Smith normal form of $A$ with $D=\mathrm{diag}_{n\times (n-1)}(\alpha_1,\dots,\alpha_{n-1})$ and note that $\alpha_1,\dots,\alpha_{n-1}\ge 1$. Let $\mathbf u_1,\dots,\mathbf u_n\in\mathbb{Z}^n$ such that $S^{-1}=\begin{pmatrix} \mathbf u_{1}^\top\\ \vdots\\ \mathbf u_{n}^\top\end{pmatrix}$. We claim that $$\bigcap_{i=1}^{n-1}\left\{\mathbf x\in\mathbb{G}_m^n: \mathbf x^{\mathbf a_i}=1\right\}=\left\{y_1^{\mathbf u_{1}^\top}\cdots y_n^{\mathbf u_{n}^\top}: y_i^{\alpha_i}=1\text{ for }i=1,\dots, n-1\text{ and } y_n\in\mathbb{G}_m^n\right\}.$$ Note that $\mathbf x\in\mathbb{G}_m^n$ is in the intersection on the left if and only if $\mathbf x^A=\mathbf{1}$ and since $T$ has an inverse in $\mathrm{Mat}_{n-1}(\mathbb{Z})$ this is equivalent to $\mathbf x^{SD}=\mathbf 1$. Let $\mathbf{ y=x}^S$. Then $\mathbf y^D=\mathbf{1}$ if and only if $y_i^{\alpha_i}=1$ for $i=1,\dots,n-1$. There is no condition on $y_n$. Therefore any $\mathbf x\in\mathbb{G}_m^n$ such that $\mathbf x^{SD}=\mathbf 1$ is of the form $\mathbf{x=y}^{S^{-1}}=y_1^{\mathbf u_{1}^\top}\cdots y_n^{\mathbf u_{n}^\top}$ where $y_i^{\alpha_i}=1,i<n$ and $y_n\in\mathbb{G}_m.$
Note that $\mathbf u_n$ is primitive since it is a row of $S^{-1}\in\mathrm{GL}_n(\mathbb{Z})$. We also find $$\mathbf u_n^\top A=\mathbf e_n^\top S^{-1}A=\mathbf e_n^\top DT=\mathbf 0.$$ Since the kernel of $A^\top$ is one dimensional this shows that primitivity and being in the kernel of $A^\top$ determines $\mathbf u_n$ up to sign.\\
Let $g=d_{n-1}(A)$ and $v_i=\det(\mathbf e_i, A)/g\in\mathbb{Z}.$ Since $\gcd(v_1,\dots, v_n)=d_{n-1}(A)/g=1$, the vector $\mathbf v=(v_1,\dots, v_n)$ is primitive. Then for any $j=1,\dots, n-1$ we have $$\mathbf a_j^\top \mathbf v = a_{1,j}\det(\mathbf e_1, A)/g +\dots +a_{n,j}\det(\mathbf e_n, A)/g= \det(\mathbf a_j, A)/g=0$$ and hence $|\mathbf u_n|=|\mathbf v|\le (n-1)! |\mathbf a_1|\cdots |\mathbf a_{n-1}|/g$ by the Leibniz formula. Lemmas \ref{lem:diInv} and \ref{lem:diDiag} imply that $g=d_{n-1}(D)=\alpha_1\cdots\alpha_{n-1}$.\\
Now fix $(y_1,\dots,y_{n-1})\in\mu_{\alpha_1}\times\dots\times \mu_{\alpha_{n-1}}$. If $P\left(y_1^{\mathbf u_{1}^\top}\cdots y_{n-1}^{\mathbf u_{n-1}^\top}Y^{\mathbf u_{n}^\top}\right)= 0$, then for all $y_n\in\mathbb{G}_m$ the element $(y_1^{\mathbf u_{1}^\top}\cdots y_n^{\mathbf u_{n}^\top})$ is contained in a torsion coset of positive dimension and hence $\{y_1^{\mathbf u_{1}^\top}\cdots y_n^{\mathbf u_{n}^\top}: y_n\in\mathbb{G}_m\}\cap X^*=\emptyset$.\\
In the other case an element $\mathbf x\in \{y_1^{\mathbf u_{1}^\top}\cdots y_n^{\mathbf u_{n}^\top}: y_n\in\mathbb{G}_m\}\cap X$ is of the form  $y_1^{\mathbf u_{1}^\top}\cdots y_n^{\mathbf u_{n}^\top}$ for some root $y_n\in\mathbb{G}_m$ of the Laurent polynomial $Q(Y)=P\left(y_1^{\mathbf u_{1}^\top}\cdots y_{n-1}^{\mathbf u_{n-1}^\top}Y^{\mathbf u_{n}^\top}\right)$. By Lemma \ref{lem:Ldeg} we have $\mathrm{Ldeg}(Q)\le 2|\mathbf u_n|\mathrm{Ldeg}(P)$. Since $|\mathbf u_n|\le (n-1)! |\mathbf a_1|\cdots |\mathbf a_{n-1}|/g$ we have $$\#\left(\left\{y_1^{\mathbf u_{1}^\top}\cdots y_n^{\mathbf u_{n}^\top}: y_n\in\mathbb{G}_m^n\right\}\cap X\right)\le 2(n-1)!\mathrm{Ldeg}(P)|\mathbf a_1|\cdots |\mathbf a_{n-1}|/g.$$ Summing over all $(y_1,\dots,y_{n-1})\in\mu_{\alpha_1}\times\dots\times \mu_{\alpha_{n-1}}$ - which are at most $g$ distinct elements - we get the claimed bound.\end{proof}

\begin{lemma}\label{lem:sum1}Let $n\ge 2, \alpha_1,\dots ,\alpha_{n-1}\in [0,1),\beta\in(-1,\infty)$ and $A=\prod_{i=1}^{n-1}(1-\alpha_i)$. Then for all $T\ge 1$ we have $$\sum_{k_1\le T^{\alpha_1}}\cdots \sum_{k_i\le (T/(k_1\cdots k_{i-1}))^{\alpha_i}}\cdots \sum_{k_{n-1}\le (T/(k_1\cdots k_{n-2}))^{\alpha_{n-1}}} (k_1\cdots k_{n-1})^\beta\ll_{\boldsymbol\alpha,\beta} T^{(\beta+1)(1-A)}. $$\end{lemma}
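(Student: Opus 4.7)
My approach is induction on $n \ge 2$.

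For the base case $n = 2$, the claim reduces to $\sum_{k_1 \le T^{\alpha_1}} k_1^\beta \ll_\beta (T^{\alpha_1})^{\beta+1} = T^{\alpha_1(\beta+1)}$, which is the standard single-variable bound $\sum_{k \le X} k^\beta \ll_\beta X^{\beta+1}$ valid for $X \ge 1$ whenever $\beta > -1$. The right-hand exponent $(\beta+1)(1-A)$ with $A = 1-\alpha_1$ is exactly $\alpha_1(\beta+1)$, so the base case is immediate.

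For the inductive step I peel off the innermost sum. Fixing $k_1, \ldots, k_{n-2}$ and applying the same single-variable estimate yields
$$\sum_{k_{n-1} \le (T/(k_1 \cdots k_{n-2}))^{\alpha_{n-1}}} k_{n-1}^\beta \ll_\beta \left(\frac{T}{k_1 \cdots k_{n-2}}\right)^{\alpha_{n-1}(\beta+1)}.$$
After factoring $T^{\alpha_{n-1}(\beta+1)}$ out, the remaining $n-2$ nested sums have exactly the same shape as the lemma's statement with $n$ replaced by $n-1$ and $\beta$ replaced by $\beta' := \beta - \alpha_{n-1}(\beta+1) = (\beta+1)(1-\alpha_{n-1}) - 1$. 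Since $\alpha_{n-1} < 1$ and $\beta + 1 > 0$, we have $\beta' > -1$, so the induction hypothesis applies with parameters $\alpha_1, \ldots, \alpha_{n-2}$ and $\beta'$, giving a bound $\ll_{\boldsymbol\alpha, \beta} T^{(\beta'+1)(1-A')}$ where $A' = \prod_{i=1}^{n-2}(1-\alpha_i)$.

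The total exponent is then $\alpha_{n-1}(\beta+1) + (\beta'+1)(1-A')$. Using $\beta' + 1 = (\beta+1)(1-\alpha_{n-1})$ and $A = (1-\alpha_{n-1})A'$, this collapses to
$$\alpha_{n-1}(\beta+1) + (\beta+1)(1-\alpha_{n-1})(1-A') = (\beta+1)\bigl[\alpha_{n-1} + (1-\alpha_{n-1}) - (1-\alpha_{n-1})A'\bigr] = (\beta+1)(1-A),$$
which is the desired exponent. The only subtlety is ensuring the standard bound $\sum_{k \le X} k^\beta \ll_\beta X^{\beta+1}$ is applied with $X \ge 1$; an easy subsidiary induction on $i$ shows that throughout the summation region $k_1 \cdots k_i \le T$, because $k_i \le (T/(k_1\cdots k_{i-1}))^{\alpha_i}$ yields $k_1 \cdots k_i \le (k_1\cdots k_{i-1})^{1-\alpha_i} T^{\alpha_i} \le T^{1-\alpha_i} \cdot T^{\alpha_i} = T$, so each inner upper bound $(T/(k_1\cdots k_{i-1}))^{\alpha_i}$ is at least $1$ (and when such a bound is less than $1$ the corresponding sum is empty and contributes nothing). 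This minor bookkeeping is the main, though mild, obstacle; once it is settled, the algebraic simplification above closes the induction.
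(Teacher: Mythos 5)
Your proof is correct and follows essentially the same strategy as the paper's: induction on $n$ combined with the elementary estimate $\sum_{k\le X}k^\beta\ll_\beta X^{\beta+1}$, closing with the same exponent algebra. The only difference is the direction of peeling --- the paper removes the \emph{outermost} sum and applies the induction hypothesis to the inner block with $T$ replaced by $T/k_1$ (and must then verify $\beta-(\beta+1)(1-A')>-1$), whereas you remove the \emph{innermost} sum and renormalize $\beta$ to $\beta'=(\beta+1)(1-\alpha_{n-1})-1$; both variants are equally valid, and your verification that every upper limit is at least $1$ is, if anything, slightly more careful than the paper's.
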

\begin{proof}Comparing the sum with the integral yields that $\sum_{k\le x}k^\alpha \ll_\alpha x^{\alpha+1}$ as $x\to \infty$ for all real $\alpha >-1$. The proof is by induction on $n$. If $n=2$ we have $$\sum_{k\le T^{\alpha}} k^\beta\ll_\beta T^{\alpha(\beta+1)}=T^{(\beta+1)(1-(1-\alpha))}.$$So assume that $n\ge 3$ and the lemma holds for all $m\le n-1$. Let $A'=\prod_{i=2}^{n-1} (1-\alpha_i)$. Then \begin{align*}&\sum_{k_1\le T^{\alpha_1}}\cdots \sum_{k_i\le (T/(k_1\cdots k_{i-1}))^{\alpha_i}}\cdots \sum_{k_{n-1}\le (T/(k_1\cdots k_{n-2}))^{\alpha_{n-1}}} (k_1\cdots k_{n-1})^\beta\\
&\qquad =\sum_{k_1\le T^{\alpha_1}}k_1^\beta \sum_{k_2\le (T/k_1)^{\alpha_i}}\cdots \sum_{k_{n-1}\le ((T/k_1)/(k_2\cdots k_{n-2}))^{\alpha_{n-1}}} (k_2\cdots k_{n-1})^\beta\\
&\qquad \ll_\beta \sum_{k_1\le T^{\alpha_1}}k_1^\beta (T/k_1)^{(\beta+1)(1-A')}=T^{(\beta+1)(1-A')}\sum_{k_1\le T^{\alpha_1}}k_1^{\beta-(\beta+1)(1-A')}\end{align*} by induction since $T/k_1\ge T^{1-\alpha_1}\ge 1$.\\
We claim that the exponent $\beta-(\beta+1)(1-A')>-1$. This is equivalent to $\beta+1>(\beta+1)(1-A')$ and since $\beta>-1$ also to $A'>0$ which is true since the $\alpha_i$ are in $[0,1)$. Thus we can bound $$\sum_{k_1\le T^{\alpha_1}}k_1^{\beta-(\beta+1)(1-A')}\ll_{\boldsymbol\alpha,\beta} T^{\alpha_1(\beta+1-(\beta+1)(1-A'))}=T^{\alpha_1(\beta+1)A'}.$$
So the exponent in the bound equals $$(\beta+1)(1-A')+(\beta+1)\alpha_1A'=(\beta+1)(1-A'(1-\alpha_1))=(\beta+1)(1-A).\qedhere$$\end{proof}

\begin{definition}Let $F_n=[-1,1]^n$ and $\Lambda\subset \mathbb{Z}^n$ a subgroup of rank $n$. Then the successive minima of $\Lambda$ with respect to $F_n$ are defined as \begin{align*}\lambda_i(\Lambda)&=\inf\{\lambda\in \mathbb{R}_{\ge 0}: \lambda F_n\text{ contains } i \text{ linear independent lattice points of }\Lambda \}\\
&=\inf\{\lambda\in\mathbb{R}_{\ge 0}: \text{ there exist }i \text{ linear indepedent points of }\Lambda\text{ of norm at most }\lambda\}\end{align*} where the norm of a vector $\mathbf v\in\mathbb{Z}^n$ is given by $|\mathbf v|$.\end{definition}
\begin{lemma}\label{lem:Mink2}Let $n\in\mathbb N$ and $\Lambda\subset \mathbb{Z}^n$ be a subgroup of rank $n$. Then $\lambda_1(\Lambda)\cdots\lambda_n(\Lambda)\le [\mathbb Z^n:\Lambda]$.\end{lemma}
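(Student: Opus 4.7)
The plan is to recognise Lemma \ref{lem:Mink2} as a direct specialisation of Minkowski's second theorem from the geometry of numbers. Recall that Minkowski's second theorem asserts: for any full-rank lattice $\Lambda'\subset\mathbb R^n$ and any symmetric convex body $K\subset\mathbb R^n$ with non-empty interior, the successive minima $\mu_1\le\cdots\le\mu_n$ of $\Lambda'$ with respect to $K$ satisfy
$$\mu_1\cdots\mu_n\cdot\mathrm{vol}(K)\le 2^n\,\mathrm{covol}(\Lambda').$$
In our setting $K=F_n=[-1,1]^n$ and $\Lambda'=\Lambda\subset\mathbb Z^n$.

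First I would dispatch the two volume computations. The cube $F_n$ has Lebesgue volume exactly $2^n$, and any rank-$n$ sublattice $\Lambda\subset\mathbb Z^n$ has covolume $[\mathbb Z^n:\Lambda]$: indeed, write $\Lambda=M\mathbb Z^n$ for some $M\in\mathrm{Mat}_n(\mathbb Z)$; then $\mathrm{covol}(\Lambda)=|\det M|=[\mathbb Z^n:\Lambda]$, since $\mathbb Z^n$ itself has covolume $1$. Feeding these values into Minkowski's second theorem, the factors $2^n$ on each side cancel and one obtains precisely
$$\lambda_1(\Lambda)\cdots\lambda_n(\Lambda)\le[\mathbb Z^n:\Lambda],$$
which is the claim.

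The only non-trivial ingredient is Minkowski's second theorem itself, which I would not reprove. I would cite a standard reference such as Cassels' \emph{An Introduction to the Geometry of Numbers} (Chapter VIII) or Schmidt's \emph{Diophantine Approximation}. In this sense the ``main obstacle'' is really external to the proof: once Minkowski's second theorem is invoked, the specialisation to the cube $F_n$ and to a sublattice of $\mathbb Z^n$ is a one-line volume computation. Note also that the convention for successive minima used in the definition (linearly independent lattice points contained in $\lambda F_n$, i.e.\ of $\ell^\infty$-norm at most $\lambda$) matches exactly the convention in Minkowski's theorem, so no rescaling is needed.
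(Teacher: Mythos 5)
Your proposal is correct and follows exactly the paper's own argument: invoke Minkowski's second theorem for the cube $F_n=[-1,1]^n$ of volume $2^n$, and identify the covolume of the full-rank sublattice $\Lambda\subset\mathbb Z^n$ with the index $[\mathbb Z^n:\Lambda]$ via the determinant of a basis matrix. The reference you suggest (Cassels) is in fact the one the paper cites.
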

\begin{proof}This follows from Minkowski's second theorem (Theorem V on page 218 in \cite{M2}) and the fact that the volume of $F_n$ equals $2^n$. Furter the determinant of $\Lambda$ is given by the absolute value of the determinant of a matrix $B$ whose columns form a basis of $\Lambda$. Since $\det(\Lambda)=|\det(B)|=[\mathbb Z^n:B\mathbb Z^n]=[\mathbb Z^n: \Lambda]$ the determinant is given by $[\mathbb Z^n:\Lambda]$.\end{proof}

\begin{definition}For $\boldsymbol\zeta\in(\mathbb{G}_m^n)_{\mathrm{tors}}$ we define $\Lambda_{\boldsymbol\zeta}=\left\{\mathbf a\in\mathbb{Z}^n:\boldsymbol\zeta^{\mathbf a}=1\right\}$.\end{definition}

\begin{lemma}\label{lem:lambdaZeta}Let $\boldsymbol\zeta\in(\mathbb{G}_m^n)_{\mathrm{tors}}$ be of  order $N$. Then $\Lambda_{\boldsymbol\zeta}$ is a subgroup of $\mathbb Z^n$ of rank $n$ and $[\mathbb Z^n:\Lambda_{\boldsymbol\zeta}]=N$.\end{lemma}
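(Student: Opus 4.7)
The plan is to realise $\Lambda_{\boldsymbol\zeta}$ as the kernel of a single group homomorphism and then compute the size of the image. Define $\phi:\mathbb Z^n\to K^*$ by $\phi(\mathbf a)=\boldsymbol\zeta^{\mathbf a}=\zeta_1^{a_1}\cdots\zeta_n^{a_n}$. The compatibility of the notation $\mathbf x^{A}$ with addition of exponent vectors (contained in the identity $\mathbf x^{A+B}=\mathbf x^{A}\mathbf x^{B}$, a special case of the multiplicativity recorded in Section~\ref{sec:not}) shows that $\phi$ is a group homomorphism. Then $\Lambda_{\boldsymbol\zeta}=\ker\phi$, which is the first claim: $\Lambda_{\boldsymbol\zeta}$ is a subgroup of $\mathbb Z^n$.

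Next I would observe that $\phi(N\mathbf e_i)=\zeta_i^N=1$ for each $i$ (using $\boldsymbol\zeta^N=\mathbf 1$), so $N\mathbb Z^n\subset\Lambda_{\boldsymbol\zeta}$. This gives two consequences at once: the quotient $\mathbb Z^n/\Lambda_{\boldsymbol\zeta}$ is finite, whence $\Lambda_{\boldsymbol\zeta}$ has rank $n$; and by the first isomorphism theorem $[\mathbb Z^n:\Lambda_{\boldsymbol\zeta}]=\#\,\mathrm{image}(\phi)$. So the whole problem reduces to computing $\#\,\mathrm{image}(\phi)$.

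The image of $\phi$ is exactly the subgroup $H=\langle\zeta_1,\dots,\zeta_n\rangle$ of $K^*$ generated by the coordinates of $\boldsymbol\zeta$, since $\phi(\mathbf e_i)=\zeta_i$. Each $\zeta_i$ is a root of unity (indeed $\zeta_i^N=1$), so $H\subset\mu_N$; as a subgroup of the cyclic group $\mu_N$ it is itself cyclic. The order of a subgroup of a cyclic group generated by elements of orders $d_1,\dots,d_n$ equals $\mathrm{lcm}(d_1,\dots,d_n)$, so $\#H=\mathrm{lcm}(\mathrm{ord}(\zeta_1),\dots,\mathrm{ord}(\zeta_n))$. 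On the other hand $\mathrm{ord}(\boldsymbol\zeta)$ is by definition the least positive integer $k$ with $\zeta_i^k=1$ for every $i$, which is the same lcm. Hence $\#\,\mathrm{image}(\phi)=N$, and we conclude $[\mathbb Z^n:\Lambda_{\boldsymbol\zeta}]=N$.

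The only step that requires a moment's thought is the identification $\#H=\mathrm{lcm}(\mathrm{ord}(\zeta_i))=\mathrm{ord}(\boldsymbol\zeta)$; the rest is formal. Everything else is bookkeeping with the first isomorphism theorem and the elementary fact that a homomorphism out of $\mathbb Z^n$ whose image is finite has kernel of full rank.
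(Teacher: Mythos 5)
Your proposal is correct and follows essentially the same route as the paper: the same homomorphism $\phi(\mathbf a)=\boldsymbol\zeta^{\mathbf a}$, the containment $N\mathbb Z^n\subset\Lambda_{\boldsymbol\zeta}$ for the rank, and the first isomorphism theorem for the index. The only cosmetic difference is that you compute $\#\,\mathrm{Im}(\phi)$ as the order of the cyclic group $\langle\zeta_1,\dots,\zeta_n\rangle$ via an lcm of orders, whereas the paper identifies $\mathrm{Im}(\phi)=\mu_N$ and notes $\#\mu_N=N$ because $N$ is prime to the characteristic; both handle the positive-characteristic subtlety correctly.
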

\begin{proof}Since $\Lambda_{\boldsymbol\zeta}$ contains $N\mathbb Z^n$, it has full rank. Consider the group homomorphism $\phi: \mathbb{Z}^n\mapsto \mathbb{G}_m$, given by $\phi(\mathbf a)=\boldsymbol\zeta^{\mathbf a}$. Then $\ker(\phi)=\Lambda_{\boldsymbol\zeta}$ and $\mathrm{Im}(\phi)=\mu_N$. By the isomorphism theorem we find $[\mathbb Z^n:\Lambda_{\boldsymbol\zeta}]=\#\mu_N=N.$ The last equality holds clearly in characteristic zero. It is also true in positive characteristic $p>0$ since the order of an element in $(\mathbb G_m^n)_{\mathrm{tors}}$ is always coprime to $p$.\end{proof}

\begin{definition}For a set $V\subset \mathbb{G}_m^n$ and $T\ge 1$ we define $$V_T=\{\boldsymbol\zeta\in V\cap(\mathbb G_m^n)_{\mathrm{tors}} : \mathrm{ord}(\boldsymbol\zeta)\le T\}.$$\end{definition}

\begin{definition}A hypersurface $X\subset\mathbb G_m^n$ is the zero set $Z(P)$ of $P\in K[X_1^{\pm 1},\dots, X_n^{\pm 1}]\setminus (K[X_1^{\pm 1},\dots, X_n^{\pm 1}]^*\cup\{0\})$ in $\mathbb G_m^n$.\end{definition}
\begin{remark}Let $P\in K[X_1^{\pm 1},\dots, X_n^{\pm 1}]\setminus (K[X_1^{\pm 1},\dots, X_n^{\pm 1}]^*\cup\{0\}).$ Then the dimension of $Z(P)$ equals $n-1$.\end{remark}

\begin{lemma}\label{lem:D0}Let $X=Z(P)$ be a hypersurface in $\mathbb G_m^n$. Then $$\#X^*_T\ll_n \mathrm{Ldeg}(P)T^{n-1/n}.$$\end{lemma}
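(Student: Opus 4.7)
The plan is to use Minkowski's second theorem to attach to each torsion point a tuple of short vectors in its annihilator lattice, apply Lemma \ref{lem:fixedA} to count how many torsion points share such a tuple, and finally apply Lemma \ref{lem:sum1} to sum the contributions.

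First I would set things up as follows. Fix $\boldsymbol\zeta\in X^*_T$ of order $N\le T$. By Lemma \ref{lem:lambdaZeta} the lattice $\Lambda_{\boldsymbol\zeta}\subset \mathbb Z^n$ has full rank and index $N$, so by Lemma \ref{lem:Mink2} its successive minima satisfy $\lambda_1\lambda_2\cdots \lambda_n\le N\le T$. Choose linearly independent $\mathbf a_1,\ldots,\mathbf a_{n-1}\in\Lambda_{\boldsymbol\zeta}$ with $|\mathbf a_i|=\lambda_i$. Since $\lambda_i\le \lambda_j$ for $j\ge i$, the inequality $\lambda_1\cdots\lambda_n\le T$ gives $\lambda_i^{\,n-i+1}\le T/(\lambda_1\cdots\lambda_{i-1})$, hence
\[
|\mathbf a_i|\le \bigl(T/(|\mathbf a_1|\cdots |\mathbf a_{i-1}|)\bigr)^{1/(n-i+1)} \qquad (i=1,\ldots,n-1).
\]

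Next, I would bound $\#X^*_T$ by summing over tuples. Every $\boldsymbol\zeta\in X^*_T$ lies in $\{\mathbf x\in\mathbb G_m^n:\mathbf x^{\mathbf a_i}=1,\ i=1,\ldots,n-1\}$ for at least one linearly independent tuple $(\mathbf a_1,\ldots,\mathbf a_{n-1})$ as above, so
\[
\#X^*_T\le \sum_{(\mathbf a_1,\ldots,\mathbf a_{n-1})}\#\Bigl(X^*\cap \bigcap_{i=1}^{n-1}\{\mathbf x^{\mathbf a_i}=1\}\Bigr),
\]
where the sum ranges over linearly independent tuples satisfying the Minkowski bounds above. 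By Lemma \ref{lem:fixedA} each inner term is $\le 2(n-1)!\,\mathrm{Ldeg}(P)\,|\mathbf a_1|\cdots|\mathbf a_{n-1}|$, so up to a constant depending only on $n$ it remains to bound
\[
S:=\sum_{(\mathbf a_1,\ldots,\mathbf a_{n-1})}|\mathbf a_1|\cdots |\mathbf a_{n-1}|.
\]

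For the final estimate I would stratify by the values $k_i=|\mathbf a_i|$. The number of $\mathbf a\in\mathbb Z^n$ with $|\mathbf a|=k$ is $(2k+1)^n-(2k-1)^n\ll_n k^{n-1}$, so summing over $\mathbf a_i$ with $|\mathbf a_i|=k_i$ contributes $\ll_n k_i^{\,n-1}$; multiplied by the factor $k_i$ this gives weight $k_i^{\,n}$. Therefore
\[
S\ll_n \sum_{k_1\le T^{1/n}}\sum_{k_2\le (T/k_1)^{1/(n-1)}}\cdots\sum_{k_{n-1}\le (T/(k_1\cdots k_{n-2}))^{1/2}} (k_1\cdots k_{n-1})^n.
\]
This is exactly the shape of Lemma \ref{lem:sum1} with $\beta=n$ and $\alpha_i=1/(n-i+1)$, all lying in $[0,1)$. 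Computing
\[
A=\prod_{i=1}^{n-1}\bigl(1-\tfrac{1}{n-i+1}\bigr)=\prod_{j=2}^{n}\tfrac{j-1}{j}=\tfrac{1}{n},
\]
Lemma \ref{lem:sum1} yields $S\ll_n T^{(n+1)(1-1/n)}=T^{n-1/n}$, giving the desired bound.

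The main obstacle is purely bookkeeping: ensuring that the ``at least one tuple'' selection is legitimate (which it is, since each $\boldsymbol\zeta$ really does admit a successive-minima tuple satisfying the Minkowski inequalities), and that plugging $\alpha_i=1/(n-i+1)$ and $\beta=n$ into Lemma \ref{lem:sum1} produces the exponent $n-1/n$, which the telescoping product for $A$ confirms. No finer arithmetic information about $\boldsymbol\zeta$ is needed beyond its order bound.
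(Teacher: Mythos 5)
Your proposal is correct and follows essentially the same route as the paper: successive minima of $\Lambda_{\boldsymbol\zeta}$ via Lemmas \ref{lem:lambdaZeta} and \ref{lem:Mink2}, the pointwise count from Lemma \ref{lem:fixedA}, and the summation Lemma \ref{lem:sum1} with $\alpha_i=1/(n-i+1)$ and $\beta=n$, yielding the exponent $(n+1)(1-1/n)=n-1/n$. No gaps.
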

\begin{proof}Let $\boldsymbol\zeta\in\mathbb{G}_m^n$ be of order $N\le T$. Since there are only finitely many points in $\mathbb Z^n$ of finite norm, the infimum in the definition of $\lambda_i$ is a minimum. Thus there exists a system of $n$ linear independent vectors $\mathbf a_1,\dots,\mathbf a_n\in \Lambda_{\boldsymbol\zeta}$ such that $|\mathbf a_i|=\lambda_i(\Lambda_{\boldsymbol\zeta})$ for all $i=1,\dots, n$. Then by Lemmas \ref{lem:Mink2} and \ref{lem:lambdaZeta} we have $|\mathbf a_1|\cdots |\mathbf a_n|\le [\mathbb Z^n:\Lambda_{\boldsymbol{\zeta}}]=N\le T$. Since $|\mathbf a_1|\le \dots \le |\mathbf a_n|$ we have $|\mathbf a_i|^{n-(i-1)}\le T/(|\mathbf a_1|\cdots |\mathbf a_{i-1}|)$ for all $i=1,\dots, n$ which yields $|\mathbf a_i|\le \left(T/(|\mathbf a_1|\cdots |\mathbf a_{i-1}|)\right)^{1/(n-(i-1))}$. Therefore \begin{align*}(\mathbb{G}_m^n)_T&\subset \{\boldsymbol\zeta\in\mathbb{G}_m^n: \text{ there exist linearly independent }\mathbf a_1,\dots ,\mathbf a_{n}\text{ such that }\boldsymbol\zeta^{\mathbf a_i}=1, \\
&\quad\quad\text{ and }|\mathbf a_i|\le \left(T/(|\mathbf a_1|\cdots |\mathbf a_{i-1}|)\right)^{1/(n-i+1)} \text{ for all }i=1,\dots, n\}.\end{align*} The number of vectors $\mathbf a\in\mathbb{Z}^n$ satisfying $|\mathbf a|=k$ is bounded by a constant depending on $n$ times $k^{n-1}$ . By Lemma \ref{lem:fixedA} have \begin{align*}\#X^*_T&\le \sum_{\mathbf a_1,\dots ,\mathbf a_{n-1}}\#\left(X^*\cap\bigcap_{i=1}^{n-1}\left\{\mathbf x\in\mathbb{G}_m^n: \mathbf x^{\mathbf a_i}=1\right\}\right)\\
&\ll_{n} \mathrm{Ldeg}(P)\sum_{\mathbf a_1,\dots ,\mathbf a_{n-1}}|\mathbf a_1|\cdots |\mathbf a_{n-1}|\\
&\ll_n \mathrm{Ldeg}(P)\sum_{k_1,\dots ,k_{n-1}}(k_1\cdots k_{n-1})^n;\end{align*}
in the first two sums the $\mathbf a_i$ satisfy $|\mathbf a_i|\le \left(T/(|\mathbf a_1|\cdots |\mathbf a_{i-1}|)\right)^{1/(n-i+1)}$, in the last step we sum over $k_i=|\mathbf a_i|$ such that $k_i\le \left(T/(k_1\cdots k_{i-1})\right)^{1/(n-i+1)}$. The last sum can be bounded by Lemma \ref{lem:sum1} with $\alpha_i = 1/(n-i+1)$ and $\beta=n$. We have $\prod_{i=1}^{n-1}(1-\alpha_i)=\prod_{i=2}^n\frac{i-1}{i}=1/n$ and thus the exponent equals $(n+1)(1-1/n)=n-1/n$.\end{proof}

\section{Common roots and admissibility}\label{sec:admiss}
This section contains technical statements on algebraic subgroups of $\mathbb G_m^n$. As we work in arbitrary characteristic and as the literature (cf. \cite{BG}) concerns mainly characteristic zero we give full proofs.

\begin{lemma}\label{lem:xU}Let $m,n\in\mathbb{N}$, $U\in\mathrm{Mat}_{n\times m}(\mathbb Z)$ of rank $r$ and $\boldsymbol\zeta\in(\mathbb{G}_m^m)_{\mathrm{tors}}$ be a torsion point. Then the set of $\mathbf x\in\mathbb{G}_m^n$ such that $\mathbf x^U=\boldsymbol\zeta$ is either empty or a finite union of torsion cosets of dimension $n-r$. Moreover the union is not empty if $r=m$.\end{lemma}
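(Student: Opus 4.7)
The plan is to linearise the equation $\mathbf x^U=\boldsymbol\zeta$ using a Smith normal form of $U$. Write $U=SDT$ with $S\in\mathrm{GL}_n(\mathbb Z)$, $T\in\mathrm{GL}_m(\mathbb Z)$ and $D=\mathrm{diag}_{n\times m}(\alpha_1,\dots,\alpha_{\min(n,m)})$. Since $\mathrm{rank}(U)=r$, exactly the first $r$ diagonal entries $\alpha_1,\dots,\alpha_r$ are positive and the remainder vanish. Substituting $\mathbf y=\mathbf x^S$ and $\boldsymbol\eta=\boldsymbol\zeta^{T^{-1}}$ (both are bijections of $\mathbb G_m^n$ respectively $\mathbb G_m^m$, and $\boldsymbol\eta$ is still a torsion point since integer matrix operations preserve torsion) reduces the equation $\mathbf x^U=\boldsymbol\zeta$ to the equivalent equation $\mathbf y^D=\boldsymbol\eta$.

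Next I would read off $\mathbf y^D=\boldsymbol\eta$ coordinate by coordinate. For $1\le j\le r$ the $j$-th equation is $y_j^{\alpha_j}=\eta_j$, which has exactly $\alpha_j$ solutions in $\mathbb G_m(K)$ because $K$ is algebraically closed, and every such $y_j$ is a torsion point because $\eta_j$ is. For $r<j\le m$ the $j$-th coordinate of $\mathbf y^D$ equals $1$, so we must have $\eta_j=1$; if this fails for some $j$ the solution set is empty. Finally the coordinates $y_{r+1},\dots,y_n$ do not appear in any equation and are therefore free parameters in $\mathbb G_m$. Hence, when the equation is solvable, the set of solutions in $\mathbf y$ is the disjoint union of $\alpha_1\cdots\alpha_r$ torsion cosets, each a translate of the subgroup $\{1\}^r\times \mathbb G_m^{n-r}$, of dimension $n-r$.

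Pulling back via the bijective group homomorphism $\mathbf y\mapsto \mathbf y^{S^{-1}}$ of $\mathbb G_m^n$ sends each such torsion coset $\boldsymbol\omega(\{1\}^r\times \mathbb G_m^{n-r})$ to $\boldsymbol\omega^{S^{-1}}\cdot(\{1\}^r\times\mathbb G_m^{n-r})^{S^{-1}}$; the prefactor is again torsion, and the image of the subgroup is an irreducible algebraic subgroup of $\mathbb G_m^n$ of the same dimension $n-r$ since the map is an algebraic automorphism. This gives exactly the desired structure for the solution set in $\mathbf x$. For the final clause, when $r=m$ there are no constraints of the form $\eta_j=1$ and each equation $y_j^{\alpha_j}=\eta_j$ is solvable in $K$, so the union is nonempty.

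The only non-routine aspect will be the careful bookkeeping with the rectangular diagonal matrix $D$ (distinguishing the three index ranges $j\le r$, $r<j\le m$, $r<j\le n$ when $n$ and $m$ differ) and checking that the group automorphism $\mathbf y\mapsto\mathbf y^{S^{-1}}$ genuinely sends torsion cosets to torsion cosets of the same dimension; both are straightforward once written out.
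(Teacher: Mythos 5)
Your proof is correct and follows essentially the same route as the paper: both rest on the Smith normal form of $U$, the only difference being that the paper packages the diagonalization into Lemma \ref{lem:HLambda} (writing the solution set as a translate of $H_{U\mathbb Z^m}$) and reserves the explicit computation for the nonemptiness claim when $r=m$, whereas you inline it. One small inaccuracy: over a field of characteristic $p>0$ the equation $y_j^{\alpha_j}=\eta_j$ has fewer than $\alpha_j$ distinct solutions when $p\mid\alpha_j$ (namely $\alpha_j$ divided by the largest power of $p$ dividing $\alpha_j$), so your count of $\alpha_1\cdots\alpha_r$ cosets is not always right; since every root is still a nonzero torsion point and the lemma only asserts a finite, nonempty union, this does not affect the argument.
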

\begin{proof}Suppose that there exists $\mathbf y\in\mathbb G_m^n$ satisfies $\mathbf y^U=\boldsymbol{\zeta}$. Let $\Lambda=U\mathbb Z^m$. Then $\mathbf x^U=\boldsymbol\zeta$ if and only if $\mathbf x\in\mathbf y H_{\Lambda}$ and we can conclude with Lemma \ref{lem:HLambda}. For the second statement let $U=SDT$ be a Smith normal form of $U$, where $D=\mathrm{diag}_{n\times m}(\alpha_1,\dots,\alpha_m)$. Let $a=\alpha_1\cdots \alpha_m$ and $\beta_i=a/\alpha_i, i=1,\dots, m$. Let $V=T^{-1}\mathrm{diag}_{m\times n}(\beta_1,\dots, \beta_m) S^{-1}\in\mathrm{Mat}_{m\times n}(\mathbb Z)$. Then $VU=aE_m$. Since $\mathbb G_m$ is algebraically closed we can find $\boldsymbol\eta\in\mathbb G_m^n$ such that $\boldsymbol \eta^a=\boldsymbol\zeta$. Then $\mathbf y=\boldsymbol\eta^V$ satisfies $\mathbf y^U = \boldsymbol\eta^a =\boldsymbol\zeta.$\end{proof}

\begin{definition}We call $\mathbf u\in \mathbb Z^n\setminus \{0\}$ primitive if its entries are coprime.\end{definition}

\begin{lemma}\label{lem:irred}Let $\mathbf u\in\mathbb Z^n$ and $z\in\mathbb G_m$. Then $(X_1,\dots, X_n)^{\mathbf u}-z\in K[X_1^{\pm 1},\dots, X_n^{\pm 1}]$ is irreducible if and only if $\mathbf u$ is primitive.\end{lemma}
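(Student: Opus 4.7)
The plan is to prove the two implications separately, with the easy direction handling non-primitive $\mathbf u$ by explicit factorisation, and the converse by reducing to the case $\mathbf u = \mathbf e_1$ via a monoidal substitution.

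For the ``only if'' direction, suppose $\mathbf u$ is not primitive. Writing $d=\gcd(u_1,\dots,u_n)\ge 2$ and $\mathbf u=d\mathbf v$ with $\mathbf v\in\mathbb Z^n$, I use that $K$ is algebraically closed to pick $w\in K$ with $w^d=z$, so that $Y^d-z=\prod_{i=0}^{d-1}(Y-\zeta^i w)$ for any primitive $d$-th root of unity $\zeta\in K$ (in characteristic $p\mid d$ one still gets $Y^d - z = (Y-w)^d$ or a similar non-trivial factorisation via the separable/inseparable decomposition; in any case the factor $Y - w$ appears with multiplicity $\ge 1$ and $Y^d - z$ is reducible in $K[Y]$). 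Substituting $Y=\mathbf X^{\mathbf v}$ gives $\mathbf X^{\mathbf u}-z=(\mathbf X^{\mathbf v}-w)\cdot\bigl((\mathbf X^{\mathbf v})^{d-1}+w(\mathbf X^{\mathbf v})^{d-2}+\dots+w^{d-1}\bigr)$. Since $\mathbf v\ne\mathbf 0$, both factors have Laurent support of size $\ge 2$, so neither is a monomial; as the units of $K[X_1^{\pm1},\dots,X_n^{\pm1}]$ are exactly the monomials $c\mathbf X^{\mathbf w}$ with $c\in K^*$, this is a genuine factorisation into non-units.

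For the converse, suppose $\mathbf u$ is primitive. Because the $\gcd$ of its entries is $1$, $\mathbf u$ can be completed to a $\mathbb Z$-basis of $\mathbb Z^n$: equivalently there exists $A\in\mathrm{GL}_n(\mathbb Z)$ with $A\mathbf u=\mathbf e_1$. Using the compatibility $\mathbf x^{AB}=(\mathbf x^A)^B$ and that $A$ has an inverse in $\mathrm{GL}_n(\mathbb Z)$, the assignment $\mathbf X\mapsto \mathbf X^{A^{-1}}$ extends to a $K$-algebra automorphism $\phi_A$ of the Laurent polynomial ring $R=K[X_1^{\pm1},\dots,X_n^{\pm1}]$, with inverse induced by $A$. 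Under $\phi_A$ the monomial $\mathbf X^{\mathbf u}$ is sent to $\mathbf X^{A\mathbf u}=\mathbf X^{\mathbf e_1}=X_1$, so $\phi_A(\mathbf X^{\mathbf u}-z)=X_1-z$. Because $\phi_A$ is an automorphism, irreducibility of $\mathbf X^{\mathbf u}-z$ in $R$ is equivalent to irreducibility of $X_1-z$ in $R$.

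Finally, I verify that $X_1-z$ is irreducible in $R$. The ring $R$ is a localisation of the UFD $K[X_1,\dots,X_n]$ at $X_1\cdots X_n$, hence itself a UFD, and its units are $c\mathbf X^{\mathbf w}$ with $c\in K^*$, $\mathbf w\in\mathbb Z^n$. Viewing $X_1-z$ as a polynomial in $X_1$ over $K[X_2^{\pm1},\dots,X_n^{\pm1}]$, it has $X_1$-degree $1$; any factorisation $X_1-z=gh$ into non-units must split the degrees as $1+0$, forcing $h\in K[X_2^{\pm1},\dots,X_n^{\pm1}]$ to divide both the leading coefficient $1$ and the constant $-z\in K^*$, so $h$ is a unit, contradiction.

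The only subtle point in the argument is ensuring that the $\mathrm{GL}_n(\mathbb Z)$-action really is by $K$-algebra automorphisms of $R$ (which is immediate from $\mathbf x^{AB}=(\mathbf x^A)^B$ and the existence of $A^{-1}\in\mathrm{GL}_n(\mathbb Z)$), and that completing a primitive vector to a $\mathbb Z$-basis gives such an $A$; both are standard and require no new ideas beyond the notational framework already set up in Section \ref{sec:not}.
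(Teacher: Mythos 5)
Your proof is correct and follows essentially the same route as the paper: a monoidal automorphism of $K[X_1^{\pm1},\dots,X_n^{\pm1}]$ induced by a $\mathrm{GL}_n(\mathbb Z)$ change of basis (the paper uses the elementary divisor theorem to write $\mathbf u=\lambda\mathbf b_1$ and reduces both directions at once to $X_1^{\lambda}-z$, whereas you split the two implications and supply explicitly the factorisation of $Y^d-z$ and the irreducibility of $X_1-z$ that the paper leaves as ``not hard to show''). One bookkeeping nit: with the paper's column convention $\mathbf X^{\mathbf u}\mapsto(\mathbf X^{C})^{\mathbf u}=\mathbf X^{C\mathbf u}$, the substitution sending $\mathbf X^{\mathbf u}$ to $\mathbf X^{A\mathbf u}=X_1$ is $\mathbf X\mapsto\mathbf X^{A}$ rather than $\mathbf X\mapsto\mathbf X^{A^{-1}}$ — harmless, since either map is an automorphism and the conclusion is unaffected.
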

\begin{proof}Note that for all $A\in\mathrm{GL}_n(\mathbb Z)$ the map $\phi_A: \sum_{\mathbf i\in\mathbb Z^n}p_{\mathbf i}(X_1,\dots,X_n)^{\mathbf i}\mapsto \sum_{\mathbf i\in\mathbb Z^n}p_{\mathbf i}(X_1,\dots,X_n)^{A\mathbf i}$ is an automorphism of $K[X_1^{\pm 1},\dots X_n^{\pm 1}]$. By the theorem of elementary divisors (Theorem III.7.8 in \cite{Lang}) there exists $\lambda\in\mathbb Z\setminus\{0\}$ and a basis $\mathbf b_1,\dots, \mathbf b_n$ of $\mathbb Z^n$ such that $\mathbf u=\lambda \mathbf b_1$. Let $B=(\mathbf b_1,\dots, \mathbf b_n)$. Then $\mathbf {X^u}-z$ is irreducible if and only if $\phi_{B^{-1}}(\mathbf {X^u}-z)=\mathbf X^{\lambda B^{-1}\mathbf b_1}-z=X_1^\lambda-z$ is irreducible too. It is not hard to show that this is the case if and only if $1=|\lambda|=\gcd(\mathbf u)$.\end{proof}

\begin{lemma}\label{lem:R}Let $R_1,\dots, R_m\in K[X_1^{\pm 1},\dots, X_n^{\pm 1}]\setminus\{0\}$ each be a product of factors of the form $\mathbf{X^u}-\zeta$, where $\zeta$ is a root of unity and $\mathbf u\in\mathbb Z^n$ is primitive. Assume that $m\ge 2$ and $R_1,\dots, R_m$ are coprime. Then the set of their common roots is a finite union of torsion cosets of dimension at most $n-2$.\end{lemma}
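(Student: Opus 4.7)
The plan is to reduce to the case $m=2$ by decomposing each $R_i$ into its irreducible factors and then distributing the intersection over the union, and then to invoke Lemma \ref{lem:xU} on a carefully chosen pair of factors.

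Concretely, write each $R_i=\prod_{j} F_{ij}$ with $F_{ij}=\mathbf{X}^{\mathbf u_{ij}}-\zeta_{ij}$ and $\mathbf u_{ij}$ primitive. Then
\[
\bigcap_{i=1}^m Z(R_i)=\bigcap_{i=1}^m\bigcup_{j}Z(F_{ij})=\bigcup_{(j_1,\dots,j_m)}\bigcap_{i=1}^m Z(F_{ij_i}),
\]
a \emph{finite} union. Since $\bigcap_{i}Z(F_{ij_i})$ is contained in the intersection of any two of the $Z(F_{ij_i})$, it suffices to show that for each tuple $(j_1,\dots,j_m)$ one can find indices $k\neq \ell$ such that $F_{kj_k}$ and $F_{\ell j_\ell}$ are not associates in $K[X_1^{\pm 1},\dots,X_n^{\pm 1}]$, and then that $Z(F_{kj_k})\cap Z(F_{\ell j_\ell})$ is either empty or a finite union of torsion cosets of dimension $n-2$. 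Existence of such $k,\ell$ is where coprimality of $R_1,\dots,R_m$ enters: if $F_{1j_1},\dots,F_{mj_m}$ were pairwise associates, their common associate would be an irreducible factor of every $R_i$, contradicting the hypothesis.

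So I reduce to the following: given two non-associate primitive factors $\mathbf{X}^{\mathbf u}-\zeta$ and $\mathbf{X}^{\mathbf v}-\xi$, describe their common zero set. I split on whether $\mathbf u,\mathbf v$ are linearly independent over $\mathbb Z$. In the independent case, the matrix $U=(\mathbf u\;\mathbf v)\in\mathrm{Mat}_{n\times 2}(\mathbb Z)$ has rank $2$, and the common zero locus is exactly $\{\mathbf x\in\mathbb G_m^n:\mathbf x^U=(\zeta,\xi)\}$. Since $(\zeta,\xi)$ is a torsion point of $\mathbb G_m^2$, Lemma \ref{lem:xU} says this set is either empty or a finite union of torsion cosets of dimension $n-2$, exactly as required. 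In the dependent case, primitivity of $\mathbf u$ and $\mathbf v$ forces $\mathbf v=\pm\mathbf u$. The identity $\mathbf X^{-\mathbf u}-\zeta^{-1}=-\zeta^{-1}\mathbf X^{-\mathbf u}(\mathbf X^{\mathbf u}-\zeta)$ shows that $\mathbf X^{\mathbf u}-\zeta$ and $\mathbf X^{\pm\mathbf u}-\xi$ are associates precisely when $(\mathbf v,\xi)=(\mathbf u,\zeta)$ or $(\mathbf v,\xi)=(-\mathbf u,\zeta^{-1})$. In the remaining (non-associate) subcases the two equations $\mathbf x^{\mathbf u}=\zeta$ and $\mathbf x^{\mathbf v}=\xi$ are incompatible, so the intersection is empty.

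Putting the pieces together yields the claimed description: the full common zero locus is a finite union of sets each of which is either empty or a finite union of torsion cosets of dimension $n-2$, hence itself a finite union of torsion cosets of dimension at most $n-2$. The only real content is Lemma \ref{lem:xU}; the main nuisance is the bookkeeping around units in the Laurent ring and the associate check for $\mathbf v=\pm\mathbf u$, which is where one must be careful not to lose a case.
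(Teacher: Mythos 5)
Your overall strategy is the same as the paper's: for each common root, select one vanishing binomial factor from each $R_i$, feed the resulting system $\mathbf x^U=\boldsymbol\zeta$ into Lemma \ref{lem:xU}, and use coprimality to rule out the degenerate case where all chosen exponent vectors are collinear. Your case analysis for two non-associate primitive binomials (linearly independent exponents versus $\mathbf v=\pm\mathbf u$, where non-associateness forces the two equations to be inconsistent) is correct and plays the same role as the paper's rank-one argument.

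There is, however, one genuine gap. You reduce each piece $\bigcap_{i=1}^m Z(F_{ij_i})$ to the intersection of just two of its factors and conclude that the piece \emph{is} a finite union of torsion cosets because it is \emph{contained in} $Z(F_{kj_k})\cap Z(F_{\ell j_\ell})$. Containment in a finite union of torsion cosets does not make a set a finite union of torsion cosets (a single non-torsion point inside a coset is a counterexample to that inference), and the lemma asserts that the common zero locus is such a union, not merely that it sits inside one. Relatedly, the piece can have dimension strictly less than $n-2$ (e.g. $Z(X_1-1)\cap Z(X_2-1)\cap Z(X_3-1)$ in $\mathbb G_m^3$), so the intermediate claim that each nonempty piece is a union of cosets of dimension exactly $n-2$ is also off. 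The repair is immediate and is what the paper does: the piece $\bigcap_{i=1}^m Z(F_{ij_i})$ is itself the solution set of $\mathbf x^U=\boldsymbol\zeta$ for the full matrix $U=(\mathbf u_{1j_1},\dots,\mathbf u_{mj_m})\in\mathrm{Mat}_{n\times m}(\mathbb Z)$; your pair analysis certifies that either this system is inconsistent or $U$ has rank at least $2$, and in the latter case Lemma \ref{lem:xU} applied to $U$ itself shows the piece is a finite union of torsion cosets of dimension $n-\mathrm{rank}(U)\le n-2$. (For the downstream use in Lemma \ref{lem:torsCosets} the containment you prove would actually suffice, but it does not establish the lemma as stated.)
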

\begin{proof}If there are empty products there are no common roots, therefore we can assume that there are no $R_i$ which equal $1$. Suppose that $\mathbf x\in\mathbb{G}_m^n$ is a common root. Then for each $i\in \{1,\dots m\}$ there exist a primitive $\mathbf u_i\in\mathbb{Z}^n$ and $\zeta_i$ a root of unity such that $\mathbf X^{\mathbf u_i}-\zeta_i | R_i$ and $\mathbf x^{\mathbf u_i} =\zeta_i$. Let $U=(\mathbf u_1,\dots,\mathbf u_m)\in\mathrm{Mat}_{n\times m}(\mathbb{Z})$ and $\boldsymbol\zeta=(\zeta_1,\dots,\zeta_m)$. Then $\mathbf x^U=\boldsymbol\zeta$. Let $U=SDT$ be a smith normal form of $U$. The rank $j$ of $U$ is the largest index such that $\alpha_j\neq 0$. Since the $\mathbf u_i$ are primitive we have $U\ne\mathbf 0$ and therefore there exists such a $j$. Assume that $j=1$. Then we find $$\mathbf u_i= U\mathbf e_i = SDT\mathbf e_i=SD\mathbf t_i=\alpha_1t_{1,i}S\mathbf e_1=\alpha_1 t_{1,i}\mathbf{s}_1.$$ Since the $\mathbf u_i$ are primitive, we have $\alpha_1t_{1,i}=\pm1$. Thus $\mathbf u_i=\pm \mathbf u_1$. So for all $i$ there is some root of unity $\xi_i$ such that $\mathbf X^{\mathbf u_1}-\xi_i |R_i$ and $\mathbf x^{\mathbf u_1}=\xi_i $, since $\mathbf {X^u}-\zeta = -\zeta\mathbf{X^u}(\mathbf{X^{-u}}-\zeta^{-1})$. Since the $R_i$ are coprime, not all $\xi_i$ are the same and hence there is no common root. So we can assume that $j\ge 2$. Then the set of $\mathbf x\in\mathbb{G}_m^n$ such that $\mathbf x^U=\boldsymbol\zeta$ is a finite union of torsion cosets of dimension $n-j\le n-2$ by Lemma \ref{lem:xU}. Since $U$ is from a finite set, this shows the claim.\end{proof}

\begin{lemma}\label{lem:divide}Let $\mathbf{u}\in\mathbb{Z}^n\setminus\{0\}, z\in \mathbb{G}_m$ and $P\in K[X_1^{\pm 1},\dots, X_n^{\pm 1}]\setminus\{0\}$. Assume that $\mathbf {X^u}-z |P$. Then there exist $\mathbf {i,j}\in \mathrm{Supp}(P)$ and $k\in\mathbb{N}$ such that $k\mathbf{ u=i-j}.$\end{lemma}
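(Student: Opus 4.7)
The plan is to reduce the general $\mathbf{u}$ to the case $\mathbf{u} = d\mathbf{e}_1$ by a unimodular change of variables, and then read off the conclusion from the normal form of $P$ modulo $X_1^d - z$.

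First I would set $d = \gcd(\mathbf{u})$ and write $\mathbf{u} = d\mathbf{u}'$ with $\mathbf{u}'\in\mathbb Z^n$ primitive. By the elementary divisor theorem (as used in Lemma \ref{lem:irred}), there is $B\in\mathrm{GL}_n(\mathbb Z)$ with $B\mathbf{e}_1 = \mathbf{u}'$. Apply the ring automorphism $\phi$ of $K[X_1^{\pm 1},\dots,X_n^{\pm 1}]$ given by $\phi(\mathbf X^{\mathbf v}) = \mathbf X^{B^{-1}\mathbf v}$. Then $\tilde P := \phi(P)$ satisfies $\mathrm{Supp}(\tilde P) = B^{-1}\mathrm{Supp}(P)$ and is divisible by $\phi(\mathbf X^{\mathbf u}-z) = \mathbf X^{B^{-1}\mathbf u}-z = X_1^d - z$. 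Hence it suffices to produce $\tilde{\mathbf i},\tilde{\mathbf j}\in\mathrm{Supp}(\tilde P)$ and $k\in\mathbb N$ with $\tilde{\mathbf i}-\tilde{\mathbf j} = k d\mathbf e_1$; applying $B$ then gives $\mathbf i-\mathbf j = k d\mathbf u' = k\mathbf u$, and $k$ can be made positive by swapping the roles of $\tilde{\mathbf i}$ and $\tilde{\mathbf j}$.

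Next I would expand $\tilde P$ by separating the $X_1$-variable: writing $\mathbf X' = (X_2,\dots,X_n)$,
\[
\tilde P = \sum_{m\in\mathbb Z} A_m(\mathbf X')\, X_1^m, \qquad A_m\in K[X_2^{\pm 1},\dots,X_n^{\pm 1}].
\]
Since $z\in\mathbb G_m$, the element $X_1$ is a unit in the quotient $R := K[X_1^{\pm 1},\dots,X_n^{\pm 1}]/(X_1^d-z)$, with inverse $z^{-1}X_1^{d-1}$, and $R$ is free of rank $d$ over $K[X_2^{\pm 1},\dots,X_n^{\pm 1}]$ with basis $1, X_1,\dots,X_1^{d-1}$, since for any $m\in\mathbb Z$ the relation $X_1^d\equiv z$ reduces $X_1^m$ uniquely to $z^{(m-r)/d}X_1^r$ with $r\in\{0,\dots,d-1\}$ and $m\equiv r\pmod d$. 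The hypothesis $(X_1^d-z)\mid \tilde P$ then forces, for each residue class $r\in\{0,\dots,d-1\}$,
\[
\sum_{\substack{m\in\mathbb Z\\ m\equiv r\,(d)}} z^{(m-r)/d}\, A_m(\mathbf X') = 0 \quad\text{in }K[X_2^{\pm 1},\dots,X_n^{\pm 1}].
\]

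Finally I would pick any $\tilde{\mathbf i} = (m_0, i_2,\dots, i_n)\in\mathrm{Supp}(\tilde P)$. Then the monomial $X_2^{i_2}\cdots X_n^{i_n}$ occurs in $A_{m_0}$ with a nonzero coefficient, so the vanishing identity above with $r \equiv m_0\pmod d$ forces some other index $m_1\equiv m_0\pmod d$, $m_1\ne m_0$, for which this same monomial has a nonzero coefficient in $A_{m_1}$. This yields $\tilde{\mathbf j} := (m_1,i_2,\dots,i_n)\in\mathrm{Supp}(\tilde P)$, and $\tilde{\mathbf i}-\tilde{\mathbf j} = (m_0-m_1)\mathbf e_1$ is a nonzero integer multiple of $d\mathbf e_1$, completing the argument after transporting back by $B$.

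I do not expect any serious obstacle; the main point requiring care is justifying the reduction modulo $X_1^d-z$ inside the Laurent ring, which works precisely because $z\ne 0$ makes $X_1$ a unit modulo $X_1^d-z$ and thus $R$ a free module of rank $d$ over $K[X_2^{\pm 1},\dots,X_n^{\pm 1}]$.
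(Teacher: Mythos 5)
Your argument is correct, but it takes a genuinely different route from the paper's. The paper's proof is more elementary and works directly with the cofactor: writing $P=(\mathbf{X^u}-z)Q$ and comparing coefficients gives the recursion $p_{\mathbf i}=q_{\mathbf{i-u}}-zq_{\mathbf i}$; one then fixes any $\mathbf i_0\in\mathrm{Supp}(Q)$, considers the line $\mathbf i_0+\mathbb Z\mathbf u$, and evaluates the recursion just above the maximal and at the minimal point of $\mathrm{Supp}(Q)$ on that line, where exactly one of the two terms survives. This produces two elements of $\mathrm{Supp}(P)$ whose difference is a positive multiple of $\mathbf u$, with no normalisation of $\mathbf u$ needed. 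Your proof instead reduces to $\mathbf u=d\mathbf e_1$ via a monoidal automorphism (as in Lemma \ref{lem:irred}) and exploits that $K[X_1^{\pm1},\dots,X_n^{\pm1}]/(X_1^d-z)$ is free of rank $d$ over $K[X_2^{\pm1},\dots,X_n^{\pm1}]$ because $z\ne 0$ makes $X_1$ a unit in the quotient; the conclusion is then read off from the vanishing of each basis coefficient. This is heavier in prerequisites (elementary divisors, the quotient-ring structure) but yields a slightly stronger conclusion: for \emph{every} $\mathbf i\in\mathrm{Supp}(P)$ there exists $\mathbf j\in\mathrm{Supp}(P)$ with $\mathbf i-\mathbf j$ a nonzero integer multiple of $\mathbf u$, whereas the paper's argument exhibits only one such pair. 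All the steps you flag as needing care (the well-definedness of the reduction $X_1^m\equiv z^{(m-r)/d}X_1^r$, the freeness of the quotient, and the transport back by $B$, which converts $kd\mathbf e_1$ into $kd\mathbf u'=k\mathbf u$) do go through as you describe.
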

\begin{proof}Let $Q\in K[X_1^{\pm 1},\dots, X_n^{\pm 1}]\setminus\{0\}$ be the Laurent polynomial such that $P=(\mathbf {X^u}-z)Q$. Let $p_{\mathbf i}$ and $q_{\mathbf i}$ be the coefficients of $P$ and $Q$. Then we have $p_{\mathbf i}=q_{\mathbf {i-u}}-zq_{\mathbf i}$ for all $\mathbf i\in\mathbb{Z}^n$. Fix any $\mathbf i_0\in \mathrm{Supp}(Q)$ and for all $k\in\mathbb{Z}$ let $\mathbf i_k=\mathbf i_0+k\mathbf u$. Let $l=\max\{k\in\mathbb{Z}: \mathbf{i}_k\in\mathrm{Supp}(Q)\}$ and $\mathbf{ i=i}_{l+1}$. Then $p_{\mathbf i}=q_{\mathbf{i-u}}-zq_{\mathbf i}=q_{\mathbf{i-u}}\neq 0$ and therefore $\mathbf i\in \mathrm{Supp}(P)$. Similarly let $l'=\min\{k\in\mathbb{Z}: \mathbf{i}_k\in\mathrm{Supp}(Q)\}$ and $\mathbf{ j=i}_{l'}$. Then $p_{\mathbf j}=q_{\mathbf j-\mathbf u}-zq_{\mathbf j}=-zq_{\mathbf j}\neq 0$ and therefore $\mathbf j\in \mathrm{Supp}(P)$. Then $\mathbf{i-j}=\mathbf{i}_{l+1}-\mathbf i_{l'}= (l-l'+1)\mathbf u$. Since $l\ge l'$ the scalar $k=l-l'+1$ is a natural number.\end{proof}

\begin{lemma}\label{lem:artin}Let $G<\mathbb G_m^n$ be a subgroup of codimension $r$, $L\in\mathrm{Mat}_{n\times r}(\mathbb Z)$ a basis of $\Lambda_G$ and $P=\sum_{\mathbf i\in\mathbb Z^n}p_{\mathbf i}(X_1,\dots, X_n)^{\mathbf i}\in K[X_1^{\pm 1},\dots, X_n^{\pm 1}]$. For $\mathbf a\in\mathbb Z^n$ the map $\chi_{\mathbf a}: G\to \mathbb G_m^n, \mathbf x\mapsto \mathbf {x^a}$ is a character on $G$. For a character $\chi: G\to \mathbb G_m$ we put $L_\chi=\{\mathbf i\in \mathrm{Supp}(P): \chi_{\mathbf i}=\chi\}$. For those $\chi$ such that $L_{\chi}\ne \emptyset $ we fix $\mathbf i_\chi\in L_\chi$. For all $\mathbf i\in\mathrm{Supp}(P)$ we have $\mathbf {i-i}_{\chi_{\mathbf i}}\in \Lambda_G$. Thus there is a unique $\mathbf{ u_i}\in \mathbb G_m^r$ such that $\mathbf {i=i}_{\chi_{\mathbf i}}+L\mathbf {u_i}$. Finally we define $P_\chi= \sum_{\mathbf i\in L_\chi}p_{\mathbf i}(Y_1,\dots, Y_r)^{\mathbf{ u_i}}\in K[Y_1^{\pm 1}, \dots, Y_r^{\pm 1}]$.\\
Then we have $P=\sum_{\chi:L_\chi\ne \emptyset}(X_1,\dots, X_n)^{\mathbf i_\chi}P_\chi((X_1,\dots,X_n)^L)$ and if $\mathbf x\in\mathbb G_m^n$ is such that $P(\mathbf x G)=0$, then $\sum_{\mathbf i\in L_\chi}p_{\mathbf i}\mathbf x^{\mathbf i}=0=P_\chi(\mathbf x^L)$ for all characters $\chi$ such that $L_\chi\ne \emptyset$.\end{lemma}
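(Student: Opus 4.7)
The plan is to prove the two assertions in turn, with the key insight being that the decomposition $P = \sum_\chi \mathbf{X}^{\mathbf{i}_\chi} P_\chi(\mathbf{X}^L)$ is just a bookkeeping step, while the vanishing statement will follow from Artin's theorem on the linear independence of characters (which is the reason for the lemma's name).

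First I would establish the decomposition. By the remark preceding the displayed formula, every $\mathbf{i} \in \mathrm{Supp}(P)$ satisfies $\chi_{\mathbf{i}} = \chi_{\mathbf{i}_{\chi_{\mathbf{i}}}}$, so $\chi_{\mathbf{i} - \mathbf{i}_{\chi_{\mathbf{i}}}}$ is trivial on $G$, which means $\mathbf{i} - \mathbf{i}_{\chi_{\mathbf{i}}} \in \Lambda_G$. Since $L$ is a $\mathbb{Z}$-basis of $\Lambda_G$, this yields the claimed unique $\mathbf{u}_{\mathbf{i}} \in \mathbb{Z}^r$ with $\mathbf{i} = \mathbf{i}_{\chi_{\mathbf{i}}} + L\mathbf{u}_{\mathbf{i}}$. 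Partitioning $\mathrm{Supp}(P)$ according to $\chi_{\mathbf{i}}$, I rewrite
\[
P \;=\; \sum_{\chi : L_\chi \ne \emptyset} \sum_{\mathbf{i}\in L_\chi} p_{\mathbf{i}} \, \mathbf{X}^{\mathbf{i}_\chi + L \mathbf{u}_{\mathbf{i}}} \;=\; \sum_{\chi : L_\chi \ne \emptyset} \mathbf{X}^{\mathbf{i}_\chi} \sum_{\mathbf{i} \in L_\chi} p_{\mathbf{i}}\, (\mathbf{X}^L)^{\mathbf{u}_{\mathbf{i}}},
\]
where the inner sum is exactly $P_\chi(\mathbf{X}^L)$.

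For the second assertion, suppose $P(\mathbf{x}\mathbf{g}) = 0$ for every $\mathbf{g} \in G$. Using $(\mathbf{x}\mathbf{g})^{\mathbf{i}} = \mathbf{x}^{\mathbf{i}}\mathbf{g}^{\mathbf{i}} = \mathbf{x}^{\mathbf{i}}\chi_{\mathbf{i}}(\mathbf{g})$ and grouping by $\chi$, I can write
\[
0 \;=\; P(\mathbf{x}\mathbf{g}) \;=\; \sum_{\chi : L_\chi \ne \emptyset} \Bigl(\sum_{\mathbf{i}\in L_\chi} p_{\mathbf{i}}\, \mathbf{x}^{\mathbf{i}}\Bigr)\chi(\mathbf{g}) \qquad \text{for all } \mathbf{g}\in G.
\]
By construction the characters $\chi$ appearing in the outer sum are pairwise distinct homomorphisms $G \to K^*$, so Artin's linear independence of characters forces each coefficient $\sum_{\mathbf{i}\in L_\chi} p_{\mathbf{i}}\, \mathbf{x}^{\mathbf{i}}$ to vanish. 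The identity $P_\chi(\mathbf{x}^L) = \mathbf{x}^{-\mathbf{i}_\chi}\sum_{\mathbf{i}\in L_\chi} p_{\mathbf{i}}\, \mathbf{x}^{\mathbf{i}}$, obtained from $\mathbf{x}^{L\mathbf{u}_{\mathbf{i}}} = \mathbf{x}^{\mathbf{i} - \mathbf{i}_\chi}$, then gives $P_\chi(\mathbf{x}^L) = 0$ as well.

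The only potential obstacle is ensuring that Artin's theorem applies, which requires the characters to be genuinely distinct as homomorphisms $G \to K^*$; but this is immediate because two indices $\mathbf{i}, \mathbf{j}$ that induce the same character on $G$ live in the same class $L_\chi$ by definition, so distinct $\chi$'s correspond to distinct characters. Everything else is a direct unravelling of definitions.
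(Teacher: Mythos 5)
Your proof is correct and follows essentially the same route as the paper: partition $\mathrm{Supp}(P)$ by the induced characters to get the decomposition, apply Artin's linear independence of characters to the identity $0=\sum_\chi\bigl(\sum_{\mathbf i\in L_\chi}p_{\mathbf i}\mathbf x^{\mathbf i}\bigr)\chi(\mathbf g)$, and recover $P_\chi(\mathbf x^L)=0$ via $\mathbf x^{L\mathbf u_{\mathbf i}}=\mathbf x^{\mathbf i-\mathbf i_\chi}$. Your explicit check that the characters indexing the outer sum are pairwise distinct (so Artin's theorem applies) is a point the paper leaves implicit, and you correctly place $\mathbf u_{\mathbf i}$ in $\mathbb Z^r$ rather than the statement's typo $\mathbb G_m^r$.
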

\begin{proof}The argument has similarities with the proof of Proposition 3.2.14 in \cite{BG}. First we show that the $\mathbf{u_i}$ are well defined. Let $\mathbf i\in\mathrm{Supp}(P)$. Since $\mathbf i_{\chi_{\mathbf i}}\in L_{\chi_{\mathbf i}}$ we have $\mathbf x^{\mathbf i_{\chi_\mathbf i}}= \mathbf{ x^i}$ for all $\mathbf x\in G$ and hence $\mathbf i-\mathbf i_{\chi_\mathbf i}\in \Lambda_G$.

Let us abbreviate $\mathbf X=(X_1,\dots, X_n)$. Then we have $$P=\sum_{\mathbf i\in\mathrm{Supp}(P)}p_{\mathbf i}\mathbf{ X^ i}=\sum_{\chi: L_\chi\ne \emptyset}\sum_{\mathbf i\in L_\chi}p_{\mathbf i}\mathbf X^ {\mathbf i_{\chi_{\mathbf i}}+L\mathbf u_i}=\sum_{\chi: L_\chi\ne \emptyset}\mathbf X^{\mathbf i_\chi}\sum_{\mathbf i\in L_\chi}p_{\mathbf i}(\mathbf X^L)^{\mathbf u_i}=\sum_{\chi:L_\chi\ne \emptyset}\mathbf X^{\mathbf i_\chi}P_\chi(\mathbf X^L).$$

Suppose that $\mathbf x\in \mathbb G_m^n$ is such that $P(\mathbf xG)=0$. Let $\mathbf g\in G$. Then we have $$0=P(\mathbf {xg})=\sum_{\chi:L_\chi\ne \emptyset}\sum_{\mathbf i\in L_\chi}p_{\mathbf i}\mathbf{ x^i}\chi_{\mathbf i}(\mathbf g)=\sum_{\chi:L_\chi\ne \emptyset}\left(\sum_{\mathbf i\in L_\chi}p_{\mathbf i}\mathbf x^{\mathbf i}\right)\chi(\mathbf g).$$ By Artin's Lemma (Theorem 4.1 in chapter VI in \cite{Lang}) we must have $\sum_{\mathbf i\in L_\chi}p_{\mathbf i}\mathbf x^{\mathbf i}=0$ for all $\chi$.

We compute $$P_\chi(\mathbf x^L)=\sum_{\mathbf i\in L_\chi}p_{\mathbf i}\mathbf x^{L\mathbf{ u_i}}=\sum_{\mathbf i\in L_\chi}p_{\mathbf i}\mathbf x^{\mathbf{i-i}_{\chi_i}}=\mathbf x^{-\mathbf i_\chi}\left(\sum_{\mathbf i\in L_\chi}p_{\mathbf i}\mathbf x^{\mathbf i}\right)=0.$$\end{proof}

\begin{lemma}\label{lem:admP}Let $n\ge 2$, $P\in K[X_1^{\pm 1},\dots, X_n^{\pm 1}]$ and $X=Z(P)$. Then the following are equivalent \begin{enumerate}\item $X$ is admissible.
\item There are no primitive $\mathbf u\in\mathbb{Z}^n\setminus\{0\}$ and root of unity $\zeta\in\mathbb{G}_m$ such that $\mathbf {X^u}-\zeta | P$.
\item There are no $\mathbf u\in\mathbb{Z}^n\setminus\{0\}$ and root of unity $\zeta\in\mathbb{G}_m$ such that $\mathbf {X^u}-\zeta | P$.
\end{enumerate}\end{lemma}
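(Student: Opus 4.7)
The plan is to prove $(3) \Rightarrow (2)$, $(2) \Rightarrow (3)$, $\neg(2) \Rightarrow \neg(1)$, and $\neg(1) \Rightarrow \neg(2)$; the degenerate cases $P = 0$ (where $X = \mathbb{G}_m^n$ is not admissible and any cyclotomic binomial divides $P$) and $P$ a unit (where $X = \emptyset$ is finite and no such divisor exists) are verified by inspection, so I focus on $P$ a non-zero non-unit, for which $X$ is a hypersurface of dimension $n-1$.

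The implication $(3) \Rightarrow (2)$ is trivial. For $(2) \Rightarrow (3)$, I would write any offending $\mathbf{u} \in \mathbb{Z}^n \setminus \{0\}$ as $\mathbf{u} = k\mathbf{v}$ with $k \in \mathbb{N}$ and $\mathbf{v}$ primitive. Since $K$ is algebraically closed,
\[
\mathbf{X^u} - \zeta \;=\; \prod_{\xi^k = \zeta} (\mathbf{X^v} - \xi),
\]
the factors are irreducible by Lemma \ref{lem:irred}, and each $\xi$ is itself a root of unity because $\zeta$ is. Thus any factor of $P$ of the shape forbidden by (3) yields a factor of $P$ of the shape forbidden by (2). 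For $\neg(2) \Rightarrow \neg(1)$, suppose $\mathbf{X^u} - \zeta \mid P$ with $\mathbf{u}$ primitive and $\zeta$ a root of unity. Then $Z(\mathbf{X^u} - \zeta) \subseteq X$, and Lemma \ref{lem:xU} applied to the $n \times 1$ matrix $\mathbf{u}$ (rank one, so nonempty) identifies $Z(\mathbf{X^u} - \zeta)$ as a nonempty finite union of torsion cosets of dimension $n-1$. Hence $X$ is infinite of dimension $n-1$ and contains a top-dimensional torsion coset, contradicting admissibility.

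The main work is $\neg(1) \Rightarrow \neg(2)$. If $X$ is not admissible, then $X$ is infinite, of dimension $n-1$, and contains a torsion coset $C = \boldsymbol{\zeta}G$ of dimension $n-1$, where $G$ is a linear torus of codimension one. The lattice $\Lambda_G \subset \mathbb{Z}^n$ of exponents of characters trivial on $G$ has rank one, and it is saturated because $G$ is connected; thus it is generated by a primitive $\mathbf{u} \in \mathbb{Z}^n$. Put $\eta = \boldsymbol{\zeta}^{\mathbf{u}}$, which is a root of unity. Applying Lemma \ref{lem:artin} to this $G$ with $L = \mathbf{u}$, each $P_\chi$ lies in $K[Y^{\pm 1}]$ and
\[
P \;=\; \sum_{\chi\,:\,L_\chi \ne \emptyset} \mathbf{X}^{\mathbf{i}_\chi} P_\chi(\mathbf{X^u}).
\]
Vanishing of $P$ on $C$ forces, by the last assertion of Lemma \ref{lem:artin}, $P_\chi(\eta) = P_\chi(\boldsymbol{\zeta}^{\mathbf{u}}) = 0$ for every $\chi$ with $L_\chi \ne \emptyset$; hence $(Y - \eta) \mid P_\chi(Y)$ in $K[Y^{\pm 1}]$ for each such $\chi$. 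Factoring this out of the displayed sum gives $(\mathbf{X^u} - \eta) \mid P$, violating (2).

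I expect the only delicate bookkeeping to be identifying $\Lambda_G$ correctly, verifying that its rank-one generator is primitive (which follows from saturation, itself a consequence of the connectedness of $G$), and noting that $\eta$ is a root of unity (immediate from $\boldsymbol{\zeta}$ being torsion). With these in hand, Lemma \ref{lem:artin} converts the geometric vanishing on $C$ directly into an algebraic divisibility in the Laurent polynomial ring, bypassing any appeal to the Nullstellensatz on $\mathbb{G}_m^n$.
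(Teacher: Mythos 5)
Your proof is correct and follows essentially the same route as the paper: Lemma \ref{lem:xU} for $(1)\Rightarrow(2)$, extracting a factor $\mathbf{X^v}-\xi$ of $\mathbf{X^u}-\zeta$ for $(2)\Leftrightarrow(3)$, and Lemma \ref{lem:artin} to convert vanishing on a top-dimensional torsion coset into divisibility; the paper merely closes the cycle as $1\Rightarrow2\Rightarrow3\Rightarrow1$ and therefore does not need the primitivity of the generator of $\Lambda_G$, which you invoke correctly (it follows from Theorem \ref{thm:bij}). One cosmetic point: in characteristic $p$ dividing $k$ the identity $\mathbf{X^u}-\zeta=\prod_{\xi^k=\zeta}(\mathbf{X^v}-\xi)$ holds only with multiplicities, but your argument uses only that one such factor divides $\mathbf{X^u}-\zeta$, which is fine.
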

\begin{proof}If $P = 0$, $X=\mathbb G_m^n$ is not admissible and every $Q\in K[X_1^{\pm 1},\dots ,X_n^{\pm 1}]$ divides $P$. And if $P\in K[X_1^{\pm1 },\dots, X_n^{\pm 1}]^*$ we have $X=\emptyset$ which is admissible. Since $\mathbf {X^u}-\zeta$ is never a unit if $\mathbf u\in\mathbb Z^n\setminus\{0\}$ and $\zeta\in \mathbb G_m^n$, the second and the third statement are also true. Thus we can assume that $P\not\in K[X_1,\dots, X_n]^*\cup \{0\}$ and hence the dimension of $X$ is $n-1\ge 1.$
The implication $1) \Rightarrow 2)$ follows directly from Lemma \ref{lem:xU} by contraposition.

$2)\Rightarrow 3):$ We prove the contraposition and assume that there exists $\boldsymbol\lambda\in\mathbb Z^n\setminus\{0\}$ such that $\mathbf{X}^{\boldsymbol\lambda}-\zeta| P$. Write $\boldsymbol\lambda =g \mathbf u$ for a primitive $\mathbf u\in\mathbb Z^n$ and a $g\ge 1$. Since $X-Y|X^g-Y^g\in\mathbb Z[X,Y]$ and we can write $\zeta=\eta^g$ for some $\eta\in\mathbb G_m$ we find $\mathbf {X^u}-\eta | \mathbf X^{\boldsymbol\lambda}-\zeta$ and hence there exists a primitive $\mathbf u\in\mathbb{Z}^n\setminus\{0\}$ and a root of unity $\eta\in\mathbb{G}_m$ such that $\mathbf {X^u}-\eta | P.$

$3)\Rightarrow 1)$ We show the contraposition and assume that there is an $(n-1)$-dimensional algebraic subgroup $G\subset\mathbb{G}_m^n$ and a torsion point $\boldsymbol \zeta\in\mathbb{G}_m^n$ such that $\boldsymbol \zeta G\subset X$. By Theorem \ref{thm:bij} the rank of $\Lambda_G$ equals one. Thus there exists $\boldsymbol\lambda\in\mathbb{Z}^n\setminus\{0\}$ which is a basis of $\Lambda_G$. We can apply Lemma \ref{lem:artin} and find that $\boldsymbol{\zeta^\lambda}$ is a root of $P_\chi\in K[Y]$ and hence $Y-\boldsymbol{\zeta^\lambda}|P_\chi$ for all characters $\chi: G\to \mathbb G_m^n$ such that $L_\chi\ne \emptyset$. So let $Q_\chi\in K[Y^{\pm 1}]$ be such that $P_\chi=(Y-\boldsymbol{\zeta^\lambda})Q_\chi$. Then again by Lemma \ref{lem:artin} we find $$P=\sum_{\chi:L_\chi\ne \emptyset} \mathbf X^{\mathbf i_\chi}P_\chi(\mathbf X^{\boldsymbol\lambda})=(\mathbf X^{\boldsymbol\lambda}-\boldsymbol{\zeta^\lambda})\sum_{\chi:L_\chi\ne \emptyset} \mathbf X^{\mathbf i_\chi}Q_\chi(\mathbf X^{\boldsymbol\lambda}).$$ Therefore $\mathbf X^{\boldsymbol\lambda}-\boldsymbol{\zeta^\lambda}| P$, where $\boldsymbol\lambda\in\mathbb Z^n\setminus\{0\}$ and $\boldsymbol{\zeta^\lambda}\in\mathbb G_m$ is a torsion point.\end{proof}

\section{Bound for cosets}\label{sec:bdCoset}
Although we already esablished an asymptotic formula for $\#C_T$ if $C$ is a torsion coset, we prove an upper bound which is nice to work with, since it is very simple and there is no error term. 
\begin{definition}For an algebraic subgroup $G<\mathbb G_m^n$ we denote by $G^0$ the linear torus equal to the connected component of $G$ which contains the identity.\end{definition}

\begin{lemma}\label{lem:torsCoset}Let $\boldsymbol\zeta\in(\mathbb{G}_m^n)_{\mathrm{tors}}$ and let $G<\mathbb G_m^n$ be an algebraic subgroup of dimension $d$. Then we have $(\boldsymbol\zeta G)_T\le [G:G^0]T^{d+1}/\mathrm{ord}(\boldsymbol\zeta G)$.\end{lemma}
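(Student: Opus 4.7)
My plan is to reduce to the case of a connected $G$ via a coset decomposition, then to a standard form via a monoidal transform, and finally to count directly. Decompose $G=\bigsqcup_{i=1}^{[G:G^0]}\mathbf g_i G^0$; then $\boldsymbol\zeta G=\bigsqcup_i \boldsymbol\zeta\mathbf g_i G^0$. For each $i$, either $\boldsymbol\zeta\mathbf g_i G^0$ contains no torsion point (contributing $0$ to $(\boldsymbol\zeta G)_T$) or it is itself a torsion coset of the linear torus $G^0$, in which case
\[\mathrm{ord}(\boldsymbol\zeta\mathbf g_i G^0)\ge \mathrm{ord}(\boldsymbol\zeta G),\]
because the minimum defining the left-hand side is taken over a subset of the torsion points of $\boldsymbol\zeta G$. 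So it suffices to prove $\#(\boldsymbol\eta H)_T\le T^{d+1}/\mathrm{ord}(\boldsymbol\eta H)$ for a single torsion coset $\boldsymbol\eta H$ of a linear torus $H$ of dimension $d$, and then sum over the $[G:G^0]$ cosets.

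For the linear torus case, I would invoke Lemma \ref{lem:redTorsCoset} (a monoidal transform preserves orders, as used in the proof of Theorem \ref{thm:countCoset}) to reduce to the standard situation where the coset is $\{\boldsymbol\eta\}\times\mathbb G_m^d$ with $g:=\mathrm{ord}(\boldsymbol\eta)=\mathrm{ord}(\boldsymbol\eta H)$. The order of any torsion point $(\boldsymbol\eta,\boldsymbol\xi)$ in the coset equals $\mathrm{lcm}(g,\mathrm{ord}(\boldsymbol\xi))$, which is a multiple of $g$. If $T<g$ there is nothing to count and the bound is immediate, so assume $T\ge g$. I would partition the contributing $\boldsymbol\xi$ according to the index $k:=\mathrm{lcm}(g,\mathrm{ord}(\boldsymbol\xi))/g\in\{1,\dots,\lfloor T/g\rfloor\}$. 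For fixed $k$, the condition $\mathrm{ord}(\boldsymbol\xi)\mid gk$ forces $\boldsymbol\xi\in\mu_{gk}^d$, a set of size at most $(gk)^d$ because $\#\mu_m\le m$ in any characteristic (cf.\ Lemma \ref{lem:numberOfFixedOrder}).

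Summing over $k$ and using the crude bound $\sum_{k=1}^{N}k^d\le N\cdot N^d=N^{d+1}$ then gives
\[\#(\{\boldsymbol\eta\}\times\mathbb G_m^d)_T\le \sum_{k=1}^{\lfloor T/g\rfloor}(gk)^d\le g^d(T/g)^{d+1}=T^{d+1}/g,\]
which is the desired inequality for a torsion coset of a linear torus. Adding this estimate over the $[G:G^0]$ cosets $\boldsymbol\zeta\mathbf g_i G^0$ from the first paragraph yields $\#(\boldsymbol\zeta G)_T\le [G:G^0]T^{d+1}/\mathrm{ord}(\boldsymbol\zeta G)$. I do not expect any genuine obstacle: the counting step is essentially the main-term calculation of Theorem \ref{thm:countCoset} stripped of its error terms, and the only nontrivial input is the normal-form reduction provided by Lemma \ref{lem:redTorsCoset}.
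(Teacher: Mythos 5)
Your proposal is correct, and its skeleton coincides with the paper's: the same decomposition of $\boldsymbol\zeta G$ into $[G:G^0]$ cosets of $G^0$ with the observation $\mathrm{ord}(\boldsymbol\zeta_i G^0)\ge\mathrm{ord}(\boldsymbol\zeta G)$, and the same reduction via Lemma \ref{lem:redTorsCoset} to the standard coset $\{\boldsymbol\eta\}\times\mathbb G_m^d$. Where you genuinely diverge is in the counting step, which is the heart of the lemma. The paper counts points of each exact order $m$ using Jordan's totient (Lemma \ref{lem:numberOfFixedOrder}), splits the sum over the divisors $e=\gcd(g,m)$ of $g$, and then needs the two arithmetic facts $J_d(ke)\le J_d(e)k^d$ and $\sum_{e\mid g}J_d(e)=g^d$ to close the estimate. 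You instead partition the admissible $\boldsymbol\xi$ by the integer $k=\mathrm{lcm}(g,\mathrm{ord}(\boldsymbol\xi))/g\le T/g$ and bound the $k$-th class crudely by $\#\mu_{gk}^d\le(gk)^d$, so that $\sum_{k\le T/g}(gk)^d\le g^d(T/g)^{d+1}=T^{d+1}/g$. This avoids Jordan's totient and Möbius inversion entirely and is strictly more elementary; the price is heavy overcounting within each class, which is harmless here because the crude bound $\sum_{k\le N}k^d\le N^{d+1}$ is all the paper uses anyway. Both routes land on exactly the constant-free bound $T^{d+1}/g$, so nothing is lost.
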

\begin{proof}Let us first assume that $G$ is a linear torus. In this case we can assume that $\boldsymbol{\zeta}G=\{\boldsymbol\eta\}\times \mathbb G_m^d$ for some $\boldsymbol\eta\in(\mathbb G_m^{n-d})_{\mathrm{tors}}$ of order $g:=\mathrm{ord}(\boldsymbol\zeta G)$ by Lemma \ref{lem:redTorsCoset}. Thus the order of $(\boldsymbol\eta,\boldsymbol\xi)$ is $\mathrm{lcm}(g,\mathrm{ord}(\boldsymbol\xi))$ for any $\boldsymbol\xi \in (\mathbb G_m^{d})_\mathrm{tors}$. Hence $(\boldsymbol\eta,\boldsymbol\xi)\in (\boldsymbol\zeta G)_T$ if and only if ord$(\boldsymbol\xi)\le \gcd(g,\mathrm{ord}(\boldsymbol\xi))T/g$. By Lemma \ref{lem:numberOfFixedOrder} the number of elements of order $N$ in $\mathbb{G}_m^d$ is $J_d(N)$ if $p=\mathrm{char}(K)=0$ or $(N,p)=1$ and zero otherwise. In particular $J_d(N)$ is always an upper bound. Hence we find $$\#(\boldsymbol\eta G)_T\le \sum_{1\le m\le \gcd(g,m)T/g} J_d(m)=\sum_{e|g}\sum_{m: \gcd(g,m)=e, m\le eT/g} J_d(m).$$
With the formula $J_d(n)=n^d\prod_{p|n}(1-p^{-d})$ one can see that $J_d(ab)\le a^dJ_d(b)$ for all $a,b\in\mathbb N$. And since $J_d=\mu\star I_d$, we have $\sum_{m|n}J_d(m)=n^d$ for all $n\in\mathbb N$ by the Möbius inversion formula. If $d|g$ and $m$ is as in the inner sum, then we can write $m=ke$ for some $k\le T/g$. Thus we find $$\#(\boldsymbol\eta G)_T\le \sum_{e|g}\sum_{1\le k\le T/g} J_d(ke)\le \sum_{e|g}J_d(e)\sum_{1\le k\le T/g} k^d\le g^d(T/g)^{d+1}\le T^{d+1}/g.$$

For the general case we decompose $\boldsymbol\zeta G$ into a disjoint union of $[G:G^0]$ torsion cosets $\boldsymbol\zeta_i G^0$ and observe that $\mathrm{ord}(\boldsymbol\zeta_i G^0)\ge \mathrm{ord}(\boldsymbol\zeta G)$ since $\boldsymbol\zeta_i G^0\subset \boldsymbol\zeta G$. Therefore $$\#(\boldsymbol\zeta G)_T=\sum_{i=1}^{[G:G^0]}(\boldsymbol\zeta_i G^0)_T\le \sum_{i=1}^{[G:G^0]}T^{d+1}/\mathrm{ord}(\boldsymbol\zeta_i G^0)\le[G:G^0]T^{d+1}/\mathrm{ord}(\boldsymbol\zeta G).\qedhere$$\end{proof}

\section{Hypersurfaces}\label{sec:hyp}

In this section we prove the main theorem for hypersurfaces. It remains to understand the points which are contained in a positive dimensional torsion coset. Since any such torsion coset is contained in a maximal one, we have to study them. Let $\boldsymbol\zeta G$ be a maximal coset. There are only finitely many possibilities for $G$. But it may happen that there are infinitely many torsion cosets $\boldsymbol\zeta G$ which are contained in the hypersurface. We construct a map from the torsion cosets of $G$ to torsion points of a variety of lower dimension and get a bound by induction.

\begin{lemma}\label{lem:torsCosets}Let $n\ge2$ and $X$ an admissible hypersurface. Let $G\subset\mathbb{G}_m^n$ be a linear torus of dimension $d$. Let $r=n-d$ and $L\in \mathrm{Mat}_{n\times r}(\mathbb Z)$ such that $L\mathbb Z^r=\Lambda_G$. Then there exist an admissible hypersurface $Y\subset \mathbb G_m^r$ and finitely many torsion cosets $C_1,\dots, C_m\subset\mathbb{G}_m^r$ of dimension at most $r-2$ with the property that for every $T\ge 1$ the map $\boldsymbol\zeta G\mapsto \boldsymbol\zeta^L$ is an injection from the torsion cosets $\boldsymbol\zeta G\subset X$ of order bounded by $T$ to $Y_T\cup \bigcup_{i=1}^m (C_i)_T$.\end{lemma}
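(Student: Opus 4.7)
The plan is to translate the condition $\boldsymbol{\zeta}G\subset X$ into an equation on the quotient coordinate $\boldsymbol{\zeta}^L\in\mathbb{G}_m^r$, and then split the resulting locus into an admissible hypersurface (contributing $Y$) and a residual piece cut out by torsion binomials (covered by the $C_i$).

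Write $X=Z(P)$ and apply Lemma \ref{lem:artin} to the subgroup $G$ with the basis $L$ of $\Lambda_G$. This yields a decomposition $P=\sum_{\chi:L_\chi\ne\emptyset}\mathbf{X}^{\mathbf{i}_\chi}P_\chi(\mathbf{X}^L)$ and the equivalence that $\boldsymbol{\zeta}G\subset X$ if and only if $P_\chi(\boldsymbol{\zeta}^L)=0$ for every $\chi$ with $L_\chi\ne\emptyset$. The map $\boldsymbol{\zeta}G\mapsto\boldsymbol{\zeta}^L$ is well-defined because $\mathbf{g}^L=\mathbf{1}$ for $\mathbf{g}\in G$, and injective because $\{\mathbf{x}\in\mathbb{G}_m^n:\mathbf{x}^L=\mathbf{1}\}=G$, using that $\Lambda_G$ is saturated (via Theorem \ref{thm:bij}). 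The bound $\mathrm{ord}(\boldsymbol{\zeta}^L)\le\mathrm{ord}(\boldsymbol{\zeta}G)$ follows by picking a minimal-order representative of the coset, as two elements in $\boldsymbol{\zeta}G$ have the same image.

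Set $R=\gcd_\chi P_\chi$ and $P_\chi=R\,Q_\chi$, so the family $\{Q_\chi\}$ is coprime. Further factor $Q_\chi=A_\chi\tilde{Q}_\chi$, where $A_\chi$ collects all irreducible factors of the form $\mathbf{Y}^{\mathbf{u}}-\eta$ with $\mathbf{u}$ primitive and $\eta$ a root of unity, and $\tilde{Q}_\chi$ contains none. Define
\[
Y=Z\bigl(R\cdot\textstyle\prod_\chi\tilde{Q}_\chi\bigr)\subset\mathbb{G}_m^r.
\]
To see $Y$ is admissible, note that its defining polynomial has no binomial factor $\mathbf{Y}^{\mathbf{u}}-\eta$: the $\tilde{Q}_\chi$-parts are free of such by construction, and any such factor of $R$ would divide every $P_\chi(\mathbf{X}^L)$ and hence $P$, giving $\mathbf{X}^{L\mathbf{u}}-\eta\mid P$ and contradicting the admissibility of $X$ by Lemma \ref{lem:admP}. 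Lemma \ref{lem:admP} applied to $Y$ then yields admissibility. Edge cases (the product being a unit, or only one character contributing) are handled directly: the image is then entirely captured by one of the two pieces of $Y_T\cup\bigcup_i(C_i)_T$, so $Y$ or the list of $C_i$ may be taken to be arbitrary.

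For the $C_i$: since the $A_\chi$ divide the coprime family $\{Q_\chi\}$, the $A_\chi$ are themselves coprime; assuming at least two characters contribute, Lemma \ref{lem:R} applied in $\mathbb{G}_m^r$ yields finitely many torsion cosets $C_1,\ldots,C_m$ of dimension at most $r-2$ whose union equals $\bigcap_\chi Z(A_\chi)$. For any torsion $\boldsymbol{\zeta}$ with $\boldsymbol{\zeta}G\subset X$, either $R(\boldsymbol{\zeta}^L)=0$ or some $\tilde{Q}_\chi(\boldsymbol{\zeta}^L)=0$ (placing $\boldsymbol{\zeta}^L$ in $Y$), or every $A_\chi(\boldsymbol{\zeta}^L)=0$ (placing it in some $C_i$). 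Combined with $\mathrm{ord}(\boldsymbol{\zeta}^L)\le\mathrm{ord}(\boldsymbol{\zeta}G)\le T$, this gives the required injection into $Y_T\cup\bigcup_i(C_i)_T$. The main technical obstacle is the admissibility transfer from $X$ to $Y$: it rests on the fact that a binomial factor of $R$ propagates to a binomial factor of $P$ via the substitution $\mathbf{Y}\mapsto\mathbf{X}^L$, together with the bookkeeping needed to handle the degenerate cases cleanly.
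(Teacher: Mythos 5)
Your proof is correct and follows essentially the same route as the paper: apply Lemma \ref{lem:artin} to get the $P_\chi$, strip off the torsion-binomial factors (which cannot be common to all $P_\chi$ because such a factor would pull back to a factor $\mathbf X^{L\mathbf u}-\eta$ of $P$, contradicting admissibility via Lemma \ref{lem:admP}), send the binomial part to the low-dimensional cosets of Lemma \ref{lem:R} and the rest to the admissible hypersurface $Y$, with injectivity coming from $H_{\Lambda_G}=G$. Your reorganization via the gcd $R$ is only cosmetically different from the paper's direct factorization $P_\chi=Q_\chi R_\chi$ (the resulting $Y$ has the same zero set), and the only point you leave implicit --- that $L\mathbf u\neq 0$ for primitive $\mathbf u$ --- is immediate since $L$ has full column rank.
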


\begin{proof}Without loss of generality we can assume that there exist torsion cosets of $G$ which are contained in $X$. Thus by admissibility we have $d\le n-2$ and hence $r\ge 2$. Let $P\in K[X_1^{\pm 1},\dots, X_n^{\pm 1}]$ such that $X=Z(P)$. Let $P_\chi\in K[Y_1^{\pm 1},\dots, Y_r^{\pm 1}]$ be defined as in Lemma \ref{lem:artin} for all $\chi:G\to \mathbb G_m^n$ such that $L_\chi\ne \emptyset$. For any such $\chi$ let $R_\chi$ be the product of all factors of $P_\chi$ of the form $\mathbf{ Y^u}-\zeta$ for some primitive $\mathbf u \in \mathbb Z^r\setminus \{0\}$ and $\zeta\in\mathbb G_m$ of finite order, which are irreducible by Lemma \ref{lem:irred}, counted with multiplicity. Let $Q_\chi\in K[Y_1^{\pm 1},\dots, Y_r^{\pm 1}]$ such that $P_\chi=Q_\chi R_\chi.$
Define $Q=\prod_{\chi:L_\chi\ne \emptyset} Q_\chi\in K[Y_1^{\pm 1},\dots,Y_r^{\pm 1}]$ and let $Y=Z(Q)\subset \mathbb G_m^r$. Since $r\ge 2$ we can apply Lemma \ref{lem:admP} and see that $Y$ is admissible by construction of $Q$.\\ We claim that the $R_\chi$ are coprime. Assume by contradiction that they have a common factor $F=\mathbf {Y^u}-\zeta$, where $\mathbf u\in\mathbb{Z}^r\setminus\{0\}$ and $\zeta$ is a root of unity and write $R_\chi=FR_\chi'$. Then we find by Lemma \ref{lem:artin} that $$P=\sum_{\chi:L_\chi\ne \emptyset}\mathbf X^{\mathbf i_\chi}P_\chi (\mathbf X^L)=F(\mathbf X^L)\sum_{\chi:L_\chi\ne \emptyset}\mathbf X^{\mathbf i_\chi}Q_\chi (\mathbf X^L)R'_\chi (\mathbf X^L).$$ 
Thus $\mathbf X^{L\mathbf u}-\zeta$ divides $P$. We now show that $L\mathbf u\ne 0$. Let $\chi$ be such that $L_\chi\neq \emptyset$. Then $P_\chi\neq 0$. Since $\mathbf u\neq 0, \zeta\neq 0$ and $F|P_\chi$ there exist $\mathbf{i\neq j}\in L_\chi$ and a natural number $k$ such that $k\mathbf {u=u_i-u_j}$ by Lemma \ref{lem:divide}. We have $kL\mathbf u = L(\mathbf{u_i-u_j)=i-j}\neq 0$ since $\chi_{\mathbf i}=\chi = \chi_{\mathbf j}$. Therefore $L\mathbf u \in\mathbb{Z}^n\setminus\{0\}$ and $\mathbf X^{L\mathbf u}-\zeta | P$ which contradicts the fact that $X$ is admissible by  Lemma \ref{lem:admP}. So we showed that the $R_\chi$ are coprime. We can therefore apply Lemma \ref{lem:R} and see that the common zeros in $\mathbb{G}_m^r$ is a finite union of torsion cosets $C_1,\dots, C_m$ of dimension at most $r-2$.\\
Let $T\ge 1$ and $\boldsymbol\zeta G$ be such that $\boldsymbol\zeta G\subset X$ and $\mathrm{ord}(\boldsymbol\zeta G)\le T.$ We claim that $\boldsymbol\zeta G\mapsto\boldsymbol\zeta^L$ is well defined and injective. This is a well defined map, because for $\mathbf x\in G$ we have $(\boldsymbol\zeta\mathbf x)^L=\boldsymbol\zeta^L$ since the columns of $L$ are in $\Lambda_G$. To prove injectivity assume that $\boldsymbol\zeta^L=\boldsymbol\xi^L$. Let $\mathbf x=\boldsymbol{\zeta\xi}^{-1}$ and $\boldsymbol\lambda\in\Lambda_G$. Then there exists $\mathbf u\in\mathbb{Z}^r$ such that $\boldsymbol\lambda=L\mathbf u$. Therefore $\mathbf x^{\boldsymbol\lambda}=\boldsymbol\zeta^{L\mathbf u}\boldsymbol\xi^{-L\mathbf u}=\mathbf 1^{\mathbf u}=1$ which shows that $\mathbf x\in G$ by Theorem \ref{thm:bij} and hence $\boldsymbol\zeta G=\boldsymbol\xi G$. Furthermore the order of the image is bounded by the order of the coset.

By Lemma \ref{lem:artin} we have $P_\chi(\boldsymbol\zeta^L)=0$. If there is $\chi$ such that $Q_\chi(\boldsymbol\zeta^L)=0$ the element $\boldsymbol\zeta^L$ is a root of $Q$ of order bounded by $T$ and hence in $Y_T$. If not, it is a common root of the $R_\chi$ and hence contained in the finite union of the torsion cosets of dimension at most $r-2$.\end{proof}

\begin{lemma}\label{lem:sum}Let $(a_n)\in\mathbb{R}_{\ge0}^{\mathbb N}$ be a sequence, $\alpha>1$ a real number and suppose that there is $c>0$ such that $\sum_{n=1}^{T} a_n \le c T^\alpha$ for all $T\in\mathbb N$. Then there is $C=C(c,\alpha)>0$ such that $\sum_{n=1}^{T} a_n/n \le C T^{\alpha-1}$ for all $T \in \mathbb N$.\end{lemma}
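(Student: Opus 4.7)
The plan is to prove this by Abel summation, essentially reproducing in the abstract setting the trick already used in the introduction to pass from the $T^{3/2}$ bound on Fink's curve to the $T^{5/2}$ bound on $Y$. The point is that dividing by $n$ and summing is, up to boundary terms, the same as integrating $\sum_{n\le k} a_n$ against $d(1/k)$, and the hypothesis controls exactly those partial sums.

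Concretely, I would start from the telescoping identity
\begin{equation*}
\frac{1}{n}=\frac{1}{T}+\sum_{k=n}^{T-1}\left(\frac{1}{k}-\frac{1}{k+1}\right)=\frac{1}{T}+\sum_{k=n}^{T-1}\frac{1}{k(k+1)}\qquad(1\le n\le T),
\end{equation*}
which is valid for every $n\le T$ (the inner sum being empty when $n=T$). Plugging this into $\sum_{n=1}^T a_n/n$ and exchanging the order of summation (which is legitimate since the $a_n$ are nonnegative) yields
\begin{equation*}
\sum_{n=1}^{T}\frac{a_n}{n}=\frac{1}{T}\sum_{n=1}^{T}a_n+\sum_{k=1}^{T-1}\frac{1}{k(k+1)}\sum_{n=1}^{k}a_n.
\end{equation*}

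Now I apply the hypothesis $\sum_{n=1}^{k}a_n\le ck^\alpha$ to each inner sum, which gives
\begin{equation*}
\sum_{n=1}^{T}\frac{a_n}{n}\le cT^{\alpha-1}+c\sum_{k=1}^{T-1}\frac{k^\alpha}{k(k+1)}\le cT^{\alpha-1}+c\sum_{k=1}^{T-1}k^{\alpha-2}.
\end{equation*}
Since $\alpha>1$ we have $\alpha-2>-1$, so comparing the sum with the integral of $x^{\alpha-2}$ on $[1,T]$ gives $\sum_{k=1}^{T-1}k^{\alpha-2}\le C'(\alpha)\,T^{\alpha-1}$ for some constant depending only on $\alpha$ (and one must treat the borderline case $\alpha=1$ separately, but it is excluded by hypothesis). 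Collecting terms yields the desired bound with $C=C(c,\alpha)=c(1+C'(\alpha))$.

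There is no real obstacle here; the only mild point is being careful with the case $n=T$ (where the inner sum in the telescoping identity is empty) and with whether the hypothesis $\sum_{n\le T}a_n\le cT^\alpha$ is assumed only at integer $T$ (it is, which is fine because the partial sums $\sum_{n=1}^{k}a_n$ appearing after swapping are already indexed by integers). The condition $\alpha>1$ is used in precisely one place, to guarantee convergence of $\sum k^{\alpha-2}$ up to the right power of $T$.
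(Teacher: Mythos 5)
Your proof is correct and is essentially identical to the paper's own argument: the same telescoping identity for $1/n$, the same interchange of summation, and the same comparison of $\sum_{k=1}^{T-1}k^{\alpha-2}$ with an integral using $\alpha-2>-1$. Nothing further is needed.
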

\begin{proof} Note that for all $n<T$ we have $\frac{1}{n}=\frac{1}{T}+\sum_{k=n}^{T-1}\left(\frac{1}{k}-\frac{1}{k+1}\right)=\frac{1}{T}+\sum_{k=n}^{T-1}\frac{1}{k(k+1)}$. Comparing the sum with the integral and using $\alpha-2>-1$ we can write \begin{align*}
\sum_{n=1}^T \frac{a_n}{n}&=\sum_{n=1}^{T-1}\sum_{k=n}^{T-1} \frac{a_n}{k(k+1)} +\frac{1}{T}\sum_{n=1}^T a_n
=\sum_{k=1}^{T-1} \frac{1}{k(k+1)}\sum_{n=1}^{k} a_n+\frac{1}{T}\sum_{n=1}^T a_n\\
&\leq c\sum_{k=1}^{T-1}{k}^{\alpha-2}+cT^{\alpha-1}\ll_\alpha cT^{\alpha-1}.\qedhere
\end{align*}\end{proof}

\begin{lemma}\label{lem:ind}Let $n\ge 3, X=Z(P)$ an admissible hypersurface in $\mathbb G_m^n$ and $1\le d\le n-2$. Assume that we know that $\#Y_T\ll_{Y} T^{r-1/r}$ for all admissible hypersurfaces $Y$ in $\mathbb G_m^r$, where $2\le r\le n-1$. Let $G<\mathbb G_m^n$ be a linear torus of dimension $d$. Then $$\# \left(\bigcup_{\substack{\boldsymbol\eta\in(\mathbb{G}_m^n)_{\text{tors}}\\\boldsymbol\eta G\subset X}} (\boldsymbol\eta G)_T\right)\ll_{X,G} T^{n-1/(n-d)} \text{ for all }T\ge 1.$$\end{lemma}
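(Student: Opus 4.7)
The plan is to bound the number of torsion cosets $\boldsymbol\eta G \subset X$ of each possible order via Lemma \ref{lem:torsCosets} and the induction hypothesis, then combine this with the pointwise bound for a single coset from Lemma \ref{lem:torsCoset}, reorganising the contributions by the order of the coset so that Lemma \ref{lem:sum} can be applied.

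Set $r = n - d$, so that $2 \le r \le n-1$. For a torsion coset $\boldsymbol\eta G$ with $\mathrm{ord}(\boldsymbol\eta G) = k$, Lemma \ref{lem:torsCoset} (applied with the linear torus $G$, where $G^0 = G$) gives $\#(\boldsymbol\eta G)_T \le T^{d+1}/k$. Let $a_k$ denote the number of torsion cosets $\boldsymbol\eta G \subset X$ of order exactly $k$; since $(\boldsymbol\eta G)_T$ is empty when $k > T$, the union in the statement has cardinality at most $T^{d+1} \sum_{k=1}^T a_k/k$. To control the $a_k$ I would invoke Lemma \ref{lem:torsCosets}: it produces an admissible hypersurface $Y \subset \mathbb{G}_m^r$ and finitely many torsion cosets $C_1,\dots,C_m \subset \mathbb{G}_m^r$ of dimension at most $r-2$, together with an order-non-increasing injection from the torsion cosets $\boldsymbol\eta G \subset X$ of order at most $T$ into $Y_T \cup \bigcup_i (C_i)_T$. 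The induction hypothesis gives $\#Y_T \ll_Y T^{r - 1/r}$, and Lemma \ref{lem:torsCoset} gives $\#(C_i)_T \ll_{C_i} T^{r-1}$; since $r \ge 2$ implies $r - 1 < r - 1/r$, we conclude
\[ \sum_{k=1}^T a_k \ll_{X,G} T^{r - 1/r}. \]

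Finally, since $r \ge 2$ the exponent $\alpha = r - 1/r \ge 3/2 > 1$, so Lemma \ref{lem:sum} applied to $(a_k)$ yields $\sum_{k=1}^T a_k/k \ll_{X,G} T^{r - 1/r - 1}$. Multiplying by $T^{d+1}$ gives the overall exponent $d + 1 + (r - 1/r) - 1 = d + r - 1/r = n - 1/(n-d)$, which is the desired bound. The only real point to check is the bookkeeping of constants: the $Y$ and the $C_i$ produced by Lemma \ref{lem:torsCosets} depend only on $X$ and $G$, not on $T$, so the implicit constants are of the required form $\ll_{X,G}$. I expect the main technical step, already accomplished in Lemma \ref{lem:torsCosets}, to be the injectivity and order-control of the map $\boldsymbol\eta G \mapsto \boldsymbol\eta^L$; given that, the argument above is essentially a summation by parts combining the inductive hypothesis with the single-coset estimate.
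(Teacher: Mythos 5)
Your proposal is correct and follows essentially the same route as the paper's proof: bound each coset via Lemma \ref{lem:torsCoset}, count cosets of bounded order via the injection of Lemma \ref{lem:torsCosets} together with the induction hypothesis and the coset bound for the $C_i$, and then apply Lemma \ref{lem:sum} to the resulting partial-sum estimate. The exponent bookkeeping and the observation that the implicit constants depend only on $X$ and $G$ match the paper exactly.
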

\begin{proof}Let us denote by $M$ the quantity on the left. Note that $\mathrm{ord}(\boldsymbol\eta G)>T$ implies that $(\boldsymbol\eta G)_T=\emptyset$. Thus we have only to consider cosets of order bounded by $T$. Let $a_N$ be the number of torsion cosets $\boldsymbol\eta G$ such that $\boldsymbol\eta G\subset X$ and $\mathrm{ord}(\boldsymbol\eta G)= N.$ Then we can use Lemma \ref{lem:torsCoset} to find the bound $M\ll_G T^{d+1}\sum_{N=1}^T a_N/N$. Let $r=n-d\le n-1$ and $Y\subset\mathbb{G}_m^r$ and $C_1,\dots, C_m \subset\mathbb{G}_m^r$ be the admissible hypersurface and the torsion cosets from Lemma \ref{lem:torsCosets}. By hypothesis there exists a constant $c=c(Y)$ such that $\#Y_T\le cT^{r-1/r}$ and by Lemma \ref{lem:torsCoset}  we have $\#(C_i)_T\le T^{r-1}$ for all $T\ge 1$. So by Lemma \ref{lem:torsCosets} for all $N\ge 1$ we have $$\sum_{N=1}^T a_N\le \#(Y)_T+\sum_{i=1}^m \#(C_i)_T\le (c+m)T^{r-1/r}.$$
Since $r\ge 2$ we have $r-1/r\ge 3/2>1$ and therefore $\sum_{N=1}^T a_N/N\ll_{Y} T^{r-1/r-1}$ by Lemma \ref{lem:sum}.
So we get $M\ll_{X,G} T^{d+1+r-1-1/r}=T^{n-1/{(n-d)}}.$\end{proof}

\begin{lemma}Let $n\in\mathbb{N}$ and $X=Z(P)$ a hypersurface in $\mathbb G_m^n$. Then $\#X_T\ll_X T^{n}.$ If $n=1$ we even have $\#X_T\ll_X 1$.\end{lemma}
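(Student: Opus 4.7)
My plan is to prove the lemma by induction on $n$. The base case $n=1$ is immediate: $P$ is a nonzero, non-unit Laurent polynomial in one variable, so $X=Z(P) \subset \mathbb G_m$ is a finite set and $\#X_T \le \#X \ll_X 1$.

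For the inductive step, assume $n \ge 2$ and that the statement holds in $\mathbb G_m^r$ for every $1 \le r < n$. I would write $P=\sum_{i\in\mathbb Z} P_i(X_1,\dots,X_{n-1})X_n^i$ with $P_i \in K[X_1^{\pm 1},\dots,X_{n-1}^{\pm 1}]$, and set $V=\bigcap_i Z(P_i) \subset \mathbb G_m^{n-1}$, the locus on which $P(\boldsymbol\xi,Y)$ vanishes identically as a Laurent polynomial in $Y$. Points of $X_T$ split into those $(\boldsymbol\xi,y)$ with $\boldsymbol\xi \notin V$ and those with $\boldsymbol\xi \in V$.

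For the first group, $P(\boldsymbol\xi,Y)$ is a nonzero Laurent polynomial whose Laurent degree is bounded by a constant $D$ depending only on $P$ (the spread of indices $i$ with $P_i\ne 0$). Hence each such $\boldsymbol\xi$ admits at most $D$ values of $y$, and since $\boldsymbol\xi$ itself must be a torsion point of $\mathbb G_m^{n-1}$ of order at most $T$, the count is $\le D\cdot \#(\mathbb G_m^{n-1})_T$. By Lemma \ref{lem:numberOfFixedOrder} and the trivial estimate $J_{n-1}(k)\le k^{n-1}$, this is bounded by $D\sum_{k=1}^T k^{n-1}\ll_X T^n$.

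For the second group, $V\times\mathbb G_m \subset X$, so it suffices to bound $\#(V\times\mathbb G_m)_T$. Since $P\ne 0$, some $P_{i_0}$ is nonzero; if $P_{i_0}$ is a unit then $V=\emptyset$ and there is nothing to do, otherwise $Z(P_{i_0})$ is a hypersurface in $\mathbb G_m^{n-1}$ containing $V$. Decomposing over the first factor,
$$\#(Z(P_{i_0})\times\mathbb G_m)_T=\sum_{\boldsymbol\xi\in Z(P_{i_0})_T}\#(\{\boldsymbol\xi\}\times\mathbb G_m)_T\le T^2\sum_{\boldsymbol\xi\in Z(P_{i_0})_T}\frac{1}{\mathrm{ord}(\boldsymbol\xi)}$$
by Lemma \ref{lem:torsCoset} applied to the coset $\{\boldsymbol\xi\}\times\mathbb G_m$, whose order equals $\mathrm{ord}(\boldsymbol\xi)$. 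When $n=2$, $Z(P_{i_0})\subset\mathbb G_m$ is finite by the base case, so the reciprocal sum is $O_X(1)$ and the total is $\ll_X T^2$. When $n\ge 3$, the inductive hypothesis gives $\#Z(P_{i_0})_T\ll_X T^{n-1}$; since $n-1\ge 2>1$, Lemma \ref{lem:sum} turns this into $\sum 1/\mathrm{ord}(\boldsymbol\xi)\ll_X T^{n-2}$, and multiplying by $T^2$ gives the desired $\ll_X T^n$.

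The main obstacle is the vertical part $V\times\mathbb G_m$, because $V$ itself may have dimension strictly less than $n-2$ and therefore cannot be handled directly by the induction hypothesis, which is only available for hypersurfaces. The trick is to enlarge $V$ to the hypersurface $Z(P_{i_0})$ in $\mathbb G_m^{n-1}$ and then appeal to Lemma \ref{lem:sum} to convert the $T^{n-1}$ counting bound on $Z(P_{i_0})_T$ into the $T^{n-2}$ bound on $\sum 1/\mathrm{ord}(\boldsymbol\xi)$ that is needed to absorb the $T^2$ factor coming from the one-dimensional fibres.
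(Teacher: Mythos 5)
Your proof is correct and follows essentially the same route as the paper's: induction on $n$, a split of $X_T$ into a part with finite fibres (bounded by $D\cdot\#(\mathbb G_m^{n-1})_T\ll T^n$) and a vertical part lying over the zero set of a coefficient polynomial of $P$, which is then handled by Lemma \ref{lem:torsCoset}, the induction hypothesis, and Lemma \ref{lem:sum}. The only cosmetic difference is that the paper cuts out the locus $Z(R)$ for $R$ an extreme coefficient $R_{\pm D}$, whereas you use the exact vanishing locus $V=\bigcap_i Z(P_i)$ and then enlarge it to $Z(P_{i_0})$; both reductions land on the same hypersurface argument in $\mathbb G_m^{n-1}$.
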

\begin{proof}The proof is by induction on $n$. If $n=1$ the set $Z(P)$ is finite and hence the lemma is true in this case. So assume that $n\ge 2$ and the lemma is true in dimension smaller than $n$. Write $P=\sum_{i=-D}^D R_i(X_1,\dots, X_{n-1})X_n^i$. Let $R\in\{R_{-D}, R_D\}\subset K[X_1^{\pm 1},\dots,X_{n-1}^{\pm 1}]$ be a nonzero Laurent polynomial which exists since $P\ne 0$. To estimate $\#Z(P)_T$ we divide $X_T$ into two sets: $Y_1=\{(x_1,\dots,x_n)\in X_T: R(x_1,\dots, x_{n-1})\ne 0\}$ and $Y_2=X_T\setminus Y_1$. First we bound $\#Y_1$. Fix $\mathbf x=(x_1,\dots, x_{n-1})\in \mathbb G_m^{n-1}$ and assume that $R(\mathbf x)\ne 0$. Then there are at most $2D$ solutions $x\in\mathbb G_m$ satisfying $P(\mathbf x, x)=0$. We can bound $\#Y_1\le 2D(\mathbb G_m^{n-1})_T\le 2DT^n$ by Lemma \ref{lem:torsCoset}. If $R\in K[X_1^{\pm 1},\dots, X_n^{\pm1}]^*$, then we have $X_T=Y_1$ and the claimed bound holds.\\
Otherwise we also have to bound $\#Y_2$ and $V=Z(R)$ is a hypersurface. Let $G=\{1\}\times \dots\times\{ 1\}\times \mathbb G_m$. Then $$Y_2 \subset \bigcup_{\mathbf x\in V_T} ((\mathbf x,1)G)_T.$$ The order of a coset $(\mathbf x,1)G$ equals $\mathrm{ord}(\mathbf x)$. Let $c$ be a constant such that $\#V_T\le cT^{n-1}$ for all $T$ which exists by induction. Let $a_N$ be the number of points of order $N$ in $V$. Thus by Lemma \ref{lem:torsCoset} we have to bound the sum $$\#Y_2\ll  \sum_{\mathbf x\in V_T}T^2/\mathrm{ord}(\mathbf x)=T^2 \sum_{N=1}^T a_N/N.$$ By induction we know that $\sum_{N=1}^T a_N=\#V_T \ll_V 1$ if $n=2$ and $\sum_{N=1}^T a_N=\#V_T\ll_V T^{n-1}$ if $n>2$. So if $n=2$ we find $\#Y_2\ll T^2 \sum_{N=1}^T a_N/N \ll_R T^2$. If $n>2$ we have $n-1>1$, so we can apply Lemma \ref{lem:sum} and find $\# Y_2\ll T^2 \sum_{N=1}^T a_N/N \ll_{n,R} T^{2+(n-1)-1}=T^n$. Thus we have $\#Y_2\ll_{n,P} T^n$ and hence $\#X_T=\#Y_1 +\#Y_2 \ll_X T^n$.\end{proof}

\begin{theorem}\label{thm:pol}Let $n\ge 2$, $X$ be an admissible hypersurface in $\mathbb G_m^n$. Then $\#X_T\ll_{X} T^{n-1/n}.$\end{theorem}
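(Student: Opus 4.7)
My plan is to prove the theorem by induction on $n \ge 2$. The base case $n = 2$ is essentially immediate: an admissible hypersurface $X \subset \mathbb{G}_m^2$ is a curve containing no $1$-dimensional torsion coset, so $X = X^*$, and Lemma \ref{lem:D0} gives $\#X_T = \#X^*_T \ll_X T^{2 - 1/2}$.

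For the inductive step with $n \ge 3$, I would assume the bound for admissible hypersurfaces in $\mathbb{G}_m^r$ with $2 \le r \le n - 1$ and decompose
\begin{equation*}
\#X_T = \#X^*_T + \#(X \setminus X^*)_T,
\end{equation*}
where $X \setminus X^*$ is the union of all positive-dimensional torsion cosets contained in $X$. The first piece is handled directly by Lemma \ref{lem:D0}, giving $\#X^*_T \ll_X T^{n - 1/n}$.

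For the second piece, I would use that every positive-dimensional torsion coset in $X$ is contained in a maximal one (since dimensions are bounded and each ascending chain stabilises), and that, as asserted in the preamble of this section, only finitely many linear tori $G_1, \ldots, G_N$ arise as underlying tori of such maximal cosets. Each $G_j$ has dimension $d_j$ with $1 \le d_j \le n - 2$, the upper bound coming from admissibility of the $(n-1)$-dimensional hypersurface $X$. Thus
\begin{equation*}
(X \setminus X^*)_T \subset \bigcup_{j=1}^N \bigcup_{\substack{\boldsymbol\eta \in (\mathbb{G}_m^n)_{\mathrm{tors}} \\ \boldsymbol\eta G_j \subset X}} (\boldsymbol\eta G_j)_T.
\end{equation*}
Applying Lemma \ref{lem:ind} to each $G_j$---where the induction hypothesis supplies exactly the assumption the lemma requires, with $n - d_j \in \{2, \ldots, n-1\}$---yields $T^{n - 1/(n - d_j)}$; since $1 \le d_j$ we have $1/(n - d_j) \ge 1/(n-1) \ge 1/n$, so each contribution is $\ll_{X, G_j} T^{n - 1/n}$. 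Summing over the $N$ tori completes the induction.

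The hard part is justifying the finiteness of the underlying tori $G_1, \ldots, G_N$; this does not follow from Noetherianness of $X$ alone, since in positive characteristic infinitely many cosets of a fixed $G$ may lie in $X$. I would try to prove it using the bijection between algebraic subgroups of $\mathbb{G}_m^n$ and primitive sublattices of $\mathbb{Z}^n$ (Theorem \ref{thm:bij}): for a defining Laurent polynomial $P$ of $X$, the decomposition $P = \sum_\chi \mathbf{X}^{\mathbf{i}_\chi} P_\chi(\mathbf{X}^L)$ from Lemma \ref{lem:artin} (with $L$ a basis of $\Lambda_{G_j}$) forces $\Lambda_{G_j}$ to be compatible with the differences of exponents appearing in $\mathrm{Supp}(P)$, so that maximality of the coset allows only finitely many such lattices---equivalently, only finitely many $G_j$. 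The details of this finiteness argument are where I expect the main obstacle to lie.
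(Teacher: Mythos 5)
Your proposal is correct and follows essentially the same route as the paper: induction on $n$ with the base case $n=2$ handled by Lemma \ref{lem:D0}, the split into $X^*$ and the union of maximal positive-dimensional torsion cosets, and Lemma \ref{lem:ind} for each of the finitely many underlying tori. The finiteness you flag as the main obstacle is exactly the paper's Corollary \ref{cor:fin}, proved via the support-difference argument of Lemma \ref{lem:3214} that you sketch, so no new work is needed there.
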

\begin{proof}
Since any torsion coset in $X$ is contained in a maximal one, we find $$X\cap(\mathbb G_m^n)_{\mathrm{tors}}\subset X^*\cup \bigcup_{\substack{\boldsymbol\zeta G\subset X\text{ maximal torsion coset},\\ \dim(G)\ge 1}} \boldsymbol \zeta G.$$ Let $\mathcal G=\{G<\mathbb G_m^n: G \text{ linear torus, there is } \boldsymbol\zeta\in (\mathbb G_m^n)_{\mathrm{tors}}: \boldsymbol\zeta G \subset X\text{ is maximal and }\dim(G)\ge 1\}$ which is finite by Corollary \ref{cor:fin}. Then we have $$\#X_T\le \#X^*_T+\sum_{G\in\mathcal G}\# \left(\bigcup_{\substack{\boldsymbol\zeta\in(\mathbb{G}_m^n)_{\text{tors}}\\\boldsymbol\zeta G\subset X}} (\boldsymbol\zeta G)_T\right)$$ and since $\mathcal G$ is finite it is enough to show the bound for each summand separately. The proof is by induction on $n$.
 If $n=2$ we have $X_T= X^*_T$ and the theorem follows from Lemma \ref{lem:D0}. Let $n\ge 3$ and $G\in \mathcal G$ be of dimension $d$. We have $1\le d\le n-2$ by admissibility. Thus the hypothesis of Lemma \ref{lem:ind} is satisfied by induction. We can apply it and find $\# \left(\bigcup_{\substack{\boldsymbol\eta\in(\mathbb{G}_m^n)_{\text{tors}}\\\boldsymbol\eta G\subset X}} (\boldsymbol\eta G)_T\right)\ll_{X,G} T^{n-1/(n-d)}$. Thus we have $\#X_T\ll_X T^{n-1/(n-d)}$.\end{proof}

\section{Generalisation}\label{sec:gen}

We deduce the general theorem from the case where the subvariety is a hypersurface using algebraic geometry.

\begin{lemma}\label{lem:XYZ}Let $n\ge 2$, $X\subset \mathbb{G}_m^n$ be an algebraic set of dimension $d$ and $\pi:X\to\mathbb{G}_m^{n-1}$ the projection to the first $n-1$ coordinates. Let $Z=\{\mathbf p\in\mathbb G_m^{n-1}:\dim (\pi^{-1}(\mathbf p))\ge 1\}$. Then there exists $D>0$ such that for all $\mathbf{p}\in\pi(X)$ we have $\#\pi^{-1}(\mathbf p)\le D$ or $\mathbf p\in Z$. The set $Z$ is an algebraic set of dimension at most $d-1$. The Zariski closure $Y=\overline{\pi(X)}$ of the image is of dimension at most $d$.\end{lemma}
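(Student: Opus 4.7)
The plan is to choose Laurent polynomials $P_1,\dots ,P_s\in K[X_1^{\pm 1},\dots ,X_n^{\pm 1}]$ with $X=\bigcap_j Z(P_j)$ and then to expand each $P_j$ as a Laurent polynomial in the last variable, $P_j=\sum_{i=a_j}^{b_j} Q_{j,i}(X_1,\dots ,X_{n-1}) X_n^i$ with $Q_{j,a_j},Q_{j,b_j}$ both nonzero. All three assertions should fall out by inspecting fibers of $\pi$ one $\mathbf p\in\mathbb G_m^{n-1}$ at a time.

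I would first analyse $Z$ itself. For any $\mathbf p$ the fiber $\pi^{-1}(\mathbf p)$ is a closed subset of the one-dimensional variety $\{\mathbf p\}\times\mathbb G_m$, so $\dim\pi^{-1}(\mathbf p)\ge 1$ is equivalent to $\pi^{-1}(\mathbf p)=\{\mathbf p\}\times\mathbb G_m$, which is in turn equivalent to the simultaneous vanishing of every coefficient $Q_{j,i}$ at $\mathbf p$. Hence $Z$ is the common zero set of the $Q_{j,i}$, in particular algebraic in $\mathbb G_m^{n-1}$, and $Z\times\mathbb G_m\subseteq X$. Since dimension is additive on products and monotone under inclusion of algebraic sets, $\dim Z+1=\dim(Z\times\mathbb G_m)\le \dim X=d$, giving $\dim Z\le d-1$.

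Next I would settle (1) with $D=\max_j(b_j-a_j)$. For $\mathbf p\in\pi(X)\setminus Z$ the preceding paragraph provides some $(j,i)$ with $Q_{j,i}(\mathbf p)\neq 0$, so $P_j(\mathbf p,X_n)\in K[X_n^{\pm 1}]$ is a nonzero Laurent polynomial whose Laurent degree is at most $b_j-a_j\le D$. Since $\pi^{-1}(\mathbf p)$ injects via the last coordinate into the finite set of roots of $P_j(\mathbf p,X_n)$ in $\mathbb G_m$, we obtain $\#\pi^{-1}(\mathbf p)\le D$.

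Finally, for $\dim Y\le d$ I would decompose $X=X_1\cup\dots\cup X_k$ into irreducible components, so that $Y=\bigcup_i\overline{\pi(X_i)}$. For each $i$, the restriction $\pi|_{X_i}\colon X_i\to\overline{\pi(X_i)}$ is a dominant morphism of irreducible varieties, and the fiber dimension theorem gives $\dim X_i=\dim\overline{\pi(X_i)}+e_i\ge\dim\overline{\pi(X_i)}$, where $e_i$ is the generic fiber dimension; maximising over $i$ yields $\dim Y\le d$. I do not foresee any serious obstacle. The only point that deserves care is the opening observation that a proper closed subset of $\mathbb G_m$ is finite, which is what forces the dichotomy "$\pi^{-1}(\mathbf p)$ is finite or equals $\{\mathbf p\}\times\mathbb G_m$" underpinning both the algebraicity of $Z$ and the bound $D$.
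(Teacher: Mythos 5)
Your proof is correct, and it takes a genuinely more elementary route than the paper's. The paper obtains $D$ from the general fact that the number of irreducible components of the fibres of a morphism is uniformly bounded, and it gets closedness of $Z$ by citing upper semicontinuity of fibre dimension (Theorem 14.112 in G\"ortz--Wedhorn) applied to the set $V=\{\mathbf x\in X:\dim_{\mathbf x}\pi^{-1}(\pi(\mathbf x))\ge 1\}$, observing that $V=Z\times\mathbb G_m$. You instead exploit the fact that every fibre sits inside the line $\{\mathbf p\}\times\mathbb G_m$, so the whole lemma reduces to one-variable Laurent polynomials: $Z$ is cut out explicitly by the coefficient polynomials $Q_{j,i}$, and $D$ is an explicit degree bound $\max_j(b_j-a_j)$ (take $D=1$ if this maximum is $0$, i.e.\ all $P_j$ are monomials, in which case $X=\emptyset$ and the claim is vacuous). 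What the paper's approach buys is uniformity: the same two citations handle projections onto several coordinates, which is exactly what is needed later in the proof of Theorem \ref{thm:sineStab}, where the fibres are no longer contained in a line and your coefficient-expansion trick would not directly apply. What your approach buys is self-containedness and an explicit constant; the dichotomy ``a fibre is finite or is all of $\{\mathbf p\}\times\mathbb G_m$'' makes both the algebraicity of $Z$ and the bound on finite fibres immediate, and your remaining steps ($\dim Z\le d-1$ via $Z\times\mathbb G_m\subset X$, and $\dim\overline{\pi(X)}\le d$ via the fibre dimension theorem on irreducible components) match the paper's.
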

\begin{proof}Let $\mathbf p\in \pi(X)$. The number of irreducible components in the fibres is uniformly bounded and hence there exists $D>0$ such that all finite fibres, contain at most $D$ points. Note that the fibres of positive dimension are of the form $\mathbf q\times \mathbb G_m$ and therefore $V=\{\mathbf x\in X: \dim_{\mathbf x} \pi^{-1}(\pi(\mathbf x))\ge 1\}=A\times \mathbb G_m$ for some $A\subset \mathbb G_m^{n-1}$. By Theorem 14.112 in \cite{GW} we have that $V$ and hence also $A$ is closed. Thus $Z=\pi(V)=A$ is closed. Since $Z\times \mathbb G_m\subset X$ we have $\dim(Z)\le d-1$. That the dimension of the closure of a morphism is at most the dimension of the domain is well known.\end{proof}

\begin{proof}[Proof of Proposition \ref{thm:gen}] We prove $\#X_T\le \sum_{1\le m\le T}\#\{\mathbf x\in X: \mathbf x^m=\mathbf 1\}\ll_X T^{d+1}$. The first inequality follows immediately. It suffices to show the second one.\\Let $X_1,\dots, X_r$ be the irreducible components of $X$. Then $\max_{1\le i\le r}\dim(X_i)= d$ and $X_T=\bigcup_{i=1}^r (X_i)_T$ for all $T\ge 1$. Therefore we can assume that $X$ is irreducible.\\
If $d=0$ then $X$ is finite and the claim is immediate. If $n=1$ and $d=1$, then $X=\mathbb G_m$ and the sum is bounded by $\sum_{m=1}^T m\le T^2$. So we may assume $d\ge 1$ and $n\ge 2$\\
We continue by induction on $n+d$, the case $n+d=1$ is already done.\\
Let $\pi:\mathbb G_m^n\to \mathbb G_m^{n-1}$ denote the projection to the first $n-1$ coordinates. The set $$Z=\{\mathbf p\in\mathbb G_m^{n-1}:\dim (\pi|_X^{-1}(\mathbf p))\ge 1\}$$ is Zariski closed and of dimension at most $d-1$ by Lemma \ref{lem:XYZ}.
Let $T\in\mathbb N$ and observe that the sum in question is at most $a_T + b_T$, where $$a_T=\sum_{m=1}^T\#\{(\mathbf p,t)\in X: \mathbf p\in Z,\mathbf p^m=\mathbf 1, t^m=1\}\text{ and }b_T=\sum_{m=1}^T\#\{(\mathbf p,t)\in X: \mathbf p\not\in Z,\mathbf p^m=\mathbf 1, t^m=1\}.$$
Then $$a_T\le \sum_{m=1}^T\# \{\mathbf p\in Z: \mathbf p^m=\mathbf 1\} m\le T\sum_{m=1}^T\# \{\mathbf p\in Z: \mathbf p^m=\mathbf 1\}.$$ By induction applied to $Z\subset \mathbb G_m^{n-1}$ we have $a_T\ll_Z T^{\dim(Z) +1}\le T^{d+1}$.\\
If $\mathbf p\in\mathbb G_m^{n-1}\setminus Z$ there are at most $D$ possibilities for $t\in\mathbb G_m$ such that $(\mathbf p,t)\in X$ by Lemma \ref{lem:XYZ}. We apply the current lemma by induction to the Zariski closure $\overline{\pi(X)}\subset \mathbb G_m^{n-1}$ which has dimension at most $d$. So $b_T\le D\sum_{m=1}^T \#\{\mathbf p\in\overline{\pi(X)}: \mathbf p^m=\mathbf 1\}\ll_X T^{d+1}$.
\end{proof}

\begin{definition}Let $X$ be a topological space. Then we denote by $\mathrm{Irr}(X)$ the set of irreducible components of $X$.\end{definition}

\begin{remark}\label{rk:hypSurf}If an algebraic subset $X\subset \mathbb G_m^n$ is irreducible and of dimension $n-1$, it is a hypersurface. This can be deduced from the same fact for subvarieties of $\mathbb A^n$, which is stated in Theorem 1.21 in \cite{Shaf}.\end{remark}

\begin{theorem}\label{thm:sineStab}Let $K$ be an algebraically closed field and $X\subset \mathbb G_m^n$ irreducible and admissible of dimension $d$. Then we have $\#X_T\ll T^{d+1-1/(d+1)}$ for all $T\ge 1$.\end{theorem}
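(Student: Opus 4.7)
The plan is to reduce the theorem to the hypersurface case, Theorem~\ref{thm:pol}, by induction on the ambient dimension $n$. The base $n = 1$ is trivial, and if $d = n - 1$ then Remark~\ref{rk:hypSurf} identifies $X$ with an irreducible admissible hypersurface and Theorem~\ref{thm:pol} gives $\#X_T \ll_X T^{n-1/n} = T^{d+1-1/(d+1)}$. So assume $n \ge 2$, $d \le n - 2$, and the theorem for smaller ambient dimensions. Let $\pi\colon \mathbb G_m^n \to \mathbb G_m^{n-1}$ be the projection to the first $n - 1$ coordinates, set $Y = \overline{\pi(X)}$, and let $Z \subset Y$ be the locus of positive-dimensional fibers of $\pi|_X$ as in Lemma~\ref{lem:XYZ}. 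Since a positive-dimensional closed subset of $\mathbb G_m$ is $\mathbb G_m$, we have $Z \times \mathbb G_m \subset X$, and by irreducibility $\dim Y \in \{d - 1, d\}$.

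Three cases arise. \emph{Case 1:} $\dim Y = d - 1$. Then $Y \times \mathbb G_m$ is irreducible of dimension $d$ inside irreducible $X$, so $X = Y \times \mathbb G_m$; admissibility of $X$ forces admissibility of $Y$ since a top-dimensional torsion coset $C \subset Y$ would give a top-dimensional torsion coset $C \times \mathbb G_m$ in $X$. Induction gives $\#Y_T \ll_Y T^{d-1/d}$, Lemma~\ref{lem:torsCoset} bounds each fiber by $T^2/\mathrm{ord}(y)$, and Lemma~\ref{lem:sum} then yields $\#X_T \ll_X T^{d+1-1/d}$, which is stronger than what we need (the edge case $d = 1$ is vacuous because admissibility forces $Y$ to contain no torsion points). \emph{Case 2:} $\dim Y = d$ and $Y$ is admissible. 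A similar dimension-irreducibility count shows $\dim Z \le d - 2$ (otherwise some component $Z_i$ of dimension $d-1$ would force $X = Z_i \times \mathbb G_m$, contradicting $\dim Y = d$). Induction gives $\#Y_T \ll_Y T^{d+1-1/(d+1)}$, the generic fiber of $\pi|_X$ has size at most $D$ by Lemma~\ref{lem:XYZ}, so the finite-fiber part contributes at most $D \cdot \#Y_T$; the contribution from $Z \times \mathbb G_m$ is controlled by Proposition~\ref{thm:gen} and Lemma~\ref{lem:sum} to be $\ll T^{\dim Z + 2} \le T^d$. \emph{Case 3:} $\dim Y = d$ and $Y$ is not admissible. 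Then $Y$, being irreducible of dimension $d$ and containing a top-dimensional torsion coset, must equal it, so $Y$ itself is a torsion coset, and $X$ lies inside the torsion coset $\pi^{-1}(Y) \subset \mathbb G_m^n$ of dimension $d + 1$. Smith normal form produces a monoidal transform $A \in \mathrm{GL}_n(\mathbb Z)$ that sends $\pi^{-1}(Y)$ to the standard form $\mathbb G_m^{d+1} \times \{\boldsymbol\eta\}$ for some torsion $\boldsymbol\eta \in \mathbb G_m^{n-d-1}$, so $X^A = \tilde X \times \{\boldsymbol\eta\}$ with $\tilde X \subset \mathbb G_m^{d+1}$ an irreducible hypersurface; admissibility of $X$ transfers to $\tilde X$ (a torsion coset of $\tilde X$ would lift to one of $X^A$ and hence of $X$). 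Since monoidal transforms preserve torsion orders and a torsion point of $X^A$ of order $\le T$ projects to a torsion point of $\tilde X$ of order $\le T$, Theorem~\ref{thm:pol} yields $\#X_T = \#(X^A)_T \le \#\tilde X_T \ll_X T^{d+1-1/(d+1)}$.

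The main obstacle is Case~3, where the reduction to a hypersurface in $\mathbb G_m^{d+1}$ must carefully use the structure theorem for algebraic subgroups of $\mathbb G_m^n$ together with the fact that monoidal transforms preserve torsion orders, so that the bound for $\tilde X$ genuinely transports back to $X$. The small-dimension edge cases (in particular $d = 1$ and $\dim Z \le 0$) are either vacuously trivial or strictly dominated by the target bound $T^{d+1-1/(d+1)}$, so the remaining bookkeeping is routine.
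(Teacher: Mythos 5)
Your proposal is correct, but it takes a genuinely different route from the paper. The paper makes a \emph{single} projection $\phi:\mathbb G_m^n\to\mathbb G_m^{d+1}$ onto $d+1$ coordinates chosen so that $x_1|_X,\dots,x_d|_X$ are algebraically independent and, together with $x_{d+1}|_X$, multiplicatively independent; the multiplicative independence is exactly what rules out your Case 3 (it forbids $\overline{\phi(X)}$ from being a torsion coset, since otherwise $X$ would lie in a $d$-dimensional algebraic subgroup and be a torsion coset itself), so the image closure is in one stroke an irreducible admissible hypersurface in $\mathbb G_m^{d+1}$, to which Theorem \ref{thm:pol} applies; the fibers are then handled exactly as in your Case 2 (a proper closed locus $Z$ of positive-dimensional fibers controlled by Proposition \ref{thm:gen}, and a uniform bound on the finite fibers). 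You instead iterate one-coordinate projections and replace the clever coordinate choice by a three-way case analysis, absorbing the torsion-coset-image case via Lemma \ref{lem:redTorsCoset} and a monoidal transform down to a hypersurface in $\mathbb G_m^{d+1}$. Both arguments rest on the same ingredients (Theorem \ref{thm:pol}, Proposition \ref{thm:gen}, Lemmas \ref{lem:torsCoset}, \ref{lem:sum}, \ref{lem:XYZ}); the paper's version is shorter and avoids induction on $n$ at the cost of the function-field/multiplicative-independence argument, while yours is more mechanical but needs the extra bookkeeping you correctly supply ($\dim Z\le d-2$ in Case 2, the $d=1$ degeneration in Case 1 where Lemma \ref{lem:sum} is inapplicable, and the transfer of admissibility and of orders through the monoidal transform in Case 3). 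One small point of care in your Case 1: the inclusion $Y\times\mathbb G_m\subset X$ should be justified via $Z\times\mathbb G_m\subset X$ from Lemma \ref{lem:XYZ} together with upper semicontinuity of fiber dimension (which forces $Z=\pi(X)$ there), but this is available and the conclusion $X=Y\times\mathbb G_m$ stands.
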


\begin{proof}As the conclusion of the theorem is invariant under permuting coordinates
we may freely do so. As $\mathbb G_m^n$ itself is not admissible we may assume $d\le n-1$. If $d=0$, the set $X$ is finite and hence we have $X_T\le \#X \ll_X 1$. So we can assume that $d\ge 1$.\\
Next we fix a good projection. To this end we consider the coordinate functions
$x_1, \dots , x_n$ restricted to $X$. They are denoted $x_1|_X , \dots , x_n|_X$ and members of the
function field of $X$. By dimension theory, the function field $K(x_1|_X , \dots , x_n|_X )$ has
transcendence degree $d$ over $K$. After permuting coordinates we may assume that
$x_1|_X ,\dots, x_d|_X$ are algebraically independent.

We claim that there exists $k\in\{d+1,\dots, n\}$ such that $x_1|_X ,\dots, x_d|_X, x_k|_X$ are multiplicatively independent. Let us assume the contrary and deduce a contradiction. For each such $k$ there exist $a_{k,1},\dots, a_{k,d}, a_{k,k}\in\mathbb{Z}$ not all zero such that \begin{equation}\label{eq:mult}x_1|_X^{a_{k,1}}\cdots x_d|_X^{a_{k,d}}x_k|_X^{a_{k,k}}=1\end{equation} identically on $X$. By algebraic independence we must have $a_{k,k}\ne 0$. So the $n-d$ vectors $(a_{k,1},\dots,a_{k,d},0,\dots,0,a_{k,k}, 0,\dots, 0)\in\mathbb Z^n$ for $k\in\{d+1,\dots, n\}$ are linearly independent. They generate a subgroup $\Lambda<\mathbb Z^n$ of rank $n-d$. The algebraic subgroup $H=H_\Lambda<\mathbb G_m^n$ is of dimension $d$. By \eqref{eq:mult} we have $X\subset H$. But since $X$ is irreducible and of the same dimension as $H$, it is an irreducible component of $H$ and hence a torsion coset. This contradicts the admissibility of $X$.

After permuting coordinates we may assume $k = d + 1$.
Let $\phi : \mathbb G_m^n\to \mathbb G_m^{d+1}$ denote the projection onto the first $d + 1$ coordinates. Then
the Zariski closure $Y=\overline{\phi(X)}$ of $\phi(X)$ in $\mathbb G_m^{d+1}$ is irreducible and $\dim(Y)\le d$. For all $(\mathbf{x,x}')\in X$ we have $x_i|_X(\mathbf{x,x}') = x_i|_Y(\mathbf x)$. Thus $x_1|_Y,\dots, x_d|_Y$ must be algebraically inedependent elements of the function field of $Y$ and hence we have $\dim(Y)= d$.
Next we show that $Y$ is admissible. Assume that it is not. Then there exists a torsion coset $\boldsymbol\zeta G\subset Y$ of dimension $d$.  Since both $\boldsymbol\zeta G$ and $Y$ are irreducible we have $Y=\boldsymbol\zeta G$. Then there exists $\boldsymbol\lambda\in\mathbb Z^{d+1}\setminus\{0\}$ such that $\mathbf y^{\boldsymbol\lambda}=\boldsymbol{\zeta^\lambda}$ for all $\mathbf y\in Y$. Let $N$ be the order of $\boldsymbol\zeta$. Then since $(x_1,\dots,x_{d+1})\in Y$ for all $(x_1,\dots, x_n)\in X$ we find $x_1|_X^{N\lambda_1}\cdots x_{d+1}|_X^{N\lambda_{d+1}}=1$ identically on $X$, a contradiction. Hence $Y$ is admissible. By Remark \ref{rk:hypSurf} we have that $Y$ is a hypersurface. 

Let $$Z=\{\mathbf x\in X: \dim_{\mathbf x}\left(\phi|_X^{-1}(\phi(\mathbf x))\right)\ge 1\}.$$
It is an algebraic subset of $\mathbb G_m^n$ by Theorem 14.112 in \cite{GW}. The set $V=\{\mathbf y\in \mathbb G_m^{d+1}: \dim\left(\phi|_X^{-1}(\mathbf y)\right)\ge 1\}$ is algebraic and so is $Z=\phi|_X^{-1}(V)$. By Corollary 14.116 in \cite{BG} there is a nonempty open $U$ of $Y$ such that $\dim(\phi|_X^{-1}(\mathbf u))=0$ for all $\mathbf u\in U$. Since $\phi(X)$ is dense in $Y$, its intersection with $U$ is nonempty and therefore $Z$ is a proper subset of $X$. In particular we find $\dim(Z)\le d-1$. 

Let $T\ge 1$. Then $X_T\subset Z_T\cup B_T$ where \begin{align*}B_T&=\{\mathbf x\in X: \dim_{\mathbf x}(\phi|_X^{-1}(\phi(\mathbf x)))=0, \mathrm{ord}(\phi(\mathbf x))\le T\}.\end{align*} Applying Proposition \ref{thm:gen} we find $\#Z_T\ll_Z T^{\dim(Z)+1}\le T^d$ for all $T\ge 1$. Let $C$ be a uniform bound for the number of irreducible components in a fibre. For any $\mathbf x\in B_T$ we have $\mathbf y=\phi(\mathbf x)\in Y_T$ and $\{x\}$ is an irreducible component of $\phi|_X^{-1}(\mathbf y)$. Thus we find $$\#B_T\le\sum_{\substack{\mathbf y\in Y_T \\ \dim(\phi|_X^{-1}(\mathbf y))=0}} \#\phi|_X^{-1}(\mathbf y)\le \sum_{\mathbf y\in Y_T } \#\mathrm{Irr}(\phi|_X^{-1}(\mathbf y))\le C\#Y_T.$$ 
Since $Y$ is an admissible hypersurface in $\mathbb G_m^{d+1}$ we can apply Theorem \ref{thm:pol} to bound $\#Y_T\ll_X T^{d+1-1/(d+1)}$ for all $T\ge 1$. The theorem follows from $\#X_T\le \#Z_T+\#B_T$.\end{proof}

\begin{lemma}\label{lem:dimPreIm}Let $\phi: G\to G'$ be a surjective morphism of algebraic groups. Let $H'<G'$ be a subgroup and $g'\in G'$.
Then $\dim(\phi^{-1}(H'))=\dim(\ker \phi)+\dim (H')$.\end{lemma}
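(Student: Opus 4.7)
The plan is to restrict $\phi$ to the preimage $H := \phi^{-1}(H')$ and apply the fibre dimension theorem. First I would verify that $H$ is a closed algebraic subgroup of $G$ (closed as the preimage of the closed subgroup $H'$, a subgroup because $\phi$ is a group homomorphism), and that the restriction $\psi := \phi|_H \colon H \to H'$ is a surjective morphism of algebraic groups with kernel $K := \ker \phi$. Surjectivity holds because any $h' \in H'$ lifts under $\phi$ to some $g \in G$, which automatically satisfies $g \in \phi^{-1}(H') = H$.

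The key observation is that every non-empty fibre of $\psi$ is a coset of $K$, and hence has pure dimension $\dim K$. If $H$ and $H'$ were irreducible, the fibre dimension theorem would immediately yield $\dim H = \dim H' + \dim K$, and the proof would be finished.

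The main obstacle is handling potential non-irreducibility of $H$ and $H'$. Both are algebraic groups, so each decomposes into finitely many irreducible components, all translates of the identity component and hence of common dimensions $\dim H$ and $\dim H'$ respectively. I would pass to identity components and verify that $\psi|_{H^0} \colon H^0 \to H'^0$ is dominant: the image $\psi(H^0)$ is irreducible and contains the identity, hence lies in $H'^0$, while covering $H'$ by the finitely many translates $\psi(g_i) \psi(H^0)$, where the $g_i$ are coset representatives of $H^0$ in $H$, forces $\dim \overline{\psi(H^0)} \ge \dim H'^0$, so $\overline{\psi(H^0)} = H'^0$ by irreducibility. Since $K^0 \subseteq H^0$, every fibre of $\psi|_{H^0}$ is a translate of $K \cap H^0$, which has dimension $\dim K$. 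The classical fibre dimension theorem applied to this dominant morphism of irreducible varieties then yields $\dim H^0 = \dim H'^0 + \dim K$, which equals $\dim H = \dim H' + \dim K$, as required.
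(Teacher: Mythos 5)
Your proof is correct and follows the same route as the paper: restrict $\phi$ to $H=\phi^{-1}(H')$, observe that the restriction is a surjective homomorphism onto $H'$ with kernel $\ker\phi$, and apply the dimension formula $\dim H=\dim(\ker\psi)+\dim H'$. The only difference is that the paper cites this formula as a known fact about surjective homomorphisms of algebraic groups, whereas you derive it from the fibre dimension theorem, correctly handling the reduction to identity components.
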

\begin{proof}Let $H=\phi^{-1}(H')$ and $\psi:H\to H'$ which maps $g$ to $\phi(g)$. Since $\phi$ is surjective, $\psi$ is surjective too. We have $\dim H=\dim(\ker(\psi))+\dim H'$ and since $\ker(\phi)=\ker(\psi)$ the lemma follows.
\end{proof}

\begin{proof}[Proof of Theorem \ref{thm:main}] If $d=0$, $X$ and $G=\mathrm{Stab}(X)$ are both finite. The inequality is satisfied, since the exponent equals $0$. Thus we can assume that $d\ge 1$. If $\delta=d$ let $\mathbf x\in X$ and $H$ be the connected component of the identity of $G$. The coset $\mathbf xH\subset X$ is irreducible and of dimension $d$. Thus we find $X=\mathbf xH$. Since $X$ is admissible $\mathbf x$ is not a torsion point and hence $\#X_T=0$. Thus we can assume that $\delta<d\le n-1$. 

Let $L\in\mathrm{Mat}_{n\times (n-\delta)}(\mathbb Z)$ be a basis of $\Lambda_G$. Let $\phi:\mathbb G_m^n\to \mathbb G_m^{n-\delta}$ given by $\mathbf x\mapsto \mathbf x^L$. Note that $\phi$ is surjective since $\dim (\mathrm{Im}(\phi))=n-\dim(G)=n-\delta$. Let $Y\subset \mathbb G_m^{n-\delta}$ be the Zariski closure of $\phi(X)$. Then $Y$ is irreducible and hence either admissible or a torsion coset. Since the fibres of $\phi$ are cosets of $G$ we have $\dim(Y)=d-\delta$. Assume by contradiction that $Y$ is a torsion coset. Then there exists a torsion point $\boldsymbol\zeta\in \mathbb G_m^{n-\delta}$ of order $N$ and a linear torus $H<\mathbb G_m^{n-\delta}$ such that $Y=\boldsymbol\zeta H$. Then $H'=\bigcup_{\boldsymbol\zeta^N=1}\boldsymbol\zeta H$ is an algebraic subgroup of $\mathbb G_m^{n-\delta}$ of dimension $d-\delta$. By Lemma \ref{lem:dimPreIm} we have $\dim(\phi^{-1}(H'))=\delta+(d-\delta)=d$. Since $\phi(X)\subset Y=\boldsymbol\zeta H \subset H'$ and $d=\dim X$ we see that $X$ is an irreducible component of $\phi^{-1}(H')$ which are torsion cosets by Lemma \ref{lem:HLambda}. Since $d\ge 1$ this contradicts admissibility. Thus $Y$ is admissible.

Let $a_k$ be the number of cosets of $G$ of order $k$ contained in $X$. We have $$X_T=\bigcup_{\substack{\boldsymbol\zeta G \subset X\text{ torsion coset}\\\mathrm{ord}(\boldsymbol\zeta G)\le T}}(\boldsymbol\zeta G)_T \text{ and hence }\#X_T\ll_G T^{\delta+1}\sum_{k=1}^T a_k/k\text{ for all } T\ge 1$$ using Lemma \ref{lem:torsCoset}. If $\mathbf xG\subset X$ is a coset of order $k$, it is the preimage of $\mathbf x^L$ which is of order at most $k$. Therefore $\phi$ induces an injection $\mathbf xG\mapsto \phi(\mathbf x)$ from the cosets of $G$ which are contained in $X$ and of order bounded by $T$ into $Y_T$. Thus we find $\sum_{k=1}^T a_k\le \#Y_T \ll_X T^{d-\delta+1-1/(d-\delta+1)}$ for all $T\ge 1$ by Theorem \ref{thm:sineStab}. Since $d-\delta\ge 1$ the exponent is strictly larger than $1$ and using Lemma \ref{lem:sum} we find $$\#X_T\ll_G T^{\delta+1}\sum_{k=1}^T a_k/k\ll_X T^{(\delta+1)+(d-\delta+1-1/(d-\delta+1))-1}=T^{d+1-1/(d-\delta+1)}.\qedhere$$
\end{proof}

\begin{corollary}\label{cor:main}Let $K$ be an algebraically closed field and $X\subset\mathbb G_m^n$ admissible of dimension $d$. Let $\delta=\min_{Y\in \mathrm{Irr}(X)} \dim(\mathrm{Stab}(Y))$. Then $\#X_T\ll T^{1+d-1/(d-\delta+1)}$.\end{corollary}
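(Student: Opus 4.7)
The plan is to reduce to the irreducible admissible case already handled by Theorem \ref{thm:main}. I would decompose $X$ into its finitely many irreducible components $Y_1,\dots,Y_r$, so that $\#X_T\le\sum_{i=1}^r \#(Y_i)_T$, and bound each summand separately. Write $d_i=\dim(Y_i)$ and $\delta_i=\dim(\mathrm{Stab}(Y_i))$; then $d=\max_i d_i$ and $\delta=\min_i\delta_i$. A key observation is that the target exponent $1+d-1/(d-\delta+1)$ is monotone decreasing in $\delta$, so replacing a larger $\delta_i$ by the (possibly smaller) $\delta$ can only enlarge the exponent, which is the direction I need.

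For a component $Y_i$ with $d_i<d$, I would apply Proposition \ref{thm:gen} to obtain $\#(Y_i)_T\ll_{Y_i} T^{d_i+1}\le T^d$. Since each $\delta_j\le\dim(Y_j)\le d$, we have $\delta\le d$, hence $d-\delta+1\ge 1$ and $1+d-1/(d-\delta+1)\ge d$, so this contribution is absorbed by the target bound. (The case $d=0$ is immediate because $X$ is then finite and $\delta=0$, giving target exponent $0$.)

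For a top-dimensional component, i.e.\ $d_i=d\ge 1$, I would first check that $Y_i$ itself is admissible. If $Y_i$ contained a torsion coset $C$ of dimension $d$, then $C\subset Y_i\subset X$ would be a torsion coset of dimension $\dim X$, contradicting the admissibility of $X$. Hence $Y_i$ is admissible, and Theorem \ref{thm:main} yields $\#(Y_i)_T\ll_{Y_i} T^{d+1-1/(d-\delta_i+1)}$. From $\delta\le\delta_i$ we get $d-\delta_i+1\le d-\delta+1$, whence $T^{d+1-1/(d-\delta_i+1)}\le T^{d+1-1/(d-\delta+1)}$. Summing the $r$ bounds, each with its own implicit constant depending on $Y_i$, gives $\#X_T\ll_X T^{1+d-1/(d-\delta+1)}$.

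The main subtle point is conceptually minor but needs attention: $\delta$ is the minimum over \emph{all} irreducible components, not only those of top dimension. A component $Y_j$ of dimension $d_j<d$ with very small stabilizer can drive $\delta$ down and thus enlarge the allowed exponent, but since that low-dimensional component only contributes $O(T^d)$, this causes no problem. Everything else is bookkeeping with exponents; no further geometry is required beyond the admissibility transfer from $X$ to its top-dimensional components.
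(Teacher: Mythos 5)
Your proof is correct and follows the same route as the paper: decompose $X$ into its irreducible components and apply Theorem \ref{thm:main} to each. You are in fact slightly more careful than the paper's one-line proof, which applies Theorem \ref{thm:main} to every component indiscriminately: a component of dimension $d_i<d$ need not be admissible (it could itself be a positive-dimensional torsion coset without violating the admissibility of $X$), and your use of Proposition \ref{thm:gen} for such components, whose contribution $T^{d_i+1}\le T^{d}$ is absorbed by the target exponent because $\delta\le d$, correctly covers this case.
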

\begin{proof}Note that $\#X_T\le \sum_{Y\in\mathrm{Irr}(X)} \#Y_T$ and apply Theorem \ref{thm:main} to each summand.\end{proof}
\begin{remark}The number $\delta$ in Corollary \ref{cor:main} is also equal to $$\min_{\mathbf x\in X}\max\{\dim(G): G\text{ linear torus, } \mathbf xG\subset X\}.$$\end{remark}

\section{Characteristic 0}\label{sec:char0}

In characteristic zero we can say much more than in general: The Manin Mumford Conjecture allows us to reduce the general case to the case of torsion cosets. Thus we can give an asymptotic formula for any subvariety. In this section $K$ denotes a field of characteristic $0$.

The following theorem is sometimes called the Manin-Mumford- conjecture. In the current setting it was proved by Laurent.

\begin{theorem}\label{thm:MM}Let $X\subset \mathbb{G}_m^n$ be an algebraic set. Then there are finitely many torsion cosets $C_1,\dots, C_m$ such that $X\cap(\mathbb{G}_m^n)_{\mathrm{tors}} = \bigcup_{i=1}^m (C_i\cap (\mathbb{G}_m^n)_{\mathrm{tors}})$.\end{theorem}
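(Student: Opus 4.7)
The plan is to combine a Galois-theoretic Frobenius argument with an Ihara--Serre--Tate type rigidity lemma. By the Lefschetz principle one may assume $K=\overline{\mathbb{Q}}$. First I would set $Z=\overline{X\cap(\mathbb{G}_m^n)_{\mathrm{tors}}}$ and decompose it into irreducible components. Using that torsion points are Zariski dense in every torsion coset (they are dense in $\mathbb{G}_m^d$, and a torsion coset is a translate of such a subgroup), one sees that the theorem is equivalent to the following: if $X$ is irreducible and $X\cap(\mathbb{G}_m^n)_{\mathrm{tors}}$ is Zariski dense in $X$, then $X$ is itself a torsion coset. Replacing $X$ by an irreducible component of $Z$, this is the statement I would aim to establish.

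Next I would fix a number field $K_0\subset\overline{\mathbb{Q}}$ over which $X$ is defined. For any torsion point $\boldsymbol{\zeta}\in X$ of order $N$, the group $\mathrm{Gal}(\overline{\mathbb{Q}}/K_0)$ acts on $\boldsymbol{\zeta}$ through a subgroup of $(\mathbb{Z}/N\mathbb{Z})^*$ by $\boldsymbol{\zeta}\mapsto\boldsymbol{\zeta}^a$. Choosing two primes $p_1,p_2$ that split completely in $K_0$, the Frobenius at a place of $K_0$ above $p_i$ acts on $\mu_N$ with $\gcd(N,p_i)=1$ by $\zeta\mapsto\zeta^{p_i}$, so $\boldsymbol{\zeta}^{p_i}\in X$ for every such $\boldsymbol{\zeta}$. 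After arguing that the prime-to-$p_1p_2$ torsion of $X$ is still Zariski dense in $X$, the morphism $[p_i]:\mathbb{G}_m^n\to\mathbb{G}_m^n$ sends a dense subset of $X$ into $X$; by closedness and finiteness of $[p_i]$, together with $\dim([p_i](X))=\dim X$ and irreducibility of $X$, one concludes $[p_i](X)=X$ for $i=1,2$.

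At this point I would invoke the Ihara--Serre--Tate rigidity lemma: an irreducible subvariety of $\mathbb{G}_m^n$ containing a torsion point and stabilised by $[p_1]$ and $[p_2]$ for two multiplicatively independent integers is a torsion coset. Its proof analyses the pullback $[p_i]^*$ on the coordinate ring $K[X_1^{\pm1},\dots,X_n^{\pm1}]/I(X)$, or equivalently on the character lattice of the ambient torus: compatibility with two non-commensurable scalings forces $I(X)$ to be generated by binomials $\mathbf{X}^{\mathbf{a}}-\zeta$ with $\zeta$ a root of unity, whose vanishing locus in $\mathbb{G}_m^n$ is precisely a torsion coset. The main obstacle is exactly this rigidity lemma, which is the algebraic heart of Manin--Mumford for $\mathbb{G}_m^n$; a secondary but nontrivial point is the Zariski density of prime-to-$p_1p_2$ torsion in $X$ used above, which must be verified carefully to rule out pathological concentration of torsion orders at $p_1$ or $p_2$.
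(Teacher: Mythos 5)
The paper does not actually prove this statement: it is Laurent's theorem (Manin--Mumford for $\mathbb G_m^n$) and is simply quoted from \cite{ML}, so the comparison is between your sketch and the known proofs rather than anything in the paper. Your reduction to the case of an irreducible $X$ with Zariski-dense torsion is correct, as is the descent to $\overline{\mathbb Q}$ and the basic Frobenius observation that $\boldsymbol\zeta^{p}\in X$ whenever $\boldsymbol\zeta\in X$ is torsion of order prime to $p$ and $p$ splits in a field of definition of $X$. This is indeed the Ihara--Serre--Tate/Lang circle of ideas.

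There are, however, two genuine gaps. First, the claim that the prime-to-$p_1p_2$ torsion is Zariski dense in $X$ is not merely delicate but false in general: if $X=\boldsymbol\eta H$ is itself a torsion coset with $p_1\mid \mathrm{ord}(\boldsymbol\eta H)$, then \emph{every} torsion point of $X$ has order divisible by $p_1$, and moreover $[p_1](X)=\boldsymbol\eta^{p_1}H\neq \boldsymbol\eta H=X$. So the intermediate conclusion $[p_i](X)=X$ fails precisely for some of the torsion cosets the theorem is supposed to produce, and the argument cannot be organised this way. The standard repair uses, for a torsion point of order $N=p^kM$ with $p\nmid M$, a Galois element raising $\boldsymbol\zeta$ to a power $a$ with $a\equiv 1\pmod{p^k}$ and $a\equiv p\pmod{M}$; the induced self-correspondence of $X$ is then only a twisted power map, and controlling it is where the real work lies. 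Second, the rigidity lemma you invoke carries the entire weight of the theorem and is only gestured at: the assertion that compatibility with two non-commensurable scalings forces $I(X)$ to be generated by binomials $\mathbf X^{\mathbf a}-\zeta$ is not a proof, and in dimension greater than one this step is hard enough that the standard references obtain the general case by entirely different means (Laurent via Baker's method, Zhang and Bombieri--Zannier via heights and equidistribution, Hrushovski via model theory); the elementary Frobenius argument is carried out completely in the literature essentially only for curves. As it stands the proposal is a plausible plan whose two hardest steps are unproved, and one of them is formulated in a way that is actually false.
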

\begin{proof}This is Théorème 2 in \cite{ML}.\end{proof}

\begin{proof}[Proof of Corollary \ref{cor:char0}]By Theorem \ref{thm:MM} there exist finitely many torsion cosets $C_1,\dots, C_m$ in $\mathbb{G}_m^n$ such that $X\cap(\mathbb{G}_m^n)_{\mathrm{tors}} = \bigcup_{i=1}^m (C_i\cap (\mathbb{G}_m^n)_{\mathrm{tors}})$. Intersecting both sides with $(\mathbb G_m^n)_T$ we get $X_T=\bigcup_{i=1}^m (C_i)_T$. If $d=0$, there are at most finitely many torsion points in $X$ and thus we have $\#X_T\ll_X 1$ in this case. If $d\ge 1$ the corollary follows directly from Theorem \ref{thm:countCoset} since the intersection of the torsion cosets is a finite union of torsion cosets of strictly lower dimension.\end{proof}

\section{Lower bounds in $\overline{\mathbb F_p}$}\label{sec:charp}

We use the Lang-Weil estimates to establish lower bounds if $K=\overline{\mathbb F}_p$. 

\begin{theorem}\label{thm:LW}Let $X\subset \mathbb{P}^n$ be a geometrically irreducible variety defined over a finite field $K$ of positive characteristic with $q$ elements. Assume that $X$ is of dimension $r$. Then there exists $C=C(X)$ such that $|\#X(K)-q^r|\le Cq^{r-(1/2)}$.\end{theorem}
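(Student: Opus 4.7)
The plan is to prove this by induction on the dimension $r = \dim X$, with the base case $r=1$ being the Hasse--Weil bound for curves. In the base case, if $X$ is a geometrically irreducible projective curve of degree $d$, I would pass to the normalization $\tilde X$, which is smooth of some genus $g$ bounded in terms of $d$. The Weil conjectures for curves (equivalently, Riemann--Roch together with positivity) give $|\#\tilde X(K) - (q+1)| \le 2g\sqrt{q}$, and since normalization is birational, $\#X(K)$ differs from $\#\tilde X(K)$ by $O(1)$, yielding the desired bound with $C$ depending on $d$.

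For the inductive step with $r \ge 2$, the key idea is a hyperplane-section count. Consider the incidence variety $I = \{(P,H) : P \in X \cap H\} \subset X \times \check{\mathbb P}^n$. Projecting to $X$, each $P \in X(K)$ lies on exactly $\#\mathbb P^{n-1}(K) = (q^n-1)/(q-1)$ hyperplanes defined over $K$; projecting to $\check{\mathbb P}^n$ yields
\[
\#X(K) \cdot \frac{q^n-1}{q-1} \;=\; \sum_{H \in \check{\mathbb P}^n(K)} \#(X \cap H)(K).
\]
Then I would split the sum according to whether $H$ is ``good'' ($X\cap H$ geometrically irreducible of dimension $r-1$) or ``bad.'' For good $H$, induction gives $\#(X\cap H)(K) = q^{r-1} + O(q^{r-3/2})$; for bad $H$, a crude bound $\#(X\cap H)(K) \ll (\deg X) q^{r-1}$ is enough, provided the bad locus is small.

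The crucial ingredient is a Bertini-type theorem valid over arbitrary (including finite) fields: the set $B \subset \check{\mathbb P}^n$ of hyperplanes for which $X \cap H$ fails to be geometrically irreducible of dimension $r-1$ is a proper Zariski-closed subset, whose geometric dimension is at most $n-1$. By a trivial count, $\#B(K) \ll q^{n-1}$ with the implied constant depending on $X$. Combining the main term $(q^n + O(q^{n-1}))\cdot (q^{r-1} + O(q^{r-3/2}))$ with the error contribution from $B$, namely $O(q^{n-1} \cdot q^{r-1}) = O(q^{n+r-2})$, and dividing by $(q^n-1)/(q-1)$, the desired estimate $|\#X(K) - q^r| \le C q^{r-1/2}$ falls out.

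The main obstacle is the Bertini step. The classical Bertini theorem requires an infinite ground field, so a variant due to Jouanolou (using the characteristic-free form of the irreducibility theorem for general hyperplane sections of $X \otimes \bar K$, followed by a descent-of-scalars argument to identify the bad locus as definable over $K$) is needed. Making this effective (so that the constant $C$ depends only on the degree and embedding dimension of $X$) is the most delicate part; once it is in hand, the induction runs without further difficulty.
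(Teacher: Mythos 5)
The paper does not actually prove this statement: it is quoted from Lang--Weil and the ``proof'' is just the citation of Theorem 1 in \cite{LW}. Your sketch is essentially a reconstruction of the original Lang--Weil argument (induction on dimension via linear sections, Bertini irreducibility, and the Weil bound for curves as the base case), so there is no competing argument in the paper to compare against; I will assess the sketch on its own terms.

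The outline is sound and the arithmetic closes: the good hyperplanes contribute $\bigl(q^{n}+O(q^{n-1})\bigr)\bigl(q^{r-1}+O(q^{r-3/2})\bigr)$, the bad ones $O(q^{n-1})\cdot O(q^{r-1})$, and dividing by $(q^{n}-1)/(q-1)=q^{n-1}\bigl(1+O(1/q)\bigr)$ gives $q^{r}+O(q^{r-1/2})$. Two points need more care than you give them. First, the induction does not close unless the statement you prove is the \emph{uniform} one, with $C$ depending only on $n$, $r$ and $\deg X$: in the sum over good $H$ you invoke the inductive hypothesis for about $q^{n}$ different varieties $X\cap H$, and you need a single constant valid for all of them. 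You locate all of the delicacy in the Bertini step, but uniformity is needed already to make the main term legitimate; fortunately $\deg(X\cap H)\le\deg X$ and the ambient dimension drops, so the uniform statement is the correct inductive hypothesis and should be stated as such from the outset. Second, the bad locus $B$ is a priori only constructible, not closed; what you need (and what Lang--Weil's own Theorem 2, or Jouanolou's form of Bertini, provides) is that $B$ is contained in a proper closed subset of $\check{\mathbb P}^{n}$ of degree bounded in terms of $\deg X$ and $n$, so that $\#B(K)\ll q^{n-1}$ with a uniform implied constant. You should also dispose of the degenerate possibility $X\subset H$ (replace $\mathbb P^{n}$ by the linear span of $X$ at the start), since for such $H$ the crude bound $\#(X\cap H)(K)\ll q^{r-1}$ fails. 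With these repairs your sketch is a correct, if laborious, proof; for the purposes of the paper the citation is of course the intended route.
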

\begin{proof}This is Theorem 1 in \cite{LW}.\end{proof}

\begin{proof}[Proof of Theorem \ref{thm:charp}]Considering an irreducible component of $X$ of dimension $d$ we can assume that $X$ is irreducible. Let $Z$ be the Zariski closure of $Y=\{(1:x_1:\dots:x_n)\in\mathbb P^n: (x_1,\dots, x_n)\in X\}$ in $\mathbb P^n$. Then $Z$ is irreducible and of dimension $d$. Since $Z$ is irreducible, the proper and closed subset $B=Z\cap\{(x_0:\dots:x_n)\in\mathbb P^n: x_0\cdots x_n=0\}$ is of dimension at most $d-1$ and we have $Z=Y\cup B$. 

Let $k\in\mathbb{N}$ be such that $Z$ is defined over $\mathbb F_{p^k}$. Let $l\in\mathbb{N}$ be a multiple of $k$. Then $\mathbb F_{p^k}$ is a subfield of $\mathbb F_{p^l}$ and hence $Z$ and $B$ are defined over $\mathbb F_{p^l}$ too. By Theorem \ref{thm:LW} there exist constants $C,D>0$ independent of $l$ such that $|\#Z(\mathbb F_{p^l})-p^{ld}|\le Cp^{l(d-1/2)}$ and $\#B(\mathbb F_{p^l})\le Dp^{l\dim(B)} \le Dp^{l(d-1)}$. Let $T\gg 1$ be large enough and $l$ a multiple of $k$ such that $p^{l}-1\le T \le p^{l+k}-1$. Note that for all $(x_0:\dots:x_n)\in Z(\mathbb F_{p^l})\setminus B(\mathbb F_{p^l})$ we have $(x_1/x_0,\dots, x_n/x_0)\in X_{p^l-1}$ since for all nonzero $x\in\mathbb F_{p^l}$ we find $x^{p^l-1}=1$. Therefore we get \begin{align*}\#X_T&\ge \#X_{p^l-1}\ge \#Z(\mathbb F_{p^l})-\#B(\mathbb F_{p^l})\\
&\ge p^{ld}-Cp^{l(d-1/2)}-D p^{l(d-1)}=p^{ld}(1-Cp^{-l/2}-Dp^{-l}).\end{align*}Thus if $T$ and hence $l$ is large enough, we have $\#X_T\ge \frac{p^{ld}}{2}\ge \frac{1}{2p^{kd}}(p^{k+l})^d\ge \frac{1}{2p^{kd}}T^d$.\end{proof}

\appendix
\section{Algebraic subgroups and subgroups of $\mathbb Z^n$}\label{sec:algSubg}

We verify that many results of chapter 3 in \cite{BG} remain valid over arbitrary algebraically closed fields. These facts may be well-known, but we are not aware of a reference. In this section $K$ denotes an algebraically closed field.

\begin{definition}For an algebraic subgroup $G$ we define $$\Lambda_G=\{\mathbf v\in\mathbb Z^n: \mathbf{x^v}=1 \text{ for all }\mathbf x\in G\}$$ and for a subgroup $\Lambda<\mathbb Z^n$ we set $$H_\Lambda = \{\mathbf x\in \mathbb G_m^n: \mathbf{x^v}=1 \text{ for all } \mathbf v\in\Lambda\}.$$\end{definition}
\begin{definition}Let $\Lambda<\mathbb Z^n$ be a subgroup. The saturation $\tilde \Lambda$ of $\Lambda$ is defined as $$\tilde\Lambda=\{\mathbf v\in \mathbb Z^n: \text{ there exists }k\in\mathbb{N}: k\mathbf v\in \Lambda\}.$$ We call $\Lambda$ primitive if $[\tilde\Lambda:\Lambda]=1$.\end{definition}

\begin{definition}Let $p$ be a prime or $0$ and $\Lambda < \mathbb Z^n$ a subgroup. We say that $\Lambda$ is $p$-full if $p=0$ or if $p$ is prime and for all $\mathbf v\in\mathbb Z^n$ we have that $p\mathbf v\in\Lambda$ implies that $\mathbf v\in\Lambda$.\end{definition}

\begin{lemma}\label{lem:HLambda}Let $p=\mathrm{char}(K)$ and $\Lambda$ be a $p$-full subgroup of $\mathbb Z^n$ of rank $n-r$. Then $H_\Lambda$ is an algebraic subgroup of $\mathbb{G}_m^n$ of dimension $r$, which is the union of $[\tilde \Lambda:\Lambda]$ torsion cosets of dimension $r$. We also have $\Lambda_{H_\Lambda}=\Lambda$.\end{lemma}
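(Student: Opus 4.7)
The plan is to reduce $H_\Lambda$, via a monoidal transform, to a product of finite groups of roots of unity and a $\mathbb G_m^r$-factor, and then to read off every assertion from this explicit description. First I would pick a basis of $\Lambda$ and form a matrix $V \in \mathrm{Mat}_{n \times (n-r)}(\mathbb Z)$ whose columns are that basis. A Smith normal form $V = SDT$ gives $D = \mathrm{diag}_{n \times (n-r)}(\alpha_1,\dots,\alpha_{n-r})$ with all $\alpha_i \ge 1$ (since $\Lambda$ has full rank $n-r$). Because $T$ is invertible over $\mathbb Z$, the defining condition $\mathbf x^V = \mathbf 1$ is equivalent to $\mathbf y^D = \mathbf 1$ with $\mathbf y = \mathbf x^S$, and the map $\mathbf y \mapsto \mathbf y^{S^{-1}}$ is an isomorphism of algebraic tori. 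Hence $H_\Lambda$ is isomorphic to $\mu_{\alpha_1} \times \cdots \times \mu_{\alpha_{n-r}} \times \mathbb G_m^r$, which immediately yields $\dim H_\Lambda = r$ and exhibits $H_\Lambda$ as a disjoint union of torsion cosets of dimension $r$ indexed by $\mu_{\alpha_1} \times \cdots \times \mu_{\alpha_{n-r}}$.

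Next I would interpret the $p$-full hypothesis in terms of the invariant factors, which I expect to be the main obstacle. Under $S^{-1}$, the lattice $\Lambda$ corresponds to $D\mathbb Z^{n-r}$ (embedded in $\mathbb Z^n$ via the first $n-r$ coordinates), and its saturation corresponds to $\mathbb Z^{n-r} \times \{0\}^r$, so $[\tilde\Lambda : \Lambda] = \alpha_1 \cdots \alpha_{n-r}$. A short computation shows that $\Lambda$ is $p$-full if and only if $p = 0$ or $p \nmid \alpha_i$ for every $i$: if $p \mid \alpha_i$ then $S((\alpha_i/p)\mathbf e_i)$ witnesses a failure of $p$-fullness, and conversely $\gcd(\alpha_i, p) = 1$ upgrades $\alpha_i \mid pw_i$ to $\alpha_i \mid w_i$. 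Combined with the standard fact $\#\mu_n = n$ whenever $p \nmid n$ (established inside the proof of Lemma \ref{lem:numberOfFixedOrder}), the $p$-full hypothesis then guarantees that the number of connected components of $H_\Lambda$ equals $\alpha_1 \cdots \alpha_{n-r} = [\tilde\Lambda : \Lambda]$.

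Finally I would verify $\Lambda_{H_\Lambda} = \Lambda$. The inclusion $\Lambda \subset \Lambda_{H_\Lambda}$ is tautological from the definition of $H_\Lambda$. For the converse, take $\mathbf v \in \Lambda_{H_\Lambda}$ and write $\mathbf w = S^{-1}\mathbf v$. The condition $\mathbf x^{\mathbf v} = 1$ on all of $H_\Lambda$ rewrites as $\mathbf y^{\mathbf w} = 1$ for all $\mathbf y$ in $\mu_{\alpha_1} \times \cdots \times \mu_{\alpha_{n-r}} \times \mathbb G_m^r$. Letting the last $r$ coordinates of $\mathbf y$ vary freely over $\mathbb G_m$ forces $w_{n-r+1} = \cdots = w_n = 0$, and testing against a generator of each cyclic group $\mu_{\alpha_i}$ (which has exact order $\alpha_i$ thanks to $p$-fullness) forces $\alpha_i \mid w_i$. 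Hence $\mathbf w \in D\mathbb Z^{n-r}$, so $\mathbf v = S\mathbf w \in SD\mathbb Z^{n-r} = \Lambda$, completing the proof.
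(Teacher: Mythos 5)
Your proposal is correct and follows essentially the same route as the paper: the paper invokes the elementary divisor theorem to choose a basis $\mathbf b_1,\dots,\mathbf b_n$ of $\mathbb Z^n$ adapted to $\Lambda$, which is exactly your Smith normal form reduction, and then reads off the component count, the index $[\tilde\Lambda:\Lambda]=\prod_i\alpha_i$, and the equality $\Lambda_{H_\Lambda}=\Lambda$ from the diagonal model by the same tests (free $\mathbb G_m$-coordinates and generators of $\mu_{\alpha_i}$, using $p$-fullness to ensure $\#\mu_{\alpha_i}=\alpha_i$). No gaps.
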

\begin{proof}By the theorem of elementary divisors (Theorem III.7.8 in \cite{Lang}) there exists a basis $\mathbf b_1,\dots,\mathbf b_n$ of $\mathbb Z^n$ and $\lambda_1,\dots, \lambda_{n-r}\in \mathbb{Z}\setminus \{0\}$ such that $\lambda_1\mathbf b_1,\dots, \lambda_{n-r}\mathbf b_{n-r}$ is a basis of $\Lambda$. Let $B=(\mathbf b_1,\dots,\mathbf b_n)\in\mathrm{GL}_n(\mathbb Z)$. Then the monoidal transform $\mathbf x\mapsto \mathbf x^B$ gives an isomorphism between $H_\Lambda$ and $F\times \mathbb G_m^r$ where $F=\{\mathbf x\in\mathbb G_m^{n-r}: x_1^{\lambda_1}=1,\dots, x_{n-r}^{\lambda_{n-r}}=1 \}$ since $\mathbf x^{\lambda_i\mathbf b_i}=1$ if and only if $\mathbf x^{\lambda_i B\mathbf e_i}=1$ and thus if and only if $\mathbf {y=x}^B$ satisfies $\mathbf y_i^{\lambda_i}=1$. Then we see that $H_\Lambda$ is a union of $\#F$ linear tori of dimension $r$.

We claim that $\mathrm{span}\{\mathbf b_1,\dots,\mathbf b_{n-r}\}=\tilde\Lambda$. Since $\lambda_i\mathbf b_i\in \Lambda$ the inclusion $\subset$ holds. For the other let $\mathbf v=\alpha_1\mathbf b_1+\dots+\alpha_n\mathbf b_n\in \tilde\Lambda$. There exists $m\in\mathbb{N}$ such that $m\mathbf v\in\Lambda.$ Hence we can write $m\mathbf v= \beta_1\lambda_1\mathbf b_1+\dots+\beta_{n-r}\lambda_{n-r}\mathbf b_{n-r}$ and the comparison of coefficients yields $\alpha_{n-r+1}=\dots=\alpha_n=0$. Hence we have $\mathbf v\in \mathrm{span}\{\mathbf b_1,\dots, \mathbf b_{n-r}\}$ and the claim is true.

Consider the surjective group homomorphism $\mathbb{Z}^{n-r}\to \tilde\Lambda$ given by $(\alpha_1,\dots,\alpha_{n-r})\mapsto \alpha_1\mathbf b_1+\dots +\alpha_{n-r}\mathbf b_{n-r}$ and compose it with the reduction $\tilde\Lambda\to\tilde\Lambda/\Lambda$. Let $D=\mathrm{diag}_{n-r,n-r}(\boldsymbol\lambda)$. Then the kernel of the composition is $\lambda_1\mathbb{Z}\times \dots \times \lambda_{n-r}\mathbb{Z}=D\mathbb{Z}^{n-r}$. Thus we find $[\tilde\Lambda:\Lambda]=[\mathbb Z^{n-r}:D\mathbb{Z}^{n-r}]=\prod_{i=1}^{n-r}|\lambda_i|$.
Since $\Lambda$ is $p$-full the $\lambda_i$ are coprime to $p$, if $p>0$ and therefore we have $\#\mu_{|\lambda_i|}=|\lambda_i|$ in all cases. Thus we find $\#F=\prod_{i=1}^{n-r}\#\mu_{|\lambda_i|}=\prod_{i=1}^{n-r}|\lambda_i|=[\tilde\Lambda:\Lambda]$.

Let $\Lambda'=\lambda_1\mathbb Z\times \dots \times \lambda_{n-r}\mathbb Z\times \{0\}\times \dots\times \{0\}<\mathbb Z^n$. In a first step we reduce the last statement to $\Lambda_{H_{\Lambda'}}=\Lambda'$. We have seen above that $\mathbf x\mapsto\mathbf x^B$ is an isomorphism between $H_\Lambda$ and $H_{\Lambda'}$. Therefore we have $$\Lambda_{H_\Lambda}=\{\mathbf v\in\mathbb Z^n: \mathbf{x^v}=1 \text{ for all }\mathbf x\in H_\Lambda\}=\{BB^{-1}\mathbf v\in\mathbb Z^n: \mathbf x^{B^{-1}\mathbf v}=1 \text{ for all }\mathbf x\in H_{\Lambda'}\}=B\Lambda_{H_{\Lambda'}}.$$ So if we assume that $\Lambda_{H_{\Lambda'}}=\Lambda'$ we find $\Lambda_{H_\Lambda}=B\Lambda'=\Lambda$.\\
In the second step we prove $\Lambda_{H_{\Lambda'}}=\Lambda'$ and let $\mathbf v\in\Lambda'$. Then we have $\mathbf{ x^v}=1$ for all $\mathbf x\in H_{\Lambda'}$ and hence $\mathbf v\in \Lambda_{H_{\Lambda'}}$.

For the other inclusion let $\mathbf v\in\Lambda_{H_{\Lambda'}}$. Let $i\in\{1,\dots, n-r\}$. Since $\lambda_i$ is coprime to $p$ if $p>0$ there always exists a root of unity $\xi_{\lambda_i}$ of order $|\lambda_i|$. We have $(1,\dots,1,\xi_{\lambda_i},1,\dots, 1)\in H_{\Lambda'}$ where $\xi_{\lambda_i}$ is the $i$-th coordinate. Hence $\xi_{\lambda_i}^{v_i}=1$ which shows that $v_i\in \lambda_i\mathbb Z$. Now let $i\in\{n-r+1,\dots, n\}$. Then for all $x\in\mathbb G_m$ we have $(1,\dots,1,x,1,\dots,1)\in H_{\Lambda'}$ where $x$ is the $i$-th coordinate and hence $x^{v_i}=1$. This implies that $v_i=0$, and thus $\mathbf v\in \Lambda'$ and the other inclusion holds too. 
\end{proof}

\begin{definition}A torus coset is a coset of a linear torus.\end{definition}

\begin{lemma}\label{lem:3214}Let $X\subset \mathbb G_m^n$, be an algebraic set defined by Laurent polynomials $P_i:=\sum_{\boldsymbol\lambda} a_{i,\boldsymbol\lambda}\mathbf{x}^{\boldsymbol\lambda}=0$ and let $L_i$ be the set of exponents appearing in the monomials in $P_i, i=1,\dots m$. Let $\mathbf zG$ be a maximal coset in $X$. Then $G=H_{\Lambda}$, where $\Lambda$ is a subgroup generated by vectors of type $\boldsymbol\lambda'_i-\boldsymbol\lambda_i$ with $\boldsymbol\lambda'_i,\boldsymbol\lambda_i\in L_i,$ for $i=1,\dots, m$.\\
Similarly suppose that a linear torus $G$ has a torus coset $zG\subset X$ which is maximal among torus cosets contained in $X$. Then $G=H_{\tilde\Lambda}$, where $\Lambda$ is a subgroup generated by vectors of type $\boldsymbol\lambda'_i-\boldsymbol\lambda_i$ with $\boldsymbol\lambda'_i,\boldsymbol\lambda_i\in L_i,$ for $i=1,\dots, m$.\end{lemma}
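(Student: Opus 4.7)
The plan is to refine the character decomposition of Lemma \ref{lem:artin}. Fixing a maximal coset $\mathbf z G \subset X$, I would first partition each exponent set $L_i = \mathrm{Supp}(P_i)$ into character classes $L_{i,\chi} = \{\boldsymbol\lambda \in L_i : \mathbf g^{\boldsymbol\lambda} = \chi(\mathbf g) \text{ for all } \mathbf g \in G\}$, where $\chi$ ranges over the characters of $G$; two exponents lie in the same class precisely when their difference lies in $\Lambda_G$. I would then take $\Lambda$ to be the subgroup of $\mathbb Z^n$ generated by all differences $\boldsymbol\lambda' - \boldsymbol\lambda$ with $\boldsymbol\lambda, \boldsymbol\lambda' \in L_{i,\chi}$ for some $i$ and some $\chi$. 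By construction each generator belongs to $\Lambda_G$, so $\Lambda \subset \Lambda_G$ and $G \subset H_\Lambda$; moreover $\Lambda$ admits a generating set of the form required by the statement.

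The core step is to promote the inclusion $G \subset H_\Lambda$ to $\mathbf z H_\Lambda \subset X$. For $\mathbf x \in H_\Lambda$ the monomials $\mathbf x^{\boldsymbol\lambda}$ attached to exponents lying in a single class $L_{i,\chi}$ all take the same value $c_{i,\chi}$, so $P_i(\mathbf z \mathbf x) = \sum_{\chi} c_{i,\chi}\bigl(\sum_{\boldsymbol\lambda \in L_{i,\chi}} a_{i,\boldsymbol\lambda}\mathbf z^{\boldsymbol\lambda}\bigr)$. Since $P_i(\mathbf z G) = 0$, Lemma \ref{lem:artin} applied to $P_i$ kills each inner sum, whence $P_i(\mathbf z \mathbf x) = 0$ for every $i$ and every $\mathbf x \in H_\Lambda$. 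Thus $\mathbf z H_\Lambda \subset X$, and the first statement follows by maximality: $\mathbf z H_\Lambda$ is a coset contained in $X$ and containing $\mathbf z G$, hence equals $\mathbf z G$, forcing $G = H_\Lambda$.

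For the second statement I would pass to the saturation $\tilde\Lambda$. Since $G$ is a linear torus, its character lattice $\Lambda_G$ is saturated, so $\tilde\Lambda \subset \Lambda_G$ and therefore $H_{\tilde\Lambda} \supset G$. Because $\tilde\Lambda$ is saturated, and hence $p$-full for $p = \mathrm{char}(K)$, Lemma \ref{lem:HLambda} identifies $H_{\tilde\Lambda}$ with a single linear torus. The coset $\mathbf z H_{\tilde\Lambda}$ sits inside $\mathbf z H_\Lambda \subset X$ and is a torus coset, so maximality among torus cosets forces $G = H_{\tilde\Lambda}$. I expect the main obstacle to lie in the second step: one must partition each $L_i$ by the actual characters of $G$ (that is, by cosets of $\Lambda_G$, not of $\Lambda$) so that Lemma \ref{lem:artin} can be invoked to annihilate the inner sums; once that is set up correctly, the rest is bookkeeping with lattices and saturations.
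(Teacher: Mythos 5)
Your proof is correct and follows essentially the same route as the paper: partition each $\mathrm{Supp}(P_i)$ into character classes of $G$, invoke Lemma \ref{lem:artin} to annihilate the inner sums $\sum_{\boldsymbol\lambda\in L_{i,\chi}}a_{i,\boldsymbol\lambda}\mathbf z^{\boldsymbol\lambda}$, deduce $\mathbf z H_\Lambda\subset X$, and conclude by maximality. The only (harmless) divergence is in the torus-coset case, where the paper passes through the connected component $H^0$ of $H_\Lambda$ and then identifies $H^0$ with $H_{\tilde\Lambda}$, whereas you get $G\subset H_{\tilde\Lambda}$ directly from the saturatedness of $\Lambda_G$; both arguments are valid.
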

\begin{proof}Suppose that $\mathbf zG\subset X$ is maximal. Let us define $L_{i,\chi} = \{\boldsymbol\lambda\in L_i: \chi_{\boldsymbol\lambda}=\chi\}$. By Lemma \ref{lem:artin} we have $$\sum_{\boldsymbol\lambda\in L_{i,\chi}}a_{i,\boldsymbol\lambda}\mathbf z^{\boldsymbol{\lambda}}=0$$for every $i$ and $\chi$ such that $L_{i,\chi}\ne \emptyset$.\\
Let $\Lambda=\sum_{\chi}\sum_{i=1}^m\sum_{\boldsymbol{\lambda,\lambda}'\in L_{i,\chi}} \mathbb Z(\boldsymbol {\lambda}-\boldsymbol{\lambda}')$. By definition of $L_{i,\chi}$ we find $G<H_{\Lambda}$. On the other hand note that for all $\mathbf h\in H_\Lambda$ and all $\chi$ and $i$ the term $\mathbf h^{\boldsymbol\lambda}$ is always the same, say $h_{i,\chi}$, for all $\boldsymbol\lambda\in L_{i,\chi}$ by definition of $\Lambda$. Therefore we can write $$P_i(\mathbf {zh})=\sum_{\chi:L_{i,\chi}\ne\emptyset}\left(\sum_{\boldsymbol\lambda\in L_{i,\chi}}a_{i,\boldsymbol\lambda}\mathbf z^{\boldsymbol{\lambda}}\right)h_{i,\chi}=0$$ and find $\mathbf z H_\Lambda\subset X$. This holds in both cases. Since $\mathbf zG$ is a maximal coset in $X$ we have $G=H_\Lambda.$ in the first case. In the second case we note that $G$ is an irreducible subset of $H_\Lambda$ which contains the identity. Thus the connected component of the identity $H^0$ of $H_\Lambda$ contains $G$. Thus we have $\mathbf zG\subset \mathbf z H^0\subset X$. Since $H^0$ is a linear torus and by the maximality of $\mathbf zG$ among torus cosets in $X$ we find $G=H^0$. Since $H_{\tilde\Lambda}$ is also a linear torus inside $H_\Lambda$ with finite index, we have $G=H_{\tilde\Lambda}$.\end{proof}

\begin{corollary}\label{cor:surj}Every algebraic subgroup $G$ of $\mathbb G_m^n$ is of type $H_\Lambda$ for some subgroup $\Lambda$ of $\mathbb Z^n$. \end{corollary}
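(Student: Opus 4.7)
The plan is to derive this corollary as an immediate consequence of Lemma \ref{lem:3214}. Given an arbitrary algebraic subgroup $G<\mathbb G_m^n$, I take the algebraic set $X=G$ itself, presented via some defining Laurent polynomials $P_1,\dots,P_m$.

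Next I observe that $G$ contains $\mathbf 1$, so $\mathbf 1\cdot G=G$ is a coset of $G$ contained in $X=G$. I claim this coset is maximal among cosets of algebraic subgroups contained in $X$: if $\mathbf z'G'\subset G$ for some algebraic subgroup $G'$ strictly containing $G$, then $\mathbf z'G'$ has strictly larger cardinality (or dimension, whichever is relevant) than $G$ itself, contradicting the containment in $G$. Hence the coset $\mathbf 1\cdot G$ is maximal in $X$.

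Applying the first assertion of Lemma \ref{lem:3214} to the maximal coset $\mathbf 1\cdot G\subset X$ yields a subgroup $\Lambda<\mathbb Z^n$ generated by differences of exponents of the defining polynomials, with the property that $G=H_\Lambda$. This is exactly the statement of the corollary.

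The only point requiring care is the interpretation of "maximal coset" in Lemma \ref{lem:3214}; once one confirms that $\mathbf 1\cdot G$ is maximal in the required sense (which is immediate since no algebraic subgroup strictly larger than $G$ can have a coset sitting inside $G$), the result drops out with no further work.
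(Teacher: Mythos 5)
Your proposal is correct and is essentially the paper's own proof: the paper also derives the corollary by applying Lemma \ref{lem:3214} to $X=G$ with its maximal coset $G=\mathbf 1\cdot G$. Your maximality check is fine (and in fact trivial: any coset properly containing $G$ cannot be contained in $X=G$, so no appeal to cardinality or dimension is needed).
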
\begin{proof}Apply Lemma \ref{lem:3214} to $G$ and its maximal coset $G$.\end{proof}

\begin{corollary}\label{cor:fin}Let $X\subset \mathbb G_m^n$ be an algebraic subset. Then there are only finitely many linear tori $G<\mathbb G_m^n$ with the property that there exists $\mathbf x\in \mathbb G_m^n$ such that $\mathbf x G$ is a maximal torus coset in $X$.\end{corollary}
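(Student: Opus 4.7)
The plan is to leverage Lemma \ref{lem:3214} directly: the hard combinatorial work is already done there, and what remains is a finiteness observation about the possible inputs to the description.

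First I would fix defining Laurent polynomials $P_1,\dots,P_m$ for $X$ and set $L_i=\mathrm{Supp}(P_i)\subset\mathbb Z^n$. Each $L_i$ is a finite set by definition of a Laurent polynomial, so the difference set
$$D=\bigcup_{i=1}^m\bigl\{\boldsymbol\lambda'-\boldsymbol\lambda:\boldsymbol\lambda,\boldsymbol\lambda'\in L_i\bigr\}\subset\mathbb Z^n$$
is finite.

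Next I would apply the second half of Lemma \ref{lem:3214}: if $\mathbf xG$ is a maximal torus coset contained in $X$, then $G=H_{\tilde\Lambda}$ for some subgroup $\Lambda<\mathbb Z^n$ generated by vectors in $D$. Since $D$ is finite, the collection of subgroups of $\mathbb Z^n$ arising as $\sum_{\mathbf v\in S}\mathbb Z\mathbf v$ for some subset $S\subseteq D$ has cardinality at most $2^{|D|}$, hence is finite. Each such $\Lambda$ has a unique saturation $\tilde\Lambda$ (hence at most finitely many saturations occur), and Lemma \ref{lem:HLambda} (or the definition) shows that $\tilde\Lambda$ determines $H_{\tilde\Lambda}$. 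Thus only finitely many linear tori $G$ can appear.

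There is essentially no substantive obstacle, since Lemma \ref{lem:3214} already encapsulates the algebraic content. The only care needed is to invoke the second statement of that lemma (the one producing a linear torus $H_{\tilde\Lambda}$) rather than the first (which produces a possibly disconnected $H_\Lambda$), so that we correctly match the conclusion of the corollary, which concerns linear tori and torus cosets.
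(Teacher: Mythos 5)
Your proposal is correct and follows essentially the same route as the paper: invoke the second statement of Lemma \ref{lem:3214} to write $G=H_{\tilde\Lambda}$ with $\Lambda$ generated by differences of exponent vectors from the finite supports $L_i$, and conclude that only finitely many $\Lambda$, hence finitely many $G$, can occur. Your explicit bound of $2^{|D|}$ on the number of candidate subgroups just makes the paper's terse finiteness observation more concrete.
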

\begin{proof}Assume that $\mathbf x G$ is a maximal torus coset in $X$. By Lemma \ref{lem:3214} there exists $\Lambda<\mathbb Z^n$ generated by vectors of type $\boldsymbol\lambda'-\boldsymbol\lambda$ with $\boldsymbol\lambda',\boldsymbol\lambda$ in a finite set $L$ such that $G=H_{\tilde\Lambda}$. Since the generators come from a finite set, there are also only finitely many $\Lambda$ and hence $G$ is from a finite set.\end{proof}

\begin{definition}Let $\Lambda<\mathbb Z^n$ be a subgroup and $p$ a prime or $0$. The $p$-saturation $\Lambda_p$ is defined by $$\Lambda_p=\{\mathbf v\in \mathbb Z^n: \text{ there exists }k\ge 0 \text{ in }\mathbb{Z}: p^k\mathbf v\in \Lambda\}\text{ if }p\text{ is a prime, and }\Lambda_0=\Lambda.$$\end{definition}
\begin{remark}The $p$-saturation of a subgroup $\Lambda$ is $p$-full.\end{remark}
\begin{lemma}\label{lem:Heq}Let $\Lambda<\mathbb Z^n$ and char$(K)=p\ge0.$ We have $H_\Lambda=H_{\Lambda_p}.$\end{lemma}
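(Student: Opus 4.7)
The plan is to observe that the statement is trivial when $p=0$ (since $\Lambda_0=\Lambda$), so I only need to treat the case where $p$ is a prime. I will prove the two inclusions separately, using only the definitions of $H_\Lambda$ and $\Lambda_p$, plus the Frobenius identity in characteristic $p$.

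First I would note that $\Lambda\subset \Lambda_p$ (take $k=0$ in the definition of $\Lambda_p$). The definition of $H_{(\cdot)}$ is contravariant in the subgroup, so this immediately gives $H_{\Lambda_p}\subset H_\Lambda$.

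For the other inclusion, let $\mathbf x\in H_\Lambda$ and $\mathbf v\in\Lambda_p$. By definition there exists $k\ge 0$ such that $p^k\mathbf v\in\Lambda$, hence $\mathbf x^{p^k\mathbf v}=1$, i.e.\ $(\mathbf x^{\mathbf v})^{p^k}=1$. Writing $y=\mathbf x^{\mathbf v}\in K^*$, the polynomial $Y^{p^k}-1$ factors as $(Y-1)^{p^k}$ in characteristic $p$ by the Frobenius, so the only root in $K$ is $1$. Thus $y=\mathbf x^{\mathbf v}=1$. Since this holds for every $\mathbf v\in\Lambda_p$, we get $\mathbf x\in H_{\Lambda_p}$, proving $H_\Lambda\subset H_{\Lambda_p}$.

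There is no real obstacle here; the key input is the purely inseparable nature of the $p^k$-power map in characteristic $p$. Combining the two inclusions yields $H_\Lambda=H_{\Lambda_p}$, as desired.
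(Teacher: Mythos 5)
Your proof is correct and follows essentially the same route as the paper's: reduce to $p>0$, get $H_{\Lambda_p}\subset H_\Lambda$ from $\Lambda\subset\Lambda_p$, and deduce the reverse inclusion from the fact that $Y^{p^k}-1=(Y-1)^{p^k}$ in characteristic $p$. No gaps.
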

\begin{proof}If $p=0$ we have $\Lambda_0=\Lambda$ and the equality obviously holds. Thus we may assume that $p>0$. Since $\Lambda\subset\Lambda_p$ it suffices to show $H_\Lambda\subset H_{\Lambda_p}$. Let $\mathbf x\in H_\Lambda$ and $\mathbf v\in \Lambda_p$. There exists a natural number $k$ such that $p^k\mathbf v\in \Lambda$. Therefore $\mathbf x^{p^k\mathbf v}=1$. Since the characteristic of the field is $p>0$ we have $(x+y)^{p^k}=x^{p^k}+y^{p^k}$ and hence we find $0=\mathbf{x^v}^{p^k}-1^{p^k}=(\mathbf {x^v}-1)^{p^k}$ which implies that $\mathbf {x^v}=1$ and thus $\mathbf x\in H_{\Lambda_p}$.\end{proof}

\begin{theorem}\label{thm:bij}The map $\Lambda\mapsto H_{\Lambda}$ is a bijection between $p$-full subgroups of $\mathbb Z^n$ and the algebraic subgroups of $\mathbb G_m^n$. The inverse is given by $G\mapsto \Lambda_G$. Moreover we have $\mathrm{rk}(\Lambda)+\dim(H_\Lambda)=n$ and $H_\Lambda$ is a linear torus if and only if $\Lambda$ is primitive.\end{theorem}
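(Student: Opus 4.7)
The plan is to assemble the theorem directly from the material already proved in this appendix. The essential inputs are Lemma \ref{lem:HLambda}, which for every $p$-full $\Lambda$ gives $\Lambda_{H_\Lambda}=\Lambda$, $\dim(H_\Lambda)=n-\mathrm{rk}(\Lambda)$, and a disjoint decomposition of $H_\Lambda$ into $[\tilde\Lambda:\Lambda]$ torsion cosets of top dimension; Corollary \ref{cor:surj}, which realises every algebraic subgroup $G<\mathbb G_m^n$ as some $H_{\Lambda'}$; and Lemma \ref{lem:Heq}, which identifies $H_{\Lambda'}$ with $H_{\Lambda'_p}$.

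First I would check that $G\mapsto\Lambda_G$ lands in the set of $p$-full subgroups so that the two maps at least have the right domain and codomain. In characteristic zero this is vacuous. In characteristic $p>0$, suppose $p\mathbf v\in\Lambda_G$. Then for every $\mathbf x\in G$ one has $(\mathbf x^{\mathbf v})^p=\mathbf x^{p\mathbf v}=1$, and the injectivity of Frobenius on $K$ forces $\mathbf x^{\mathbf v}=1$, so $\mathbf v\in\Lambda_G$. This is the only small computation required in the whole argument.

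Next I would establish the bijection together with the formula for the inverse. Lemma \ref{lem:HLambda} directly gives $\Lambda_{H_\Lambda}=\Lambda$ for every $p$-full $\Lambda$, so $\Lambda\mapsto H_\Lambda$ is injective on $p$-full subgroups and $G\mapsto\Lambda_G$ is a left inverse. For surjectivity onto algebraic subgroups, given $G$ I would use Corollary \ref{cor:surj} to write $G=H_{\Lambda'}$ for some $\Lambda'<\mathbb Z^n$, and then Lemma \ref{lem:Heq} to rewrite $G=H_{\Lambda'_p}$, whose index group $\Lambda'_p$ is $p$-full by construction. The opposite relation $H_{\Lambda_G}=G$ then follows immediately by choosing a $p$-full $\Lambda$ with $G=H_\Lambda$ and invoking $\Lambda_G=\Lambda_{H_\Lambda}=\Lambda$. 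The rank/dimension formula $\mathrm{rk}(\Lambda)+\dim(H_\Lambda)=n$ is already stated inside Lemma \ref{lem:HLambda}.

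For the final clause, Lemma \ref{lem:HLambda} exhibits $H_\Lambda$ as a disjoint union of $[\tilde\Lambda:\Lambda]$ torsion cosets, each of the full dimension $\dim(H_\Lambda)$. When $\Lambda$ is primitive, $[\tilde\Lambda:\Lambda]=1$ and $H_\Lambda$ is a single torsion coset through the identity, hence a linear torus. Conversely, when $[\tilde\Lambda:\Lambda]\ge 2$, $H_\Lambda$ splits into at least two disjoint closed subsets of equal positive dimension and therefore cannot be irreducible. There is no real obstacle in the argument; the statement is essentially a bookkeeping corollary of the preceding lemmas, and the only nontrivial input not already stated is the Frobenius check used to see that $\Lambda_G$ is $p$-full.
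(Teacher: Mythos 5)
Your proof is correct and follows essentially the same route as the paper: surjectivity via Corollary \ref{cor:surj} and Lemma \ref{lem:Heq}, injectivity and the remaining claims via Lemma \ref{lem:HLambda}. Your explicit Frobenius check that $\Lambda_G$ is $p$-full, and your spelling out of the irreducibility argument for the last clause, are details the paper leaves implicit, so if anything your write-up is slightly more complete.
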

\begin{proof}Corollary \ref{cor:surj} and Lemma \ref{lem:Heq} show that the map is surjective. For injectivity assume that $\Lambda$ and $\Lambda'$ are two $p$-full subgroups of $\mathbb Z^n$ such that $H_\Lambda=H_{\Lambda'}$. By Lemma \ref{lem:HLambda} we have $\Lambda=\Lambda_{H_\Lambda}=\Lambda_{H_{\Lambda'}}=\Lambda'$. This proves bijectivity.
The other statements follow directly from Lemma \ref{lem:HLambda}.\end{proof}

\begin{definition}For a prime $p$ let $R_p$ be the localisation of $\mathbb Z$ at the powers of $p$.\end{definition}

\begin{lemma}\label{lem:bij}Let $n\in\mathbb{N}$ and $p$ be a prime. Then there is a bijection between the submodules $M$ of $R_p^n$ and the $p$-full subgroups $\Lambda$ of $\mathbb Z^n$. The maps are given by $M\mapsto M\cap\mathbb Z^n$ and $\Lambda\mapsto R\Lambda$.\end{lemma}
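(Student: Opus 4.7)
The plan is to verify the two assignments are mutually inverse and land in the claimed target sets. I would organize it into four short verifications, relying throughout on the explicit description of $R_p$ as $\{a/p^k : a\in\mathbb Z,\ k\ge 0\}\subset\mathbb Q$, so that every element of $R_p^n$ can be written as $p^{-k}\mathbf a$ for some common $k\ge 0$ and $\mathbf a\in\mathbb Z^n$.

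First I would check that the assignments land in the right sets. If $M\subset R_p^n$ is an $R_p$-submodule, then $M\cap\mathbb Z^n$ is obviously a subgroup of $\mathbb Z^n$. For $p$-fullness, suppose $\mathbf v\in\mathbb Z^n$ with $p\mathbf v\in M\cap\mathbb Z^n$; then $\mathbf v = p^{-1}(p\mathbf v)\in R_p\cdot M = M$, and as $\mathbf v\in\mathbb Z^n$ already, we get $\mathbf v\in M\cap\mathbb Z^n$. Conversely, if $\Lambda<\mathbb Z^n$ is any subgroup, then $R_p\Lambda$ is by definition an $R_p$-submodule of $R_p^n$, so the map $\Lambda\mapsto R_p\Lambda$ has the correct codomain.

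Next I would show that the two compositions are the identity. For $\Lambda$ a $p$-full subgroup, the inclusion $\Lambda\subset (R_p\Lambda)\cap\mathbb Z^n$ is immediate. For the reverse, an element of $(R_p\Lambda)\cap\mathbb Z^n$ can be written as $p^{-k}\boldsymbol\lambda$ with $\boldsymbol\lambda\in\Lambda$ and $k\ge 0$, and also lies in $\mathbb Z^n$. Multiplying by $p$ repeatedly and applying the $p$-fullness hypothesis $k$ times descends from $p^k(p^{-k}\boldsymbol\lambda)=\boldsymbol\lambda\in\Lambda$ to $p^{-k}\boldsymbol\lambda\in\Lambda$, as required. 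For $M$ an $R_p$-submodule, the inclusion $R_p(M\cap\mathbb Z^n)\subset M$ is automatic. For the reverse inclusion, take $\mathbf m\in M$ and write $\mathbf m = p^{-k}\mathbf a$ with $\mathbf a\in\mathbb Z^n$, $k\ge 0$; then $\mathbf a = p^k\mathbf m\in M$ (since $p^k\in\mathbb Z\subset R_p$), so $\mathbf a\in M\cap\mathbb Z^n$ and hence $\mathbf m = p^{-k}\mathbf a\in R_p(M\cap\mathbb Z^n)$.

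There is no real obstacle here; the only point requiring care is the induction on $k$ in the proof that $(R_p\Lambda)\cap\mathbb Z^n\subset\Lambda$, which is exactly where the $p$-fullness hypothesis on $\Lambda$ is used. Everything else is formal manipulation of the localization.
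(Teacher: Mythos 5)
Your proof is correct and follows essentially the same route as the paper: verify that $M\cap\mathbb Z^n$ is $p$-full using $1/p\in R_p$, that $R_p\Lambda$ is a submodule, and that both compositions are the identity by clearing or reintroducing powers of $p$ (with $p$-fullness used, via descent on the exponent, exactly where you say it is). The only point both you and the paper treat lightly is that a general element of $R_p\Lambda$ is a finite $R_p$-linear combination of elements of $\Lambda$, which reduces to the single-term form $p^{-k}\boldsymbol\lambda$ only after taking a common denominator and using that $\Lambda$ is a group; this is immediate and does not affect correctness.
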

\begin{proof}We abbreviate $R=R_p$. First we show that both maps are well defined. Let $M$ be a submodule of $R^n$. Clearly $M\cap\mathbb Z^n$ is a subgroup of $\mathbb{Z}^n$. Assume that $p\mathbf v\in M\cap \mathbb{Z}^n$ for some $\mathbf v\in \mathbb{Z}^n$. There exists $\mathbf m\in M$ such that $p\mathbf{ v=m}$ and therefore $\mathbf v=\mathbf m/p\in M$ since $1/p\in R$ and hence $\mathbf v\in M\cap \mathbb{Z}^n$. Thus the first map is well defined. That $R\Lambda$ is a submodule of $R^n$ is straight forward.\\
Now we look at the compositions of the maps and show first that $R(M\cap\mathbb{Z}^n)=M$. The inclusion $\subset$ is immediate since $RM = M$. For the other direction let $\mathbf m\in M$. Then there exists $r\in\mathbb{Z}$ and $\mathbf v\in\mathbb{Z}^n$ such that $\mathbf m=p^r\mathbf v$. We have $\mathbf v= p^{-r}\mathbf m\in M$, so $\mathbf v\in M\cap\mathbb{Z}^n$ and hence $\mathbf m=p^r\mathbf v\in R(M\cap \mathbb Z^n)$.
Secondly we want to show that $(R\Lambda)\cap\mathbb Z^n=\Lambda$. Here the inclusion $\supset$ is direct. For the other direction let $\mathbf w=r\mathbf v\in (R\Lambda)\cap\mathbb Z^n$ where $r\in R$ and $\mathbf v\in\Lambda$. If $r\in\mathbb{Z}$, then $\mathbf w\in\Lambda$. Else we write $r=p^{-s}k$ for an integer $k$ and $s\in\mathbb{N}$. Then we find that $p^{s}\mathbf w = k\mathbf v\in\Lambda$. Since $\Lambda$ is $p$-full we find also in this case that $\mathbf w\in \Lambda$.
\end{proof}

\begin{corollary}If char$(K)=p>0$, the map $M\mapsto H_{M\cap \mathbb Z^n}$ is a bijection between the submodules of $R_p^n$ and the algebraic subgroups of $\mathbb{G}_m^n$.\end{corollary}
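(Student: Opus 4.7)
The plan is to obtain this bijection as the composition of the two bijections already established in this appendix. By Lemma \ref{lem:bij} the map $M \mapsto M \cap \mathbb Z^n$ is a bijection between the submodules of $R_p^n$ and the $p$-full subgroups of $\mathbb Z^n$ (with inverse $\Lambda \mapsto R_p \Lambda$). By Theorem \ref{thm:bij}, since $\mathrm{char}(K)=p>0$, the map $\Lambda \mapsto H_\Lambda$ is a bijection between the $p$-full subgroups of $\mathbb Z^n$ and the algebraic subgroups of $\mathbb G_m^n$ (with inverse $G \mapsto \Lambda_G$).

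First I would observe that composing these two bijections yields exactly the map $M \mapsto H_{M \cap \mathbb Z^n}$ claimed in the corollary. Being a composition of bijections, it is itself a bijection, and its inverse is $G \mapsto R_p \, \Lambda_G$.

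There is essentially no obstacle here, since all the substantive content is already packaged in Lemma \ref{lem:bij} and Theorem \ref{thm:bij}; the corollary is a purely formal consequence. The only thing worth remarking is that the hypothesis $p>0$ is needed to invoke Lemma \ref{lem:bij}, which is formulated for a prime $p$, so the case distinction in Theorem \ref{thm:bij} between $p=0$ and $p>0$ does not cause trouble.
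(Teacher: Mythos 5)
Your proposal matches the paper's proof exactly: the paper also obtains the corollary by combining Theorem \ref{thm:bij} and Lemma \ref{lem:bij}, i.e.\ by composing the two bijections. The argument is correct and no further comment is needed.
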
\begin{proof}Combine Theorem \ref{thm:bij} and Lemma \ref{lem:bij}.\end{proof}

\begin{definition}A monoidal transform $\psi: \mathbb G_m^n\to \mathbb G_m^n$ is an automorphism given by $\mathbf x\mapsto\mathbf x^A$ for some $A \in \mathrm{GL}_n(\mathbb Z).$\end{definition}

\begin{lemma}\label{lem:redTorsCoset}Let $C\subset \mathbb G_m^n$ be a torsion coset of dimension $d$. Then there exists a monoidal transformation $\psi$ and $\boldsymbol\eta\in(\mathbb G_m^{n-d})_{\mathrm{tors}}$ such that $\psi(C)=\{\boldsymbol\eta\}\times \mathbb G_m^d.$\end{lemma}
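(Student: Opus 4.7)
The plan is to reduce everything to the structure of the underlying linear torus. Write $C = \boldsymbol\zeta G$ for some torsion point $\boldsymbol\zeta \in (\mathbb G_m^n)_{\mathrm{tors}}$ and some linear torus $G$ of dimension $d$. By Theorem \ref{thm:bij} we have $G = H_\Lambda$ for a uniquely determined primitive $p$-full subgroup $\Lambda < \mathbb Z^n$ of rank $n-d$, where $p = \mathrm{char}(K)$.

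The primitivity of $\Lambda$ means that $\mathbb Z^n/\Lambda$ is torsion-free, so any $\mathbb Z$-basis $\mathbf b_1, \dots, \mathbf b_{n-d}$ of $\Lambda$ extends to a basis $\mathbf b_1, \dots, \mathbf b_n$ of $\mathbb Z^n$. Let $B = (\mathbf b_1, \dots, \mathbf b_n) \in \mathrm{GL}_n(\mathbb Z)$ and define the monoidal transform $\psi: \mathbb G_m^n \to \mathbb G_m^n$ by $\psi(\mathbf x) = \mathbf x^B$, so that the $i$-th coordinate of $\psi(\mathbf x)$ is $\mathbf x^{\mathbf b_i}$. Since $\Lambda$ is generated by $\mathbf b_1, \dots, \mathbf b_{n-d}$, the condition $\mathbf x \in G = H_\Lambda$ is equivalent to $\mathbf x^{\mathbf b_i} = 1$ for $i = 1, \dots, n-d$; in other words
\[
\psi(G) = \{\mathbf 1_{n-d}\} \times \mathbb G_m^d.
\]

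Applying $\psi$ to $C = \boldsymbol\zeta G$ and using that $\psi$ is a group automorphism gives $\psi(C) = \psi(\boldsymbol\zeta) \cdot (\{\mathbf 1_{n-d}\} \times \mathbb G_m^d)$. Writing $\psi(\boldsymbol\zeta) = (\boldsymbol\eta, \boldsymbol\mu)$ with $\boldsymbol\eta \in \mathbb G_m^{n-d}$ and $\boldsymbol\mu \in \mathbb G_m^d$, the second factor absorbs $\boldsymbol\mu$ and we obtain $\psi(C) = \{\boldsymbol\eta\} \times \mathbb G_m^d$. Finally, if $\boldsymbol\zeta^N = \mathbf 1$, then $\psi(\boldsymbol\zeta)^N = \mathbf 1$, so $\boldsymbol\eta$ has finite order, as required.

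No real obstacle arises; the content of the proof is the combination of Theorem \ref{thm:bij} (to translate the linear torus $G$ into the primitive sublattice $\Lambda$) with the standard fact that a primitive sublattice of $\mathbb Z^n$ admits a basis that extends to a basis of $\mathbb Z^n$. The only point deserving mild care is ensuring one uses the column convention for $\mathbf x^B$ that is fixed in Section \ref{sec:not} so that $\psi(G)$ lands in the first $n-d$ coordinates being trivial.
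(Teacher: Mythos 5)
Your proof is correct and follows essentially the same route as the paper: both arguments pick $B\in\mathrm{GL}_n(\mathbb Z)$ whose first $n-d$ columns form a basis of the primitive lattice $\Lambda_G$ and apply the monoidal transform $\mathbf x\mapsto \mathbf x^B$. You merely spell out the "straightforward computations" that the paper leaves implicit, which is fine.
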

\begin{proof}Let $G$ be the linear torus in $\mathbb G_m^n$ and $\boldsymbol\zeta\in(\mathbb G_m^n)_{\mathrm{tors}}$ such that $C=\boldsymbol\zeta G$. Let $\Lambda=\Lambda_G$. Let $B=(\mathbf b_1,\dots, \mathbf b_n)= (B_l,B_r)\in\mathbb \mathrm{GL}_n(\mathbb Z)$ such that $B_l\mathbb Z^{n-d} = \Lambda$. Let $\psi:\mathbf x\mapsto \mathbf x^B$ and $\boldsymbol{\eta = \zeta}^{B_l}$. Then using $B^{-1}$ one can show that $\psi(\boldsymbol\zeta G)=\{\boldsymbol\eta\}\times \mathbb G_m^d$ doing straight forward computations. \end{proof}

\section{The gcd of the determinats of the minors of a matrix}\label{sec:gcd}
The aim of this apppendix is to give two lemmas which imply that the product of the first $k$ entries of the diagonal matrix in the Smith Normal Form of $A$ is equal to the gcd of the determinats of the $k$-minors of $A$. This is used to bound the number of isolated torsion points.

\begin{definition}For $n,m\in \mathbb{N}$, a matrix $A=(a_{i,j})\in\mathrm{Mat}_{n\times m}(\mathbb{Z})$, $\emptyset\neq I\subset\{1,\dots, n\}$ and $\emptyset\neq J\subset\{1,\dots, m\}$ we define ${}_IA$ as the $\#I\times m$ matrix with entries $a_{i,j}$ where $i$ and $j$ range over $I$ and $\{1,…,n\}$, respectively, and in increasing
order. Similarly $A_J$ denotes the $n\times \#J$ matrix with entries $a_{i,j}$ where $i$ and $j$ range over $\{1,…,n\}$ and $J$, respectively, and in increasing order. Finally we see that $({}_IA)_J={}_I(A_J)$ and denote this matrix by ${}_IA_J$.\end{definition}
\begin{remark}For natural numbers $k,m,n,$ matrices $A\in\mathrm{Mat}_{k\times m}(\mathbb{Z}), B\in\mathrm{Mat}_{m\times n}(\mathbb{Z}), \emptyset\neq I\subset\{1,\dots, k\}$ and $\emptyset\neq J\subset\{1,\dots, n\}$ we have ${}_I(AB)={}_IAB$ and $(AB)_J=AB_J$.\end{remark}

\begin{definition}For $n,m\in\mathbb{N}$ and $A\in \mathrm{Mat}_{n\times m}(\mathbb{Z})$ let us define for $k=1,\dots,\min\{n,m\},$ $$d_k(A)=\gcd\{\det({}_IA_J): I\subset \{1,\dots n\}, J\subset \{1,\dots m\},\#I = k = \#J\}\in\mathbb{Z}_{\ge 0}$$ the gcd of the determinants of the $k\times k$-minors of $A$.\end{definition}

\begin{theorem}\label{thm:CauchyBinet} Let $m,n\in\mathbb{N}, A\in\mathrm{Mat}_{n\times m}(\mathbb{Z})$ and $B\in\mathrm{Mat}_{m\times n}(\mathbb{Z})$. Then we have $$\det(AB)=\sum_{\substack{I\subset \{1,\dots, m\}\\ \#I = n}} \det(A_I)\det({}_IB).$$\end{theorem}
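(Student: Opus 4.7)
The plan is to prove Cauchy-Binet directly from the Leibniz expansion of the determinant, separating the trivial degenerate case from the generic one.

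First I would dispose of the case $n>m$. In this situation the sum on the right-hand side is empty (there are no $n$-subsets of $\{1,\dots,m\}$), so it equals zero; and $AB\in\mathrm{Mat}_n(\mathbb{Z})$ has rank at most $\mathrm{rk}(A)\le m<n$, so $\det(AB)=0$ as well. Hence we may assume $n\le m$.

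Next I would expand both factors via Leibniz. Writing $(AB)_{i,j}=\sum_{k=1}^m A_{i,k}B_{k,j}$ and interchanging the order of summation gives
$$\det(AB)=\sum_{\sigma\in S_n}\mathrm{sgn}(\sigma)\prod_{i=1}^n\sum_{k=1}^m A_{i,k}B_{k,\sigma(i)}=\sum_{f:\{1,\dots,n\}\to\{1,\dots,m\}}\left(\prod_{i=1}^n A_{i,f(i)}\right)\sum_{\sigma\in S_n}\mathrm{sgn}(\sigma)\prod_{i=1}^n B_{f(i),\sigma(i)}.$$
The inner sum over $\sigma$ equals $\det(M_f)$, where $M_f$ is the $n\times n$ matrix whose $i$-th row is the $f(i)$-th row of $B$. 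If $f$ is not injective two rows of $M_f$ coincide, so $\det(M_f)=0$ and only injective $f$ contribute.

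Finally I would group the injective $f$ by their image. For each $n$-subset $I=\{i_1<\dots<i_n\}\subset\{1,\dots,m\}$ and each $\tau\in S_n$, there is a unique injective $f$ with image $I$ and $f(j)=i_{\tau(j)}$. For such $f$ one checks that $M_f$ is obtained from ${}_IB$ by the row permutation $\tau$, so $\det(M_f)=\mathrm{sgn}(\tau)\det({}_IB)$; simultaneously $\prod_i A_{i,f(i)}=\prod_i A_{i,i_{\tau(i)}}$. Substituting and recognising the Leibniz expansion of $\det(A_I)$ in the sum over $\tau$ yields
$$\det(AB)=\sum_{\#I=n}\det({}_IB)\sum_{\tau\in S_n}\mathrm{sgn}(\tau)\prod_{i=1}^n A_{i,i_{\tau(i)}}=\sum_{\#I=n}\det(A_I)\det({}_IB),$$
as required. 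The only slightly delicate point is the sign bookkeeping in the last step, i.e.\ verifying that the permutation relating $M_f$ to ${}_IB$ is exactly $\tau$; this is a routine check since both matrices have the same multiset of rows but ordered by $f$ versus by the natural order on $I$.
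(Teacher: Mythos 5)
Your argument is correct and complete. Note, however, that the paper does not actually prove this statement: its ``proof'' consists of a single sentence citing a reference (page 27--28 of \cite{CB}), so there is no argument in the paper to compare against. What you supply is the standard self-contained derivation of the Cauchy--Binet formula from the Leibniz expansion: the degenerate case $n>m$ is correctly handled by a rank argument, the interchange of the product over $i$ with the sums over $k$ correctly produces the sum over functions $f:\{1,\dots,n\}\to\{1,\dots,m\}$, non-injective $f$ are correctly killed by repeated rows, and the regrouping of injective $f$ by image $I$ with $f(j)=i_{\tau(j)}$ gives $\det(M_f)=\mathrm{sgn}(\tau)\det({}_IB)$, so that the sum over $\tau$ reassembles into the Leibniz expansion of $\det(A_I)$. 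The sign bookkeeping you flag as the delicate point does check out: $M_f$ is ${}_IB$ with its rows permuted by $\tau$, and $\mathrm{sgn}(\tau)=\mathrm{sgn}(\tau^{-1})$ so the orientation of the permutation is immaterial. Your version has the advantage of making the appendix self-contained, at the cost of a page of routine computation that the author chose to outsource.
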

\begin{proof}This is the Cauchy-Binet formula. A proof can be found in \cite{CB}, page 27-28.\end{proof}

\begin{lemma}\label{lem:diInv}Let $n,m\in\mathbb{N}, A\in \mathrm{Mat}_{n\times m}(\mathbb{Z})$, $P\in\mathrm{GL}_n(\mathbb{Z})$  and $Q\in\mathrm{GL}_m(\mathbb{Z})$. Then for all $k=1,\dots, \min\{n,m\}$ we have $d_k(PAQ)=d_k(A)$.\end{lemma}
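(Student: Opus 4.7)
The plan is to show that $d_k$ is invariant under left and right multiplication by invertible integer matrices, using the Cauchy--Binet formula (Theorem \ref{thm:CauchyBinet}) twice. The invariance under $P$ and under $Q$ together give the claim.

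First I would record the compatibility of the row/column selection with matrix multiplication: for any $I\subset\{1,\dots,n\}$ and $J\subset\{1,\dots,m\}$ of size $k$, one has ${}_I(PAQ)_J = ({}_IP)\,A\,(Q_J)$, where ${}_IP\in\mathrm{Mat}_{k\times n}(\mathbb{Z})$ and $Q_J\in\mathrm{Mat}_{m\times k}(\mathbb{Z})$. Applying Theorem \ref{thm:CauchyBinet} to the product $({}_IP)\cdot(AQ_J)$ with the inner dimension $n$ and then again to $({}_KA)\cdot Q_J$ with the inner dimension $m$, I obtain
$$\det\bigl({}_I(PAQ)_J\bigr)=\sum_{\substack{K\subset\{1,\dots,n\}\\ \#K=k}}\sum_{\substack{L\subset\{1,\dots,m\}\\ \#L=k}}\det({}_IP_K)\,\det({}_KA_L)\,\det({}_LQ_J).$$

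Each summand on the right is an integer multiple of $\det({}_KA_L)$, and $d_k(A)$ divides every $\det({}_KA_L)$ by definition. Hence $d_k(A)$ divides $\det({}_I(PAQ)_J)$ for every admissible pair $(I,J)$, and therefore $d_k(A)\mid d_k(PAQ)$.

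For the reverse divisibility I would simply apply the same estimate to the matrix $PAQ$ in place of $A$, with $P^{-1}\in\mathrm{GL}_n(\mathbb{Z})$ and $Q^{-1}\in\mathrm{GL}_m(\mathbb{Z})$: since $A = P^{-1}(PAQ)Q^{-1}$, the argument above yields $d_k(PAQ)\mid d_k(A)$. Combining the two divisibilities (and noting that $d_k$ is a non-negative integer, so divisibility in both directions forces equality) gives $d_k(PAQ)=d_k(A)$. The only place that requires care is bookkeeping of the row/column extraction through the Cauchy--Binet expansion, but once the identity ${}_I(PAQ)_J = ({}_IP)\,A\,(Q_J)$ is in place, the rest is a mechanical double application of the formula.
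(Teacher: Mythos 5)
Your proof is correct and follows essentially the same route as the paper: a double application of the Cauchy--Binet formula to $\det({}_I(PAQ)_J)$ to get $d_k(A)\mid d_k(PAQ)$, and then the symmetry $A=P^{-1}(PAQ)Q^{-1}$ for the reverse divisibility (which the paper compresses into the remark that invertibility of $P,Q$ makes one direction suffice). No gaps.
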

\begin{proof}Since $P,Q$ are invertible it is enough to show that $d_k(A)\mid d_k(PAQ)$. So let $I\subset \{1,\dots n\}, J\subset \{1,\dots m\}$ such that $\#I = k = \#J$. By Theorem \ref{thm:CauchyBinet} we have $$\det({}_IPAQ_J)
=\sum_{\substack{S\subset \{1,\dots, n\}\\ \#S = k}} \det({}_IP_S)\det({}_S AQ_J)
=\sum_{\substack{S\subset \{1,\dots, n\}\\ \#S = k}} \det({}_IP_S)\sum_{\substack{T\subset \{1,\dots, m\}\\ \#T = k}} \det({}_SA_T)\det({}_TQ _J).$$
Therefore $d_k(A)\mid \det({}_IPAQ_J)$ and hence $d_k(A)$ divides their greatest common divisor $d_k(PAQ)$.
\end{proof}

\begin{definition}For $n,m\in\mathbb{N}$ and $\boldsymbol\alpha=(\alpha_1,\dots,\alpha_{\min\{n,m\}})\in\mathbb{Z}^{\min\{n,m\}}$ we denote by $\mathrm{diag}_{n\times m}(\boldsymbol\alpha)$ the $n\times m$ matrix $D=(d_{i,j})$ where $d_{i,j}=0$ if $i\neq j$ and $d_{i,i}=\alpha_i$ for all $i=1,\dots,\min\{n,m\}$.\end{definition}
\begin{lemma}\label{lem:diDiag}Let $n,m\in\mathbb{N}, l=\min\{n,m\}, \alpha_1,\dots,\alpha_l\in\mathbb{Z}_{\ge0}$ such that $\alpha_{i+1}\mathbb Z\subset \alpha_i\mathbb Z$ for all $i=1,\dots, l-1$. Then $d_k(\mathrm{diag}_{n\times m}(\alpha_1,\dots,\alpha_l))=\alpha_1\cdots\alpha_k$ for all $k=1,\dots, l$.\end{lemma}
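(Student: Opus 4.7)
The plan is to analyze which $k \times k$ submatrices of $D := \mathrm{diag}_{n\times m}(\alpha_1, \dots, \alpha_l)$ can have a nonzero determinant. Writing $I = \{i_1 < \dots < i_k\}$ and $J = \{j_1 < \dots < j_k\}$, the entry of ${}_I D_J$ in position $(p,q)$ equals $\alpha_{i_p}$ when $i_p = j_q$ and vanishes otherwise. If some index $i_p$ lies in $I \setminus J$, then the $p$-th row of ${}_I D_J$ is identically zero, forcing $\det({}_I D_J) = 0$; symmetrically for some $j_q \in J \setminus I$ (the $q$-th column vanishes). Hence a nonzero minor forces $I = J$, in which case ${}_I D_I$ is itself diagonal with entries $\alpha_{i_1}, \dots, \alpha_{i_k}$, and thus $\det({}_I D_I) = \alpha_{i_1} \cdots \alpha_{i_k}$.

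It then follows that $d_k(D)$ is the greatest common divisor of the products $\alpha_{i_1} \cdots \alpha_{i_k}$ as $I = \{i_1 < \dots < i_k\}$ ranges over $k$-element subsets of $\{1,\dots,l\}$. The hypothesis $\alpha_{i+1}\mathbb{Z} \subset \alpha_i\mathbb{Z}$ is equivalent to $\alpha_i \mid \alpha_{i+1}$ for every $i$. Since any such $I$ satisfies $i_j \ge j$, iterating this divisibility gives $\alpha_j \mid \alpha_{i_j}$ for each $j$, and multiplying these relations yields $\alpha_1 \cdots \alpha_k \mid \alpha_{i_1} \cdots \alpha_{i_k}$. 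Therefore $\alpha_1 \cdots \alpha_k$ divides every product appearing in the gcd, hence divides $d_k(D)$. Conversely, the choice $I = \{1,\dots,k\}$ exhibits $\alpha_1 \cdots \alpha_k$ as one of the products, so $d_k(D)$ divides $\alpha_1 \cdots \alpha_k$, establishing equality.

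The only mild subtlety is the bookkeeping in the first step, where one must argue from both sides (rows indexed by $I \setminus J$ and columns indexed by $J \setminus I$) to conclude that $I \ne J$ forces a vanishing determinant. The degenerate possibility that some $\alpha_i = 0$ is automatically handled: since $\alpha_i \mid \alpha_{i+1}$, a zero value propagates to all later $\alpha_j$, so both $\alpha_1 \cdots \alpha_k$ and every minor product in the gcd vanish simultaneously, yielding $d_k(D) = 0$ on both sides. No Cauchy--Binet computation is needed here; the proof is purely combinatorial once the nonzero minors have been identified.
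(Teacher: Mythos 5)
Your proof is correct and follows essentially the same route as the paper's: identify that only the minors with $I=J$ are nonzero (via the zero row/column observation), compute those as $\prod_{i\in I}\alpha_i$, and use the divisibility chain to evaluate the gcd. You simply spell out the divisibility step ($i_j\ge j$ hence $\alpha_j\mid\alpha_{i_j}$) that the paper leaves implicit.
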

\begin{proof}Let $k\in\{1,\dots,l\}, D=\mathrm{diag}_{n\times m}(\alpha_1,\dots,\alpha_l)$ and $I\subset \{1,\dots, n\},J\subset \{1,\dots, m\}$ such that $\#I=k=\#J$. If $I\neq J$ there is a row of zeros in ${}_I D_J$ since the entries are $d_{i,j}$ where $i\in I$ and $j\in J$ and hence the determinant vanishes. If $I=J$ the determinant of ${}_ID_J$ is given by $\prod_{i\in I}\alpha_i$. Therefore we find that $d_k(D)=\gcd(\{\prod_{i\in I}\alpha_i: I\subset\{1,\dots, l\}, \#I=k\})=\alpha_1\dots\alpha_k$, where we use the divisibility properties of the $\alpha_i$.\end{proof}

\bibliographystyle{alpha}
\bibliography{fink}

\end{document}